\documentclass[11pt, reqno]{amsart}   	% use "amsart" instead of "article" for AMSLaTeX format
       		% See geometry.pdf to learn the layout options. There are lots.
               		% ... or a4paper or a5paper or ... 
		
%PACKAGES			
\usepackage{amssymb, latexsym}
\usepackage{amsmath}
\usepackage[toc,page]{appendix}
\usepackage{braket}
\usepackage{breqn}
\usepackage{caption}
\usepackage{color}
\usepackage{esint}
\usepackage[T1]{fontenc}
\usepackage{geometry}         
\usepackage{graphicx}
\usepackage{hyperref}
\usepackage{latexsym}
\usepackage{mathrsfs}
\usepackage{mathtools}
\usepackage{subcaption}
\usepackage{graphics}

%\usepackage[hyphens,spaces,obeyspaces]{url}
%\urldef{\footurl}\url{https://math.stackexchange.com/questions/2045496/number-of-occurrences-of-k-consecutive-1s-in-a-binary-string-of-length-n-conta/}
%\usepackage{xcolor}
\usepackage[dvipsnames]{xcolor} %can be used to highlight text

\usepackage{times}
\usepackage{url} %does nice formatting of urls
\usepackage{tikz}

%GEOMETRY
\geometry{letterpaper}    
%\geometry{landscape}                		% Activate for rotated page geometry

%THEOREM FORMATTING
\theoremstyle{plain}
\numberwithin{equation}{section}
\newtheorem{thm}{Theorem}[section]
\newtheorem{theorem}[thm]{Theorem}
\newtheorem{lemma}[thm]{Lemma}
\newtheorem{lem}[thm]{Lemma}

\newtheorem{example}[thm]{Example}
\newtheorem{definition}[thm]{Definition}
\newtheorem{defi}[thm]{Definition}
\newtheorem{question}[thm]{Question}

\newtheorem{remark}[thm]{Remark}

%SHORTCUTS
\def\al{\alpha}

\def\be{\beta}

\def\N{\mathbb{N}}

\def\ex{\exists}

%EQUATION/LIST SHORTCUTS

\newcommand\beq{\begin{equation}}
\newcommand\eeq{\end{equation}}
\newcommand\bea{\begin{eqnarray}}
\newcommand\eea{\end{eqnarray}}
\newcommand\bi{\begin{itemize}}
\newcommand\ei{\end{itemize}}
\newcommand\ben{\begin{enumerate}}
\newcommand\een{\end{enumerate}}

\address{Cluster Innovation Centre, University of Delhi, New Delhi, 110007}
\email{vedantbonde19@gmail.com}

\address{227 Ayres Hall, 1403 Circle Drive, Knoxville, TN, 37916}
\email{jsiktar@vols.utk.edu}

%\author{}
%\email{\textcolor{blue}{\href{mailto:vedantbonde19@gmail.com}{vedantbonde19@gmail.com}}}
%\address{Cluster Innovation Centre, University of Delhi, New Delhi, 110007}

%\author{Joshua M. Siktar}
%\email{\textcolor{blue}{\href{mailto:jsiktar@vols.utk.edu}{jsiktar@vols.utk.edu}}}
%\address{227 Ayres Hall, 1403 Circle Drive, Knoxville, TN, 37916}

% \thanks{Thank you to Polymath REU}

% \subjclass[2010]{60B10, 11B39, 11B05  (primary) 65Q30 (secondary)} %check later that these make sense as scope of paper changes

%\date{\today}

\title{On the Combinatorics of Placing Balls into Ordered Bins}
\author{Vedant Bonde, Joshua M. Siktar}

\begin{document}

\maketitle

\centerline{\bf Abstract} 

\noindent
In this paper, we use techniques of enumerative combinatorics to study the following problem: we count the number of ways to split $n$ balls into nonempty, ordered bins so that the most crowded bin has exactly $k$ balls. We find closed forms for three of the different cases that can arise: $k > \frac{n}{2}$, $k = \frac{n}{2}$, and when there exists $j < k$ such that $n = 2k + j$. As an immediate result of our proofs, we find a closed form for the number of positive integer solutions to $x_1 + x_2 + \dots + x_{\ell} = n$ with the attained maximum of $\{x_1, x_2, \dots, x_{\ell}\}$ being equal to $k$, when $n$ and $k$ have one of the aforementioned algebraic relationships to each other.  The problem is generalized to find a formula that enumerates the total number of ways without specific conditions on $n, \ell, k$. Subsequently, various additional identities and estimates related to this enumeration are proven and interpreted.

%\tableofcontents
\section{Introduction}\label{intro1}
One of the most elementary, well-known enumerative combinatorics problems asks:
\begin{question}[Balls into Bins]\label{ballsinbinselem}
How many ways can we split $n$ balls into $\ell$ nonempty ordered bins?
\end{question}
This is better known as the ``stars-and-bars" problem and frequents itself in combinatorics textbooks such as \cite{Bona, Mil, Wil}. The well-known formula that answers this question is ${{n - 1}\choose{\ell - 1}}$. However, there are many different restrictions on the contents of the bins that can increase the difficulty of the problem at hand. Here is one example:
\begin{question}[Balls into Bins with Minimum Capacity]\label{ballsinbinsmincapac}
How many ways can we split $n$ balls into $\ell$ nonempty ordered bins so that each bin has at least $t$ balls?
\end{question}
It turns out this is merely a generalization of Question \ref{ballsinbinselem} where $t$ can have a value other than $1$. The closed form for this problem is also well-known and is ${{n - (t - 1)\ell - 1}\choose{\ell - 1}}$. In this paper, we will address a question that sounds very similar to \ref{ballsinbinsmincapac} but is actually far more complicated:
\begin{question}[Balls into Bins with Maximum Capacity]\label{questionballsinbins} How many ways can we split $n$ balls into any number of nonempty ordered bins where the most crowded bin has exactly $k$ balls?
\end{question}
%A natural extension to Question \ref{questionballsinbins} could involve a variable number of balls and bins not specified in terms of number of bins or the restriction of balls in a bin. 
A natural variation of Question \ref{questionballsinbins} simply involves fixing the total number of bins used.
\begin{question}[Generalized Balls into Bins with restrictions problem]\label{gen_ballsbins}
Let $n, \ell, k \in \N^{+}$. This problem asks how many ways we can split $n$ balls into $\ell$ non-empty bins such that the most crowded bin has exactly $k$ balls.
\end{question}

%We will aim to find the formula for $M_{n,\ell,k}$ by finding sub-problems of the original problem and applying various combinatorics techniques to these sub-problems. 
%%I think this sentence is redundant

Question \ref{gen_ballsbins} aims to enumerate all the possible combinations of balls into bins with the restriction on the maximum number of balls in a bin. The condition of the most crowded bin having exactly $k$ balls still holds, but with the added fact that we are also given the exact number of non-empty bins to be filled. 

%The difference between this problem and Question \ref{questionballsinbins} is that this problem does not require the condition that the most crowded bin has exactly $k$ balls, but merely that no bin has more than $k$ balls. Another key aspect of Question \ref{gen_ballsbins} is that the bins in this problem may be empty. 

\begin{defi}[Generalized Bins restriction problem]\label{bins_restrict}
Let $n, k, \ell \in \N^{+}$. The Generalized Bins restriction problem aims to find the number of ways to split n balls into $\ell$ nonempty bins such that the maximum number of balls in each bin is at most $k$. 
\end{defi}
Throughout the paper we will let $B_{n, k}$ denote the answer to Question \ref{questionballsinbins} for chosen values of $n$ and $k$. Along with this, $M_{n, \ell, k}$ will denote the number of ways to split $n$ balls into exactly $\ell$ nonempty bins where the most crowded bin has exactly $k$ balls (the answer to Question \ref{gen_ballsbins}). We will also denote the quantity in Definition \ref{bins_restrict} as $R_{n, \ell, k}$. The Principle of Inclusion and Exclusion (P.I.E.) will be crucial for interpreting this class of problems for the following reason: if the maximum number of balls in a bin is \textit{exactly} $k$, then \textit{at least} one bin will have $k$ balls in it. 

Our approach to answering Question \ref{questionballsinbins} revolves around considering two cases separately: the case where most of the balls are in a single bin, and the case where the balls are, loosely speaking, ``more spread out." We formalize this notion of ``spread apart" with the following definition, which is utilized in many lemma and theorem statements throughout the paper.

\begin{definition}\label{dominantbindef} A configuration of $n$ balls into bins has a \textbf{dominant bin} if the most crowded bin has exactly $k$ balls, where $\frac{n}{2} < k < n$. Otherwise we say the configuration has no dominant bin.
\end{definition}

Now we proceed to survey the literature to review related problems. Binomial coefficients have surfaced in many problems of an enumerative nature, including combinatorial inequalities, lattice walks, and Stirling Numbers \cite{Al, Bor, Boy1, Boy2, Che, Don, Fah1, Fah2, Fan, Guo, Hard, Kol, Li, Mpg, Nim, Roy, Sha, Sofo, Sta, Sun}. There are also numerous applications of binomial coefficient identities to number theory and computer science. One prominent such application is that of the \textbf{Bernoulli trial}, where we flip a [weighted] coin in succession many times and track the number of consecutive heads. Long-time asymptotic behavior of Bernoulli trials is explored in \cite{Cha1, Cha2, Hol, Hua, Kon, Kum1, Kum2, Meg, Rig}. Furthermore, Lucas polynomials have played a role in calculating the number of ways to place the numbers $\{1, 2, \dots, n\}$  on a circle and find $r$ adjacent numbers on the circle where no $k$ of them are consecutive, and this problem is closely related to the aforementioned Bernoulli trial problem (see \cite{Cha1, Cha2}). The main difference with our problem is that the bins we place balls into are in a line, rather than a circular formation. 

We now state the four main results of this paper. The first three are all closed forms for $B_{n, k}$ when there is a different algebraic relationship between $n$ and $k$. The fourth one is a summation formula for $M_{n, \ell, k}$. One remarkable attribute of our proofs is that we do not utilize generating functions in any way, and instead resort to using more elementary binomial coefficient manipulations.

\begin{theorem}[Closed form: dominant bin]\label{B_{n, k}DomClusterMain} If $n, k \in \N^+$ with $\frac{n}{2} < k < n$ then
\begin{equation}\label{groupingBallsClosedEqMain}
B_{n, k} \ = \ (n - k + 3)2^{n - k - 2}. 
\end{equation}
\end{theorem}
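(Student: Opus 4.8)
The plan is to exploit the defining inequality $\frac{n}{2} < k < n$ to pin down the coarse structure of every valid configuration. Since $k > \frac{n}{2}$, any two bins each holding $k$ balls would account for $2k > n$ balls, which is impossible; hence every configuration counted by $B_{n, k}$ has \emph{exactly one} bin — the dominant bin — containing $k$ balls. Moreover the remaining $m := n - k$ balls satisfy $m < k$ (again because $k > \frac{n}{2}$), so every other nonempty bin automatically holds fewer than $k$ balls. Thus the only constraint on the non-dominant bins is that they be nonempty, and the entire maximum-capacity condition is carried by the dominant bin alone.

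First I would stratify the count by $j$, the number of non-dominant bins, which ranges from $1$ to $m$ (it cannot be $0$, since $k < n$ forces $m \geq 1$). For fixed $j$, distributing the $m$ remaining balls among $j$ nonempty ordered bins is precisely an instance of Question~\ref{ballsinbinselem}, contributing $\binom{m - 1}{j - 1}$ arrangements. The dominant bin may then be placed into any one of the $j + 1$ positions in the resulting ordered sequence of $j + 1$ bins. Multiplying and summing over $j$ yields
\begin{equation}
B_{n, k} \ = \ \sum_{j = 1}^{m} (j + 1)\binom{m - 1}{j - 1}.
\end{equation}

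To finish, I would reindex with $i = j - 1$ and set $M := m - 1 = n - k - 1$, rewriting the sum as $\sum_{i = 0}^{M}(i + 2)\binom{M}{i}$ and splitting it into $\sum_{i = 0}^{M} i\binom{M}{i} + 2\sum_{i = 0}^{M}\binom{M}{i}$. Applying the standard identities $\sum_{i = 0}^{M}\binom{M}{i} = 2^{M}$ and $\sum_{i = 0}^{M} i\binom{M}{i} = M\,2^{M - 1}$ collapses this to $2^{M - 1}(M + 4) = (n - k + 3)2^{n - k - 2}$, which is exactly \eqref{groupingBallsClosedEqMain}.

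The main obstacle is not the algebra but the combinatorial bookkeeping at the outset: I must argue rigorously that the dominant bin is unique and canonically identifiable, so that multiplying by the $j + 1$ insertion positions does not overcount. Uniqueness follows from $2k > n$, and since no other bin can reach $k$ balls, the dominant bin is unambiguously the (unique) fullest bin; hence the correspondence between configurations and pairs (position of dominant bin, composition of the remaining $m$ balls) is a bijection and each configuration arises exactly once. I would also check the boundary case $k = n - 1$ (where $M = 0$ and the formula returns $2$) directly, to confirm that the binomial identities remain valid at this extreme.
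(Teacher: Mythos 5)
Your proposal is correct and follows essentially the same route as the paper: fix the number of non-dominant bins, count compositions of the remaining $n-k$ balls by stars-and-bars, multiply by the number of insertion slots for the dominant bin, and evaluate the resulting sum $\sum (\ell+1)\binom{n-k-1}{\ell-1}$ via the identities $\sum_i \binom{M}{i} = 2^M$ and $\sum_i i\binom{M}{i} = M2^{M-1}$. Your explicit justification that the dominant bin is unique (so the insertion step does not overcount) is a point the paper leaves implicit, but the decomposition and the algebra are identical.
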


\begin{theorem}[Closed form for $B_{2k, k}$] If $k \in \N^+$ then
\begin{equation}\label{B_{2r, r}closedMain}
B_{2k, k} \ = \ (k + 3)2^{k - 2} - 1.
\end{equation}
\end{theorem}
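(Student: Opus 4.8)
The plan is to reuse the insertion argument behind Theorem \ref{B_{n, k}DomClusterMain} and then correct for the single new phenomenon that appears when $k = \frac{n}{2}$: two bins can simultaneously attain the maximum value $k$. First I would recall the structure of the dominant-bin count. Writing $s = n-k$, one singles out a distinguished bin holding exactly $k$ balls, distributes the remaining $s$ balls into $m$ further nonempty ordered bins (in $\binom{s-1}{m-1}$ ways), and inserts the distinguished bin into one of the $m+1$ available gaps; summing $(m+1)\binom{s-1}{m-1}$ over $m$ from $1$ to $s$ and using $\sum_j \binom{s-1}{j} = 2^{s-1}$ and $\sum_j j\binom{s-1}{j} = (s-1)2^{s-2}$ produces the closed form $(s+3)2^{s-2} = (n-k+3)2^{n-k-2}$.

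Setting $n = 2k$, so that $s = n-k = k$, the same sum evaluates to $(k+3)2^{k-2}$. The crucial observation is that this quantity is no longer the exact answer. For every term with $m \geq 2$, the $m$ bins receiving the remaining $k$ balls are nonempty and number at least two, so each holds at most $k-1 < k$ balls; the distinguished bin is then the unique maximum, distinct insertions produce distinct compositions, and no configuration is counted more than once. The single exceptional term is $m = 1$: here all $k$ leftover balls land in one further bin, which therefore also holds exactly $k$ balls, and the two insertion orders both yield the composition $(k,k)$. Thus $(k,k)$, the unique configuration of $2k$ balls in which two bins attain the maximum, is counted twice rather than once, while everything else is counted correctly.

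It then remains to confirm that these are the only subtleties. Since $3k > 2k$, no configuration of $2k$ balls can have three or more bins each holding $k$ balls, so the two-maximum case is confined to $(k,k)$; and every configuration with a unique maximal bin is produced exactly once. Subtracting the lone overcount gives $B_{2k,k} = (k+3)2^{k-2} - 1$. Equivalently, one can bypass Theorem \ref{B_{n, k}DomClusterMain} and argue directly, partitioning configurations into those with exactly one maximal bin, enumerated by $\sum_{m=2}^{k}(m+1)\binom{k-1}{m-1} = (k+3)2^{k-2}-2$, together with the single configuration $(k,k)$, for a total of $1 + \bigl[(k+3)2^{k-2}-2\bigr]$.

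I expect the main obstacle to be the boundary bookkeeping rather than any hard computation: one must argue carefully that the overcounting at $k = \frac{n}{2}$ is exactly the multiplicity-two appearance of $(k,k)$ and nothing more, which amounts to checking that the $m \geq 2$ terms remain free of double counting and that no further coincidences arise from the $m=1$ term. A quick sanity check at small $k$ (for instance $k=2$, where the four compositions $(2,2),(2,1,1),(1,2,1),(1,1,2)$ match $(2+3)2^{0}-1 = 4$) guards against an off-by-one error.
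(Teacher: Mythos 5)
Your proposal is correct and follows essentially the same route as the paper: the paper's Lemma computes $M_{2k,\ell,k}$ by the same ``distribute the remaining $k$ balls, then insert the distinguished $k$-bin'' argument, isolating $\ell=2$ (the configuration $(k,k)$) as the lone special case, and your second formulation $1+\bigl[(k+3)2^{k-2}-2\bigr]$ is exactly the paper's sum $1+\sum_{\ell=3}^{k+1}\ell\binom{k-1}{\ell-2}$. Your ``overcount-by-one'' framing of the same computation is a harmless repackaging, and your justification that $(k,k)$ is the only configuration with two maximal bins matches the paper's reasoning.
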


\begin{theorem}[Closed form for $B_{2k + j, k}$]\label{B_{2k+j,k}closedformMain}
Let $j, k \in \N^+$ with $k > j$. Then the number of ways to split $2k + j$ balls into nonempty bins so the most crowded bin has exactly $k$ balls is \begin{equation}\label{B_{2k+j,k}closedformMainEq}
B_{2k + j, k} \ = \ (k + j + 3)2^{k + j - 2} - (3j^2 + 19j + 18)2^{j - 4}.
\end{equation}
\end{theorem}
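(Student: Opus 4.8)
The plan is to enumerate the admissible configurations by \emph{marking} one of the bins that holds exactly $k$ balls. Because $n = 2k+j$ with $j<k$, we have $n<3k$, so no configuration counted by $B_{2k+j,k}$ can contain three bins of size $k$; hence every such configuration has exactly one or exactly two bins of size $k$. Writing $E_1$ and $E_2$ for the number of configurations with, respectively, exactly one and exactly two bins of size $k$, we have $B_{2k+j,k}=E_1+E_2$. First I would introduce the quantity $D$ counting \emph{marked} configurations, i.e.\ pairs consisting of an admissible configuration together with a choice of one of its size-$k$ bins. A configuration with exactly $i$ bins of size $k$ is counted $i$ times, so $D=E_1+2E_2$, and therefore $B_{2k+j,k}=D-E_2$. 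The strategy is thus to compute the marked count $D$ (which will produce the main term) and then to subtract the overcount $E_2$ coming from the two-bin configurations.

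To compute $D$, I would delete the marked size-$k$ bin: what remains is a composition of $n-k=k+j$ into nonempty parts, each of size at most $k$, together with a record of which gap the deleted bin occupied. Thus $D=\sum_{\sigma}\big(p(\sigma)+1\big)$, where the sum is over compositions $\sigma$ of $k+j$ with all parts at most $k$ and $p(\sigma)$ denotes the number of parts of $\sigma$. Since $k+j<2k$, a composition of $k+j$ can have at most one part exceeding $k$, so I can split off the invalid compositions and write $D=D^{\mathrm{all}}-D^{\mathrm{inv}}$, where $D^{\mathrm{all}}=\sum_{\sigma}(p(\sigma)+1)$ ranges over \emph{all} compositions of $k+j$ and $D^{\mathrm{inv}}$ is the same sum restricted to those with a part exceeding $k$. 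The quantity $D^{\mathrm{all}}$ is evaluated by the identity $\sum_{\sigma}(p(\sigma)+1)=(m+3)2^{m-2}$ (the same binomial manipulation that powers Theorem~\ref{B_{n, k}DomClusterMain}) with $m=k+j$, giving $D^{\mathrm{all}}=(k+j+3)2^{k+j-2}$, which is exactly the first term of \eqref{B_{2k+j,k}closedformMainEq}.

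It then remains to show that $D^{\mathrm{inv}}+E_2=(3j^2+19j+18)2^{j-4}$, and the key point is that both corrections depend only on $j$. For $E_2$, deleting the two size-$k$ bins leaves a composition of $j$ (all of whose parts are automatically smaller than $k$); if it has $r$ parts, there are $\binom{r+2}{2}$ ways to reinsert the two marked bins among the $r+2$ available positions, so $E_2=\sum_{r=1}^{j}\binom{j-1}{r-1}\binom{r+2}{2}$. For $D^{\mathrm{inv}}$, the unique oversized part equals $k+i$ with $1\le i\le j$, the remaining parts sum to $j-i$, and summing $(p(\sigma)+1)$ over these reduces to a sum in $j$ alone. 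I would then evaluate both sums in closed form using standard binomial identities of the type $\sum_{s}\binom{m-1}{s}(s+c)=(m-1+2c)2^{m-2}$ and add them. The main obstacle is this final bookkeeping: one must first verify that the two corrections are genuinely disjoint, which holds because a configuration cannot simultaneously possess two size-$k$ bins and an oversized bin (that would require at least $2k+(k+1)>n$ balls), and then carry out the two binomial-sum evaluations carefully enough that the $k$-dependence cancels and the stated polynomial $3j^2+19j+18$ in $j$ emerges.
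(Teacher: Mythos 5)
Your proposal is correct and is essentially the paper's own argument in different clothing: your $D^{\mathrm{all}}$ is the paper's first sum in \eqref{B_{2k+j,k}sumformbasedonbinpartition}, your $E_2$ is its $F_{2k+j,k,2}$ from Lemma \ref{B2k+j,k2kbinLem}, and your $D^{\mathrm{inv}}$ is its $\sum_{i}T_{2k+j,[k+i,k]}$ from Lemma \ref{T_{2k+j,[k+i,k]}} (including the constant $2$ for $i=j$), with the marking identity $B=D-E_2$ playing the role of the paper's inclusion--exclusion between ``at least one'' and ``at least two'' full bins. The two deferred binomial evaluations are exactly those of Lemma \ref{B2k+j,ksumevallemma}, namely $E_2=(j^2+9j+14)2^{j-4}$ and $D^{\mathrm{inv}}=(j^2+5j+2)2^{j-3}$, which indeed combine to $(3j^2+19j+18)2^{j-4}$.
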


\begin{thm}[Formula for Generalized Balls into Bins with restrictions problem]\label{generalizedBallsIntoBinsRestrictionsII}
Suppose $n, k, \ell \in \N^+$ such that $ \ell + k - 1 \leq n \leq \ell k$. Then the following identity for $M_{n, \ell, k}$ holds:

\begin{equation} \label{genBallsBinsRestEq2}
M_{n, \ell, k} \ = \ \sum^{\ell}_{t = 0} (-1)^t  {{\ell} \choose{t}}{ \left[ {{n - tk - 1}\choose{\ell - 1}} - {{n - t(k - 1) - 1}\choose{\ell - 1}} \right]}.
\end{equation}
\end{thm}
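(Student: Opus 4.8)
The plan is to reduce the ``exactly $k$'' count $M_{n,\ell,k}$ to the ``at most $k$'' count $R_{n,\ell,k}$ from Definition \ref{bins_restrict}, and then to evaluate the latter by a direct inclusion--exclusion. The key observation is that a configuration whose most crowded bin holds \emph{exactly} $k$ balls is precisely one whose bins all hold \emph{at most} $k$ balls but do not all hold at most $k-1$ balls. Since these two families are nested, this yields the clean identity
\begin{equation*}
M_{n,\ell,k} \ = \ R_{n,\ell,k} - R_{n,\ell,k-1},
\end{equation*}
so it suffices to produce a closed form for $R_{n,\ell,k}$ and substitute.

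To compute $R_{n,\ell,k}$ I would phrase the bins problem as counting positive integer solutions of $x_1 + \dots + x_\ell = n$ subject to $x_i \le k$ for every $i$. Setting $y_i = x_i - 1 \ge 0$ converts this into counting nonnegative solutions of $y_1 + \dots + y_\ell = n - \ell$ with each $y_i \le k-1$. I would then apply P.I.E. to the $\ell$ upper-bound constraints: for a fixed set $S$ of $t$ indices forced to violate their bound (i.e.\ $y_i \ge k$), the substitution $y_i \mapsto y_i - k$ reduces the target sum by $tk$, and stars-and-bars counts the remaining nonnegative solutions as $\binom{(n-\ell-tk)+\ell-1}{\ell-1} = \binom{n-tk-1}{\ell-1}$. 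Summing over $t$ with the usual alternating signs and the factor $\binom{\ell}{t}$ for the choice of $S$ gives
\begin{equation*}
R_{n,\ell,k} \ = \ \sum_{t=0}^{\ell} (-1)^t \binom{\ell}{t}\binom{n-tk-1}{\ell-1}.
\end{equation*}
Replacing $k$ by $k-1$ produces the analogous formula for $R_{n,\ell,k-1}$, and subtracting term-by-term yields exactly \eqref{genBallsBinsRestEq2}.

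The step requiring the most care is the bookkeeping in the inclusion--exclusion rather than any deep idea. First, I must confirm that the summation may run all the way to $t = \ell$: once $tk > n - \ell$ the intermediate target sum is negative and the corresponding binomial $\binom{n-tk-1}{\ell-1}$ vanishes, under the convention that $\binom{m}{\ell-1}=0$ for $m < \ell-1$, so extending the range from its effective cutoff to $\ell$ contributes only zero terms and makes the two formulas uniform for subtraction. Second, I should check that the hypotheses $\ell + k - 1 \le n \le \ell k$ delimit exactly the range in which $M_{n,\ell,k}$ can be positive---the lower bound being the minimum total forced by one bin of size $k$ together with $\ell - 1$ bins of size at least one, and the upper bound being the maximum total when every bin is full---so that the identity is stated on its natural domain. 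Beyond these two checks, the remainder is the routine algebraic verification that the two subtracted sums align index-by-index.
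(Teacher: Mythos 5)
Your proposal is correct and follows essentially the same route as the paper: the paper likewise writes $M_{n,\ell,k}$ as a difference of two ``at most'' counts (namely $R_{n-\ell,\ell,k-1}-R_{n-\ell,\ell,k-2}$, obtained after placing one ball in each bin, which is exactly your shift $y_i=x_i-1$) and evaluates each by the same inclusion--exclusion/stars-and-bars computation. The only cosmetic difference is that you re-derive the formula for the restricted count inline, whereas the paper cites it as a separately proven theorem.
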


Upon inspection of the conditions for these results, one may notice they are not exhaustive of all possible values of $n, k, \ell \in \N^+$. Aside from the trivial case where $n \leq k$, these formulas do not give a general formula for $B_{mk + j, k}$ when $m \geq 3$. We focus on the $m = 2$ case (equation \eqref{B_{2k+j,k}closedformMainEq}) because it most succinctly showcases the strategy for counting the configurations of bins when there is no dominant bin. This formula is derived from doing casework on the number of bins that contain $k$ balls, and the calculations are far more tractable when $m = 2$.

The remainder of the paper is organized as follows: in Section \ref{binDomCluster} we will prove \eqref{groupingBallsClosedEqMain}, and in Section \ref{ballsinbinsnondom} we will prove \eqref{B_{2r, r}closedMain} and \eqref{B_{2k+j,k}closedformMainEq}. In both of these sections we prove the given closed forms first by fixing the number of bins used and then later removing that restriction. Section \ref{GeneralizedBins} provides generalizations of the aforementioned cases where we can find suitable summation formulas but not closed forms. This strategy enables us to establish a connection to a well-studied counting problem for solutions to certain integer equations in Section \ref{intEqn}. The conclusion, Section \ref{conclusion}, provides some finishing remarks and possible directions for future research.  

\section{Balls into bins with a dominant bin}\label{binDomCluster}

This section will be devoted to studying the balls in bins problem (Question \ref{questionballsinbins}) where there is a dominant cluster; that is, how many ways can we sort $n$ balls into bins when one bin has more than half the total number of balls. It turns out this is the most straightforward of the three cases, for the following reason: if the most crowded bin, henceforth called the \textbf{dominant bin}, has $k$ balls, then no other bin has $k$ or more balls. This simplifies the combinatorial analysis to come.
 
The first step will be to find a formula for $M_{n, \ell, k}$ when $\frac{n}{2} < k < n$. From here we will find a summation formula for $B_{n, k}$, and then find a closed form for the sum. We quickly remark that the smallest possible number of bins to be used is $2$ (one with $k$ balls, and the other with $n - k$ balls), and the largest possible number of bins is $n - k + 1$ (one with $k$ balls, and $n - k$ bins each with $1$ ball). Recall that $B_{n, k}$ denotes the number of ways to split $n$ balls into any number of ordered nonempty bins where the most crowded bin has $k$ balls. Furthermore, $M_{n, \ell, k}$ denotes the number of ways to split $n$ balls into exactly $\ell$ nonempty bins where the largest group is of size $k$. 

\begin{lemma} Let $n, \ell, k \in \N^+$ such that $\frac{n}{2} < k < n$ and $2 \leq \ell \leq n - k + 1$. Then the number of ways to split $n$ balls into $\ell$ nonempty bins where the most crowded bin has exactly $k$ balls is
\begin{equation}\label{domClusterBinPartition}
M_{n, \ell, k} \ = \ \ell{{n - k - 1}\choose{\ell - 2}}.
\end{equation}
\end{lemma}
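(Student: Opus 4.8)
The plan is to exploit the crucial observation, already flagged in the text, that the hypothesis $\frac{n}{2} < k$ forces the dominant bin to be unique. Indeed, if two of the $\ell$ bins each contained $k$ balls, then we would need $2k \leq n$, contradicting $k > \frac{n}{2}$. Hence in any valid configuration exactly one bin holds $k$ balls and every other bin holds strictly fewer. This is what makes the ``most crowded bin has exactly $k$'' condition equivalent to ``exactly one bin has $k$ balls and all others have fewer,'' and it turns the count into a clean two-stage selection with no ambiguity about which bin is the dominant one.

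First I would choose which of the $\ell$ ordered bins is the dominant bin and fill it with $k$ balls; since the bins are ordered and distinguishable there are exactly $\ell$ such choices, and by the uniqueness just established these choices partition the configurations without overlap. Second, I would distribute the remaining $n - k$ balls among the other $\ell - 1$ bins, each of which must be nonempty. This is precisely the stars-and-bars setting of Question \ref{ballsinbinselem} for $n - k$ balls into $\ell - 1$ nonempty ordered bins, yielding ${{(n - k) - 1}\choose{(\ell - 1) - 1}} = {{n - k - 1}\choose{\ell - 2}}$ arrangements. Multiplying the two independent stages gives the claimed value $\ell{{n - k - 1}\choose{\ell - 2}}$.

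The one point requiring genuine (if brief) verification is that the secondary bins automatically satisfy the upper-capacity constraint, namely that none of them reaches $k$ balls and thereby ties the dominant bin. This is immediate from the same inequality $k > \frac{n}{2}$: the total supply left for these bins is $n - k < k$, so even piling all remaining balls into a single bin yields fewer than $k$ balls there. Consequently the stars-and-bars count in the second stage needs no correction, and every arrangement it produces is genuinely a configuration whose unique maximum is $k$. I would also note that the stated range $2 \leq \ell \leq n - k + 1$ is exactly what keeps the binomial coefficient ${{n - k - 1}\choose{\ell - 2}}$ well-defined and nonzero (equivalently $0 \leq \ell - 2 \leq n - k - 1$), so the formula remains consistent at the endpoints.

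The main obstacle here is conceptual rather than computational: one must argue carefully that the decomposition into (choice of dominant bin) $\times$ (distribution of the rest) is a genuine bijection onto the valid configurations rather than an overcount. Both the uniqueness of the dominant bin and the automatic satisfaction of the capacity bound hinge entirely on the dominance hypothesis $k > \frac{n}{2}$, which is precisely why this case is the most tractable of the three treated in the paper.
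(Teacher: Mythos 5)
Your proposal is correct and follows essentially the same two-stage argument as the paper: distribute the $n-k$ remaining balls into $\ell-1$ nonempty ordered bins via stars-and-bars, contributing ${{n-k-1}\choose{\ell-2}}$, and account for the position of the dominant bin with a factor of $\ell$, with the hypothesis $k > \frac{n}{2}$ guaranteeing both the uniqueness of the dominant bin and that no capacity correction is needed. The only cosmetic difference is that you select the dominant bin's index first while the paper inserts it into one of $\ell$ slots afterward; these are the same count.
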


\begin{proof} If the largest group is of size $k$, then our task reduces to splitting $n- k$ balls into nonempty groups of size at most $n - k$. Since $\frac{n}{2} < k$, this means $n - k < k$, and that all of the remaining groups have size smaller than $k$. In other words, we need not impose further restrictions when breaking the $n - k$ remaining balls into groups. This reasoning also illustrates that our proof does not generalize to any cases where $0 < k \leq \frac{n}{2}$.

We evaluate the given quantity by splitting the $n - k$ remaining balls into $\ell - 1$ bins. By the classical stars-and-bars argument depicted in \cite{MilWan}, this step can be done in ${{n - k  - 1} \choose {\ell - 2}}$ ways. Finally, we must insert the bin of size $k$ in between the $\ell - 1$ groups already established, and this can be done in $\ell$ ways; the bin of size $k$ can go between two of the other bins, or it can be put at either end of the line of bins. Multiplying the number of ways to perform these two steps together yields the desired result.
\end{proof}

Now we derive a closed form for $B_{n, k}$ when there is a dominant bin.

\begin{theorem}[Closed form: dominant bin]\label{domBinClosedForm} If $n, k \in \N^+$ with $\frac{n}{2} < k < n$ then
\begin{equation}\label{groupingBallsClosedEq}
B_{n, k} \ = \ (n - k + 3)2^{n - k - 2}. 
\end{equation}
\end{theorem}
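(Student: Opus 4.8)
The plan is to assemble $B_{n,k}$ by summing the per-$\ell$ count from the preceding lemma over every admissible number of bins. As noted in the remark just before the lemma, when $\frac{n}{2} < k < n$ the number of bins $\ell$ ranges exactly from $2$ to $n - k + 1$, so I would write
\begin{equation}\label{plan:sum}
B_{n,k} \ = \ \sum_{\ell = 2}^{n - k + 1} M_{n, \ell, k} \ = \ \sum_{\ell = 2}^{n - k + 1} \ell \binom{n - k - 1}{\ell - 2},
\end{equation}
substituting the closed form \eqref{domClusterBinPartition} for $M_{n,\ell,k}$. The whole task then reduces to evaluating this single finite sum in closed form.

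To make the binomial coefficients transparent, I would set $m := n - k$ (so $m \geq 1$ since $k < n$) and reindex with $j := \ell - 2$, which turns \eqref{plan:sum} into
\begin{equation}\label{plan:reindex}
B_{n,k} \ = \ \sum_{j = 0}^{m - 1} (j + 2)\binom{m - 1}{j} \ = \ \sum_{j = 0}^{m - 1} j \binom{m - 1}{j} \ + \ 2\sum_{j = 0}^{m - 1} \binom{m - 1}{j}.
\end{equation}
Each piece is now a standard weighted binomial sum. The second sum is the full row sum $\sum_{j} \binom{m-1}{j} = 2^{m-1}$, and the first is the classical identity $\sum_{j} j \binom{m-1}{j} = (m-1)2^{m-2}$ (which itself follows from $j\binom{m-1}{j} = (m-1)\binom{m-2}{j-1}$ and then collapsing the shifted row sum).

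Combining the two evaluations gives $(m-1)2^{m-2} + 2\cdot 2^{m-1} = (m-1)2^{m-2} + 4\cdot 2^{m-2} = (m+3)2^{m-2}$, and resubstituting $m = n-k$ yields precisely $(n - k + 3)2^{n - k - 2}$, which is \eqref{groupingBallsClosedEq}. I do not expect a serious obstacle here: the argument is a direct summation, and the only points requiring care are confirming the index range $2 \leq \ell \leq n-k+1$ is exhaustive (so that \eqref{plan:sum} really counts every configuration with a dominant bin), and recognizing the factor $\ell$ as producing a \emph{weighted} binomial sum rather than a plain row sum. The degenerate endpoint $m = 1$ (i.e. $k = n-1$) is worth a sanity check, where \eqref{plan:reindex} gives $B_{n,k} = 2$, matching the formula since $(1+3)2^{-1} = 2$.
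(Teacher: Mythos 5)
Your proposal is correct and follows essentially the same route as the paper's proof: both sum the per-$\ell$ formula $\ell\binom{n-k-1}{\ell-2}$ over $2 \leq \ell \leq n-k+1$, reindex in terms of $t = n-k$, split off the weighted term via the identity $\sum_j j\binom{m-1}{j} = (m-1)2^{m-2}$, and combine with the row sum $2^{m-1}$ to obtain $(m+3)2^{m-2}$. The only difference is a cosmetic choice of index shift, so there is nothing substantive to add.
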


\begin{proof} We first obtain a summation formula for $B_{n, k}$ by summing \eqref{domClusterBinPartition} over $2 \leq \ell \leq n - k + 1$:
\begin{equation}\label{groupingBallsEq1}
B_{n, k} \ = \ \sum^{n - k + 1}_{\ell = 2}\ell{{n - k - 1} \choose {\ell - 2}}.
\end{equation}
Now we can verify the proposed closed form for $B_{n, k}$. Let $t := n - k$, because we will be using $t$ as a parameter. Then
\begin{equation}\label{groupingBallsClosedEq1}
B_{n, k} \ = \ \sum^{t}_{\ell = 1}(\ell + 1){{t - 1} \choose {\ell - 1}}.
\end{equation}
It remains to evaluate the sum in \eqref{groupingBallsClosedEq1}, which fortunately is relatively easy once we realize that the sum resembles the formula \eqref{BCIdentitym=1} with different variable labels. In particular, that formula gives us
\begin{equation}\label{groupingBallsClosedEq2}
\sum^{t - 1}_{\ell = 0}\ell{{t - 1}\choose{\ell}} \ = \ (t - 1)2^{t - 2}.
\end{equation}
Shifting the index of the sum in \eqref{groupingBallsClosedEq2} and adding $2\sum^{t}_{\ell = 1}{{t - 1}\choose{\ell - 1}}$ to both sides of the equation gives
\begin{equation}\label{groupingBallsClosedEq3}
\sum^{t}_{\ell = 1}(\ell + 1){{t - 1}\choose{\ell - 1}} \ = \ (t - 1)2^{t - 2} + 2\sum^{t}_{\ell = 1}{{t - 1}\choose{\ell - 1}}.
\end{equation}
Upon shifting the index of the sum on the right-hand side of \eqref{groupingBallsClosedEq3} by $1$ we notice it equals $2^{t - 1},$ and so
\begin{equation}\label{groupingBallsClosedEq4}
B_{n, k} \ = \ (t + 3)2^{t - 2}.
\end{equation}
The desired result follows upon substituting $n  - k$ back in place of $t$.
\end{proof}

\begin{remark} This lemma does not hold for $n = k$ because the second step becomes degenerate if all of the balls are in a single group of size $k$. It is obvious that $B_{k, k} = 1$ and we will henceforth ignore this case.
\end{remark}

\begin{remark} It makes heuristic sense that this formula depends on $n - k$ but not on $n$ nor $k$ individually. This is because the first step in our proof reduces the group of balls to a group of size $n - k$.
\end{remark}

\section{Balls in bins without a dominant bin}\label{ballsinbinsnondom}

In Section \ref{binDomCluster}, our focus was on the case where most (more than half) of all balls were placed in the same bin. In this section we instead assume the sunk pool balls are more spread out, and it turns out most of the theory developed for the dominant cluster case is no longer valid. 

We will begin with a subsection devoted to the special case where $n = 2k$. This situation highlights the main distinction between the behavior of the case where there is a dominant bin and when there is not, while still having a closed form very similar to that of the dominant bin case described in Section \ref{binDomCluster}. Afterward, we will derive a closed form for $B_{2k + j, k}$ when $0 < j < k$, which will involve a combination of several sums that, while manageable with purely elementary methods, is non-trivial. The general approach in both subsections will be to derive a formula for $M_{2k + j, \ell, k}$ and use that to derive a formula for $B_{2k + j, k}$; in the first subsection $j$ will equal zero, and in the second subsection $j$ will be positive but less than $k$.

\subsection{Formula for $B_{2k, k}$}\label{B_{2k,k}}

\begin{lemma} If $k \in \N^+$ then the number of ways to split $2k$ balls into $\ell$ nonempty bins where $2 \leq \ell \leq k + 1$ and the most crowded bin has exactly $k$ balls is
\begin{equation}\label{B_{2k, k}FixedBin}
M_{2k, \ell, k} \ = \ \begin{cases}
1, \ \ \ \ \ \ \ \ \ \ \ell = 2 \\
\ell{{k - 1}\choose{\ell - 2}}, \ 3 \leq \ell \leq k + 1 
\end{cases}.
\end{equation}
\end{lemma}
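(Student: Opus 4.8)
The plan is to prove the two branches of the piecewise formula separately, since the source of the discrepancy at $\ell = 2$ is precisely that the balls left over after filling the dominant bin number $2k - k = k$, which now equals the capacity $k$ of that bin. This coincidence breaks the clean counting argument of Section~\ref{binDomCluster} in exactly one configuration, and isolating that configuration is the whole content of the lemma.

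For the base case $\ell = 2$, I would argue directly. With two ordered bins whose contents sum to $2k$ and whose maximum is required to be exactly $k$, both entries must be at most $k$; since they sum to $2k$, both must in fact equal $k$. Hence the only admissible configuration is $(k, k)$, which is a single ordered arrangement, so $M_{2k, 2, k} = 1$.

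For $3 \leq \ell \leq k + 1$, I would mirror the proof of the dominant-bin lemma \eqref{domClusterBinPartition}: designate one bin to hold $k$ balls, distribute the remaining $k$ balls among $\ell - 1$ nonempty bins in $\binom{k-1}{\ell-2}$ ways by stars-and-bars, and then insert the size-$k$ bin into one of the $\ell$ available slots along the line. The crucial verification, and the step I expect to be the main obstacle, is confirming that this procedure does not overcount: the insertion step assigns a distinguished position to the size-$k$ bin, so any configuration containing two bins of size $k$ would be tallied twice. I would rule this out by observing that two size-$k$ bins would already account for all $2k$ balls, forcing every other bin to be empty; since all $\ell - 1 \geq 2$ of the remaining bins must be nonempty, no second bin can attain size $k$. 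Thus for $\ell \geq 3$ each configuration has a unique dominant bin, the insertion count is exact, and $M_{2k, \ell, k} = \ell\binom{k-1}{\ell-2}$. This same nonemptiness observation accounts for the exceptional value at $\ell = 2$, where the naive formula $\ell\binom{k-1}{\ell-2} = 2$ double-counts the single arrangement $(k,k)$ and must be corrected to $1$.
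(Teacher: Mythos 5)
Your proposal is correct and follows essentially the same route as the paper's proof: handle $\ell = 2$ by direct inspection, and for $\ell \geq 3$ use stars-and-bars on the remaining $k$ balls into $\ell - 1$ nonempty bins followed by inserting the size-$k$ bin into one of $\ell$ slots, justified by the observation that no second bin can reach size $k$ when $\ell \geq 3$. Your added remark explaining why the naive formula double-counts the configuration $(k,k)$ at $\ell = 2$ is a nice clarification but does not change the substance of the argument.
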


\begin{proof}

In the special case that $\ell = 2$ there is only one possible configuration: two bins, each containing exactly $k$ balls. The existence of this special case is contrary to our derivation of a formula for $B_{2k + j, k}$ for $0 < j < k$ as we will need at least $3$ bins when $j > 0$. On the other hand, if there are at least three bins we cannot possibly have two bins each containing $k$ balls.

Now we assume $\ell \geq 3$. Since the largest possible number of balls in a bin is $k$, and we only have one bin with $k$ balls, each of these additional bins can have at most $k - 1$ balls. Moreover, we cannot have more than $k$ bins besides the one containing exactly $k$ balls (the ``greedy" way to do this is to put $k$ balls in the first bin and a single ball in each bin thereafter). Thus once we fix the number of bins, we want to count the number of ways to split $k$ balls between $\ell - 1$ bins. By the classical stars-and-bars argument detailed in \cite{MilWan}, this can be done in ${{k - 1}\choose{\ell - 2}}$ ways.

Finally, we must choose where to insert the bin with $k$ balls amongst the other $\ell$ bins. Since the bins are ordered we can do this in $\ell$ ways; multiplying the number of ways to perform each of these two steps yields the desired result.

\end{proof}

The closed form for $B_{2k, k}$ now follows readily.

\begin{theorem}[Closed Form for $B_{2k, k}$]\label{closedFormB_{2k, k}} If $k \in \N^+$ then
\begin{equation}\label{B_{2r, r}closed}
B_{2k, k} \ = \ (k + 3)2^{k - 2} - 1.
\end{equation}
\end{theorem}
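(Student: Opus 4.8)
The plan is to mirror the structure of the proof of Theorem \ref{domBinClosedForm}: sum the per-$\ell$ formula \eqref{B_{2k, k}FixedBin} over all admissible $\ell$, isolate the $\ell = 2$ term because it behaves differently, and then recognize the remaining sum as essentially the same expression already evaluated in the dominant-bin case. Concretely, I would write
\begin{equation}\label{B_{2k,k}summationProposal}
B_{2k, k} \ = \ \sum^{k + 1}_{\ell = 2} M_{2k, \ell, k} \ = \ 1 + \sum^{k + 1}_{\ell = 3} \ell{{k - 1}\choose{\ell - 2}},
\end{equation}
where the leading $1$ is the contribution of the $\ell = 2$ case and the sum collects the $\ell \geq 3$ terms.

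The key observation is that the sum $\sum_{\ell = 3}^{k+1} \ell \binom{k-1}{\ell-2}$ is, up to its missing initial term, identical in form to the sum $\sum_{\ell=1}^{t}(\ell+1)\binom{t-1}{\ell-1}$ that was evaluated to $(t+3)2^{t-2}$ in \eqref{groupingBallsClosedEq1}--\eqref{groupingBallsClosedEq4}, with the substitution $t = k$. Indeed, reindexing via $m = \ell - 1$ turns $\sum_{\ell=2}^{k+1}\ell\binom{k-1}{\ell-2}$ into $\sum_{m=1}^{k}(m+1)\binom{k-1}{m-1}$, which is exactly that earlier sum and therefore equals $(k+3)2^{k-2}$. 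So the second step is to note that the full sum over $\ell$ from $2$ to $k+1$ (were the $\ell=2$ term the generic $\ell\binom{k-1}{\ell-2}$) would give $(k+3)2^{k-2}$; but here the $\ell = 2$ term is $1$ rather than its generic value $2\binom{k-1}{0} = 2$. Thus $B_{2k,k}$ equals the dominant-bin value minus the discrepancy $2 - 1 = 1$, yielding $(k+3)2^{k-2} - 1$ as claimed in \eqref{B_{2r, r}closed}.

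The only genuine subtlety, and hence the main thing to get right, is this bookkeeping at $\ell = 2$: one must verify that the generic formula $\ell\binom{k-1}{\ell-2}$ evaluated at $\ell = 2$ gives $2$, that the true value from \eqref{B_{2k, k}FixedBin} is $1$, and that the earlier closed-form computation in Theorem \ref{domBinClosedForm} implicitly used the generic value at its lower endpoint. The cleanest way to present this is to add and subtract the generic $\ell = 2$ term, writing
\begin{equation}\label{B_{2k,k}addSubtractProposal}
B_{2k, k} \ = \ \sum^{k + 1}_{\ell = 2} \ell{{k - 1}\choose{\ell - 2}} - \left( 2{{k - 1}\choose{0}} - 1 \right),
\end{equation}
so that the first sum is recognized via \eqref{groupingBallsClosedEq4} as $(k+3)2^{k-2}$ and the correction term is exactly $1$. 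I would then substitute to obtain the stated closed form, noting that this differs from the dominant-bin answer \eqref{groupingBallsClosedEq} evaluated at $n - k = k$ precisely by the $-1$ that accounts for the single degenerate configuration where both bins hold exactly $k$ balls. I expect the calculation itself to be routine once the earlier identity is invoked; the care required is entirely in the treatment of that boundary term.
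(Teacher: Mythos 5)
Your proof is correct and follows essentially the same route as the paper's: both sum \eqref{B_{2k, k}FixedBin} over $2 \leq \ell \leq k+1$, treat the $\ell = 2$ term separately, and reduce the remaining sum to the identity already evaluated in the dominant-bin case via \eqref{groupingBallsClosedEq2}--\eqref{groupingBallsClosedEq4}. Your add-and-subtract bookkeeping of the generic $\ell = 2$ term (worth $2$ versus the true value $1$) is just a slightly more explicit way of organizing the same computation the paper carries out by extending the index range.
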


\begin{proof} We can easily deduce a summation formula for $B_{2k, k}$ by summing the formula \eqref{B_{2k, k}FixedBin} for all $2 \leq \ell \leq k + 1$. We obtain
\begin{equation}\label{B_{2k, k}sum}
B_{2k, k} \ = \ 1 + \sum^{k + 1}_{\ell = 3}\ell{{k - 1}\choose{\ell - 2}} \ = \ 1 + \sum^{k - 1}_{\ell = 1}(\ell + 2){{k - 1}\choose{\ell}}.
\end{equation}
The evaluation of this sum follows from \eqref{groupingBallsClosedEq2} and \eqref{groupingBallsClosedEq3}; then the desired result is immediate.\footnote{The sequence $\{B_{2k, k}\}^{\infty}_{k = 1}$ appears in the Online Encyclopedia of Integer Sequences \cite{Slo}.}
\end{proof}

\subsection{Formula for $B_{2k + j, k}$}\label{B_{2k+j,k}closedFormSection}

In this subsection we will derive a closed form for $B_{2k + j, k}$ when $k > j$. The strategy is much more involved than the one used in Section \ref{B_{2k,k}}, and proceeds as follows. We will see that a valid configuration of balls will have either one or two bins with exactly $k$ balls; thus we calculate the number of ways to split $2k + j$ balls into a fixed number of bins so that there is at least one bin with $k$ balls, and then when there are two bins with $k$ balls each. However, it is possible to have a configuration where one bin has $k$ balls and another bin has $k + i$ balls, for some $1 \leq i \leq j$; we do not want to include these configurations in our final formula because the largest bin has more than $k$ balls in this case. The process of adding and subtracting these formulas from each other will closely resemble the Principle of Inclusion and Exclusion.

Before proceeding to the lemmas and their proofs, we introduce some notation that will be used only in this subsection.
\begin{enumerate}
\item{First, $T_{2k + j, [k + i, k]}$ denotes the number of ways to sort $2k + j$ balls into nonempty bins so that one bin has $k$ balls and another has $k + i$ balls. Every time this notation is used, we will have $1 \leq i \leq j$.}
\item{If we want the aforementioned quantity where exactly $\ell$ nonempty bins are used, we denote it as $U_{2k + j, \ell, [k + i, k]}$.}
\item{Next, $F_{2k + j, k, t}$ denotes the total number of ways to split $2k + j$ balls into nonempty bins so that at least $t$ bins have exactly $k$ balls. Every time this notation is used, $t$ will equal $1$ or $2$.}
\item{Finally, if we want the aforementioned quantity where exactly $\ell$ nonempty bins are used, we denote it as $G_{2k + j, \ell, k, t}$.}
\end{enumerate}

\begin{lemma}\label{T_{2k+j,[k+i,k]}} Let $i, j, k \in \N^+$ with $k > j > i$. Then the number of ways to sort $2k + j$ balls into nonempty bins so one bin has $k$ balls and another has $k + i$ balls is
\begin{equation}\label{B2k+i,kOvercount}
T_{2k + j, [k + i, k]} \ = \ \sum^{j - i}_{\ell = 1}(\ell^2 + 3\ell + 2){{j - i - 1}\choose{\ell - 1}}.
\end{equation}
\end{lemma}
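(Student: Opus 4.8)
The plan is to exhibit a bijection between the configurations counted by $T_{2k+j,[k+i,k]}$ and a simple two-stage construction: first place the two distinguished large bins and distribute the leftover balls among smaller bins, then record the order in which all of the bins appear in the line. I would begin by noting that designating one bin of size $k+i$ and one bin of size $k$ consumes $2k+i$ of the $2k+j$ balls, leaving exactly $j-i$ balls for the remaining bins. The crucial structural observation is that the hypothesis $j<k$ forces $j-i < k \le k+i$, so every additional bin, being formed from at most $j-i$ balls, is strictly smaller than both distinguished bins. This does double duty: it shows the leftover balls may be distributed with no upper-size restriction, and it guarantees that no configuration is counted twice, since in any admissible configuration the unique bin of size $k$ must be the distinguished $k$-bin and the unique bin of size $k+i$ must be the distinguished $(k+i)$-bin.

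Next I would count by conditioning on $\ell$, the number of additional (small) bins. Distributing $j-i$ identical balls into $\ell$ nonempty ordered bins is a direct application of stars-and-bars, giving $\binom{j-i-1}{\ell-1}$ ways, which is precisely the formula answering Question \ref{ballsinbinselem}. Having produced a line of $\ell$ small bins, I then insert the two distinguished bins, which are \emph{distinguishable} because $i \ge 1$ forces their sizes $k$ and $k+i$ to differ. Inserting the $k$-bin into one of the $\ell+1$ gaps produces a line of $\ell+1$ bins, after which the $(k+i)$-bin may be inserted into any of $\ell+2$ gaps; equivalently, one selects the two ordered positions occupied by the large bins among the eventual $\ell+2$ positions. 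Either accounting gives $(\ell+1)(\ell+2) = \ell^2+3\ell+2$ placements.

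Multiplying the two counts and summing over the admissible range of $\ell$ then yields the claimed identity. The range is $1 \le \ell \le j-i$: at least one small bin must exist because $j>i$ leaves $j-i \ge 1$ leftover balls, and at most $j-i$ small bins are possible since each is nonempty. This produces $T_{2k+j,[k+i,k]} = \sum_{\ell=1}^{j-i}(\ell^2+3\ell+2)\binom{j-i-1}{\ell-1}$, matching \eqref{B2k+i,kOvercount}.

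I expect the main obstacle, and really the only place where care is required, to be verifying that the construction is a genuine bijection rather than an overcount. The insertion step would double-count if the two large bins were interchangeable, so the argument leans on $i \ge 1$ to keep them distinguishable, and it would miss configurations or admit spurious ones if a leftover bin could match a large bin's size, so it also leans on $j-i < k$. Checking that both inequalities follow from the hypothesis $k>j>i$ is the heart of the proof; once they are in place, the stars-and-bars enumeration and the gap-insertion count are routine.
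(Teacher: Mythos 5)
Your proposal is correct and follows essentially the same route as the paper's proof: distribute the leftover $j-i$ balls into $\ell$ nonempty bins by stars-and-bars, then insert the $k$-bin into one of $\ell+1$ gaps and the $(k+i)$-bin into one of $\ell+2$ gaps, and sum over $1 \le \ell \le j-i$. Your added justification that $j-i<k$ prevents any overcounting is a welcome (if implicit in the paper) refinement, but the argument is the same.
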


\begin{proof} Let $1 \leq i < j$ be arbitrary. Since one bin has $k$ balls and another bin has $k + i$ balls, we have $j - i$ balls to split between nonempty bins. Say that this number of bins is denoted by $\ell$, and then $1 \leq \ell \leq j - i$. By the classical stars-and-bars method, $j - i$ balls can be split into $\ell$ nonempty bins in ${{j - i - 1}\choose{\ell - 1}}$ ways. Now, we insert the remaining two bins in between these $\ell$ bins. There are $\ell + 1$ slots in which to insert the bin with $k$ balls, and then there are $\ell + 2$ slots to insert the bin with $k + i$ balls (these last two bins are distinguishable since $i > 0$). So, there are a total of $(\ell^2 + 3\ell + 2){{j - i - 1}\choose{\ell - 1}}$ arrangements of these bins. Summing over all possible values of $\ell$ yields the desired result.
\end{proof}

\begin{remark} The other quantity related to those calculated in Lemma \ref{T_{2k+j,[k+i,k]}} that we will need is $T_{2k + j, [k + j, k]}$. However, it is clear that
\begin{equation}\label{B2k+j,k,k+j}
T_{2k + j, [k + j, k]} \ = \ 2,
\end{equation}
because the only valid configurations are those with a bin having $k$ balls and the other bin having $k + j$ balls, in either order; there must be exactly two bins.

\end{remark}

\begin{lemma}\label{B2k+j,k2kbinLem} Let $j, k \in \N^+$ with $k > j$. Then the total number of ways to split $2k + j$ balls into nonempty bins so that at least two bins have exactly $k$ balls is
\begin{equation}\label{B2k+j,k2kbin}
F_{2k + j, k, 2} \ = \ \sum^{j}_{\ell = 1}\frac{\ell^2 + 3\ell + 2}{2}{{j - 1}\choose{\ell - 1}}.
\end{equation}
\end{lemma}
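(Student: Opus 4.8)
The plan is to mirror the structure of the proof of Lemma~\ref{T_{2k+j,[k+i,k]}}: first reduce the phrase ``at least two bins'' to ``exactly two bins,'' then distribute the leftover balls by stars-and-bars, and finally insert the two size-$k$ bins into the resulting line of bins. The only genuine new wrinkle compared with Lemma~\ref{T_{2k+j,[k+i,k]}} is that the two inserted bins are now identical.

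First I would exploit the hypothesis $k > j$ to show that ``at least two'' collapses to ``exactly two.'' Three bins each holding $k$ balls would require $3k$ balls, but only $2k + j < 3k$ balls are available since $j < k$. Hence every admissible configuration has precisely two bins of size $k$, and these account for $2k$ of the balls. After reserving these two bins, $j$ balls remain, to be placed into some number $\ell$ of additional nonempty bins; since each is nonempty and only $j$ balls are left, we have $1 \leq \ell \leq j$. By the classical stars-and-bars argument of \cite{MilWan}, these $j$ balls can be split into $\ell$ ordered nonempty bins in $\binom{j-1}{\ell-1}$ ways, producing a line of $\ell$ bins with prescribed contents.

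It remains to insert the two size-$k$ bins into this line, and this is the step where the argument must depart from Lemma~\ref{T_{2k+j,[k+i,k]}}. There the two inserted bins had distinct sizes $k$ and $k+i$ with $i>0$, so they were distinguishable and contributed the full factor $(\ell+1)(\ell+2)$. Here both inserted bins contain exactly $k$ balls and are therefore indistinguishable. The cleanest way to handle this is to reframe the insertion: among the $\ell + 2$ positions in the final line, simply choose the two that hold the size-$k$ bins, giving $\binom{\ell+2}{2} = \frac{(\ell+1)(\ell+2)}{2} = \frac{\ell^2 + 3\ell + 2}{2}$ arrangements. Multiplying by $\binom{j-1}{\ell-1}$ and summing over $1 \leq \ell \leq j$ then yields \eqref{B2k+j,k2kbin}.

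The main obstacle will be justifying the factor of $\tfrac{1}{2}$ without error. If one follows the sequential ``slot-counting'' style of Lemma~\ref{T_{2k+j,[k+i,k]}}, one gets $(\ell+1)(\ell+2)$ and must argue that this overcounts each configuration exactly twice (once for each labeling of the two identical bins) with no exceptional coincidences. Recasting the insertion as the single binomial choice $\binom{\ell+2}{2}$ sidesteps this bookkeeping entirely, so I would adopt that formulation and merely remark on its equivalence to the slot-based count used earlier.
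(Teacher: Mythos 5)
Your proposal is correct and follows essentially the same route as the paper's proof: reduce ``at least two'' to ``exactly two'' via $k > j$, distribute the leftover $j$ balls into $\ell$ nonempty bins by stars-and-bars, and count the insertions of the two size-$k$ bins. The only difference is cosmetic: the paper obtains the factor $\frac{\ell^2+3\ell+2}{2}$ by casework on whether the two bins land in the same slot or different slots ($(\ell+1) + \frac{\ell(\ell+1)}{2}$), whereas you get it in one step as $\binom{\ell+2}{2}$, which is arguably the cleaner formulation.
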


\begin{proof} First notice that we cannot have more than two bins with $k$ balls each while the total number of balls is $2k + j$, because $k > j$. Thus the quantity we want to calculate is the total number of ways to split $2k + j$ balls into nonempty bins so that exactly two bins have exactly $k$ balls. In this case, the remaining $j$ balls can be split into $\ell$ nonempty bins, where $1 \leq \ell \leq j.$ By the classical stars-and-bars method, this step can be done in ${{j - 1}\choose{\ell - 1}}$ ways. Now we insert the two bins with $k$ balls in between the other bins, and there are $\ell + 1$ slots in which to do this. If both bins with $k$ balls are in the same slot, there are $\ell + 1$ ways to do this, and if the two bins with $k$ balls are in different slots, there are $\frac{\ell(\ell + 1)}{2}$ ways to do this. Thus the total number of ways to insert the two bins with $k$ balls is
\begin{equation}\label{B2k+j,k2kbinEq1}
\ell + 1 + \frac{\ell(\ell + 1)}{2} \ = \ \frac{\ell^2 + 3\ell + 2}{2}.
\end{equation}
Since the value of $\ell$ ranges from $1$ to $j$, the desired result follows.
\end{proof}
The main difference between Lemma \ref{B2k+j,k2kbinLem} and Lemma \ref{B2k+j,k1kbinLem} is that only the former precludes the possibility of having a bin with more than $k$ balls. If two of the bins each have at least $k$ balls, and there are a total of $2k + j$ balls, then no other bin can have $k$ or more balls because $k > j$. 
\begin{lemma}\label{B2k+j,k1kbinLem} Let $j, k \in \N^+$ with $k > j$. Then the total number of ways to split $2k + j$ balls into nonempty bins so that at least one bin has exactly $k$ balls is
\begin{equation}\label{B2k+j,k1kbin}
F_{2k + j, k, 1} \ = \ \sum^{k + j}_{\ell = 1}(\ell + 1){{k + j - 1}\choose{\ell - 1}} - \sum^{j}_{\ell = 1}\frac{\ell^2 + 3\ell + 2}{2}{{j - 1}\choose{\ell - 1}}.
\end{equation}
\end{lemma}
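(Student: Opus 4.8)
Looking at the statement, I need to count the number of ways to split $2k+j$ balls into nonempty ordered bins so that at least one bin has exactly $k$ balls. The target formula has two terms: a positive sum $\sum_{\ell=1}^{k+j}(\ell+1)\binom{k+j-1}{\ell-1}$ and the subtraction of exactly $F_{2k+j,k,2}$ from Lemma 4.11. Let me think about what each piece counts...
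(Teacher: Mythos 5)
Your proposal stops before the proof begins: you have correctly observed that the second term in \eqref{B2k+j,k1kbin} is exactly $F_{2k+j,k,2}$ from the preceding lemma, but you have not supplied any of the three counting arguments the statement actually requires. First, you need to derive the positive sum: fix one bin to contain $k$ balls, split the remaining $k+j$ balls into $\ell$ nonempty bins for $1 \leq \ell \leq k+j$ (giving $\binom{k+j-1}{\ell-1}$ by stars-and-bars), and then insert the distinguished $k$-bin into one of the $\ell+1$ slots among the other bins; summing over $\ell$ produces $\sum_{\ell=1}^{k+j}(\ell+1)\binom{k+j-1}{\ell-1}$. Second, you must justify the correction term: a configuration with two bins of exactly $k$ balls is produced twice by this procedure (once for each choice of which $k$-bin is the "fixed" one, and those two bins are indistinguishable), so it is counted with multiplicity $2$ when it should be counted once; subtracting $F_{2k+j,k,2}$ exactly once repairs this. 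Third — and this is the step most easily overlooked — you must observe that because $k > j$, no configuration of $2k+j$ balls can have three or more bins with $k$ balls each, so the inclusion--exclusion terminates after the pairwise term and no further corrections are needed. Without these three pieces the identity is only a guess about the shape of the answer, not a proof.
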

\begin{proof} Much as in the proof of \eqref{B2k+j,k2kbin}, there will be either one or two bins with exactly $k$ balls. We will find the number of ways in which to split $2k + j$ balls into nonempty bins so that at least one bin has exactly $k$ balls, that happens to count the configurations having two bins with $k$ balls twice. Deliberately allowing for this over-counting gives us an easier way to count all of the configurations with exactly one bin with $k$ balls; we will then subtract the over-counted amount, which is represented by the formula \eqref{B2k+j,k2kbin}.
Fix one bin to have $k$ balls, and suppose we are splitting the remaining $k + j$ balls into $\ell$ nonempty bins, where $1 \leq \ell \leq k + j$. We can split $k + j$ balls into $\ell$ nonempty bins in ${{k + j - 1}\choose{\ell - 1}}$ ways (by the classical stars-and-bars argument). Then, we insert the bin with $k$ balls in between the other $\ell$ bins, which can be done in $\ell + 1$ ways. Finally, sum over the possible values of $\ell$ and we obtain the first sum in \eqref{B2k+j,k1kbin}.

However, we have double-counted configurations that have two bins with exactly $k$ balls each. This is because when we insert the bin with $k$ balls, there is the possibility of having one other bin with $k$ balls already, and these two bins are not distinguishable (this reasoning is actually an implicit use of the Inclusion-Exclusion Principle). Hence we subtract the formula \eqref{B2k+j,k2kbin} and obtain the desired result.
\end{proof}

In Lemma \ref{B2k+j,k1kbinLem}, the argument is only valid because the maximum number of bins with exactly $k$ balls is two. The resulting summations become considerably more complicated if three or more bins can all have $k$ balls apiece. 

\begin{lemma}\label{k_k+i_fixedBinCount}Let $i, j, k, \ell \in \N^+$ with $i < j < k$ and $3 \leq \ell \leq j - i + 2$. Then the number of ways to split $2k + j$ balls into $\ell$ nonempty bins so one bin has $k$ balls and another has $k + i$ balls is
\begin{equation}\label{B2k+i,kOvercountFixedBins}
U_{2k + j, \ell, [k + i, k]} \ = \ (\ell^2 - \ell){{j - i - 1}\choose{\ell - 3}}.
\end{equation}
\end{lemma}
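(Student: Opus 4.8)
The plan is to reduce this to the same two-step count already carried out in Lemma~\ref{T_{2k+j,[k+i,k]}}, but now with the \emph{total} number of bins $\ell$ held fixed rather than summed over. First I would observe that designating one bin to hold $k$ balls and another to hold $k+i$ balls consumes $2k+i$ of the balls, leaving exactly $(2k+j)-(2k+i) = j-i$ balls to be distributed among the remaining $\ell-2$ bins. Since each of those bins must be nonempty, this forces $1 \leq \ell-2 \leq j-i$, which is precisely the hypothesis $3 \leq \ell \leq j-i+2$ (note $j-i \geq 1$ because $i < j$).

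The first step is to distribute the $j-i$ leftover balls into the $\ell-2$ ordinary bins. By the classical stars-and-bars argument (as in \cite{MilWan}), this can be done in ${{j-i-1}\choose{\ell-3}}$ ways. Before invoking this I would verify the consistency condition that none of the ordinary bins can accidentally hold $k$ or $k+i$ balls: since $j-i < k$, every ordinary bin contains strictly fewer than $k$ balls, so the two designated bins really are the unique bins of their respective sizes. Moreover, because $i \geq 1$, the $k$-bin and the $(k+i)$-bin have distinct sizes and are therefore distinguishable.

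The second step is to insert the two special bins into the line of ordinary bins. With $\ell-2$ ordinary bins already arranged there are $\ell-1$ available gaps for the $k$-bin; after placing it we have $\ell-1$ bins and hence $\ell$ gaps for the $(k+i)$-bin. The insertion thus contributes a factor of $(\ell-1)\ell = \ell^2 - \ell$, and multiplying by the stars-and-bars count yields $(\ell^2-\ell){{j-i-1}\choose{\ell-3}}$, as claimed.

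The only place an error could realistically creep in — and hence the step I would argue most carefully — is confirming that this sequential-insertion scheme is a genuine bijection onto the ordered configurations, neither double-counting nor omitting any. I would establish this by exhibiting the inverse map: in any valid configuration the $(k+i)$-bin is the unique bin of its size and can be deleted unambiguously (recording its slot among the $\ell$ positions), after which the $k$-bin is likewise the unique remaining bin of size $k$ and can be deleted (recording its slot among the remaining $\ell-1$ positions), leaving the ordinary arrangement. This recovers all three choices uniquely and pins down the factor $\ell^2-\ell$ exactly. As a sanity check, reindexing by the number of ordinary bins $m = \ell-2$ turns the formula into $(m^2+3m+2){{j-i-1}\choose{m-1}}$, so summing over $3 \leq \ell \leq j-i+2$ reproduces the closed summation of Lemma~\ref{T_{2k+j,[k+i,k]}}.
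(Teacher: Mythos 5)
Your proof is correct and follows essentially the same two-step argument as the paper: stars-and-bars on the remaining $j-i$ balls in $\ell-2$ nonempty bins, followed by sequential insertion of the two distinguishable special bins contributing the factor $(\ell-1)\ell$. The additional checks you supply (that no ordinary bin can reach size $k$ since $j-i<k$, and the explicit inverse map justifying the insertion count) only make the argument more airtight than the paper's version.
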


\begin{proof} 
Let $1 \leq i < j$ be arbitrary. Since one bin has $k$ balls and another bin has $k + i$ balls, we have $j - i$ balls to split between $\ell - 2$ nonempty bins. The range of possible values of $\ell$ goes from $3$ to $j - i + 2$, because one bin has $k$ balls and another has $k + i$ balls. However, we still have $j - i$ balls left over, which can be split between as few as one additional bin and as many as $j - i$ additional bins.

By the classical stars-and-bars method of \cite{MilWan}, $j - i$ balls can be split into $\ell - 2$ nonempty bins in ${{j - i - 1}\choose{\ell - 3}}$ ways. Now, we insert the remaining two bins in between these $\ell$ bins. There are $\ell - 1$ slots to insert the bin with $k$ balls, and then there are $\ell$ slots to insert the bin with $k + i$ balls (these last two bins are distinguishable since $i > 0$). So, there are a total of $(\ell^2 - \ell){{j - i - 1}\choose{\ell - 3}}$ arrangements of these bins, as desired.
\end{proof}

\begin{remark}
As a slight extension of the result of Lemma \ref{k_k+i_fixedBinCount}, we consider how to split $2k + j$ balls into exactly $\ell = 2$ nonempty bins when one bin has exactly $k$ balls. In this case, the other bin must have $k + j$ balls. Thus we cannot split $2k + j$ balls into exactly $2$ nonempty bins when the most crowded bin has exactly $k$ balls.
\end{remark}

\begin{lemma}\label{B2k+j,k2kbinLemFixedBins} Let $j, k, \ell \in \N^+$ with $j < k$ and $3 \leq \ell \leq j + 2$. Then the total number of ways to split $2k + j$ balls into $\ell$ nonempty bins so that at least two bins have exactly $k$ balls is
\begin{equation}\label{B2k+j,k2kbinFixedBins}
G_{2k + j, \ell, k, 2} \ = \ \frac{\ell^2 - \ell}{2}{{j - 1}\choose{\ell - 3}}.
\end{equation}
\end{lemma}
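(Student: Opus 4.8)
The plan is to mirror the argument for the unrestricted count in Lemma \ref{B2k+j,k2kbinLem}, but now tracking the total number of bins $\ell$ rather than summing over it. The first observation I would make is that the hypothesis $k > j$ forces ``at least two'' bins with exactly $k$ balls to mean ``exactly two'': if three bins each held $k$ balls we would need $3k \leq 2k + j$, hence $k \leq j$, a contradiction. So every admissible configuration has precisely two bins of size $k$, and the remaining $j$ balls occupy the other $\ell - 2$ bins.

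Next I would distribute these leftover balls. Since the remaining $\ell - 2$ bins must each be nonempty, the classical stars-and-bars method splits $j$ balls into $\ell - 2$ nonempty bins in ${{j-1}\choose{\ell-3}}$ ways. This step also explains the stated range $3 \le \ell \le j+2$: we need at least one leftover bin ($\ell - 2 \ge 1$) and at most $j$ of them ($\ell - 2 \le j$), since a nonempty bin requires at least one ball.

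The remaining task is to insert the two size-$k$ bins among the $\ell - 2$ already-placed bins, which create $\ell - 1$ gaps. Because the two size-$k$ bins are indistinguishable --- they hold the same number of balls --- I would count in two cases, exactly as in Lemma \ref{B2k+j,k2kbinLem}: placing both in the same gap contributes $\ell - 1$ possibilities, while placing them in two distinct gaps contributes ${{\ell - 1}\choose{2}}$ possibilities. Summing gives
\begin{equation*}
(\ell - 1) + {{\ell - 1}\choose{2}} = \frac{2(\ell-1) + (\ell-1)(\ell-2)}{2} = \frac{\ell^2 - \ell}{2}.
\end{equation*}
Multiplying the insertion count by the stars-and-bars count then yields the claimed formula.

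The only genuinely delicate point --- and the one I expect to be the main obstacle if stated carelessly --- is the indistinguishability of the two size-$k$ bins, which forbids simply multiplying two slot counts as one would for distinguishable objects (contrast the factor $\ell^2 - \ell$ in Lemma \ref{k_k+i_fixedBinCount}, where the two inserted bins differ in size and are therefore distinguishable). Handling this via the same-gap/different-gap split is precisely what produces the factor of $\frac{1}{2}$ in $\frac{\ell^2 - \ell}{2}$, and it is the sole substantive difference from the distinguishable-bin computations elsewhere in this subsection.
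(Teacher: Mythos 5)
Your proposal is correct and follows essentially the same route as the paper's proof: reduce ``at least two'' to ``exactly two'' using $k > j$, apply stars-and-bars to place the remaining $j$ balls into $\ell - 2$ nonempty bins, and count insertions of the two indistinguishable size-$k$ bins into the $\ell - 1$ gaps via the same-gap/different-gap split. Your explicit remark on why the indistinguishability forces the factor of $\tfrac{1}{2}$ is a nice clarification but not a different argument.
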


\begin{proof} 
First notice that we cannot have more than two bins with $k$ balls each while the total number of balls is $2k + j$, because $k > j$. Thus the quantity we want to calculate is the total number of ways to split $2k + j$ balls into $\ell$ nonempty bins so that exactly two bins have exactly $k$ balls. In this case, the remaining $j$ balls can be split into $\ell - 2$ nonempty bins, and now it is clear the total number of bins must range from $3$ to $j + 2$.

By the classical stars-and-bars method used in \cite{MilWan}, this step can be done in ${{j - 1}\choose{\ell - 3}}$ ways. Now we insert the two bins with $k$ balls in between the other bins, and there are $\ell - 1$ slots in which to place the bins with exactly $k$ balls. If both bins with $k$ balls are in the same slot, there are $\ell - 1$ ways to do this, and if the two bins with $k$ balls are in different slots, there are $\frac{(\ell - 2)(\ell - 1)}{2}$ ways to do this. Thus the total number of ways to insert the two bins with $k$ balls is
\begin{equation}\label{B2k+j,k2kbinEq1A}
\ell - 1 + \frac{(\ell - 2)(\ell - 1)}{2} \ = \ \frac{\ell^2 - \ell}{2}.
\end{equation}
Multiplying the number of ways to perform the two steps yields the desired result.
\end{proof}

In this lemma we finally combine the previous lemmas in this subsection to obtain a closed form for $M_{2k + j, \ell, k}$.

\begin{lemma}\label{split_B_{2k+j,k}_fixedBins} Let $j, k \in \N^+$ with $j < k$ and $2 \leq \ell \leq k + j + 1$. Then the total number of ways to split $2k + j$ balls into $\ell$ nonempty bins so the most crowded bin has exactly $k$ balls is
\begin{multline}\label{totalFixedBin}
M_{2k + j, \ell, k} \ = \ \begin{cases}
0, \ \ \ \ \ \ \ \ \ \ \ \ \ \ \ \ \ \ \ \ \ \ \ \ \ \ \ \ \ \ \ \ \ \ \ \ \ \ \ \ \ \ \ \ \ \ \ \ \ \ \ \ \ \ \ \ \ \ \ \ \ \ \ \ \ell = 2 \\
\ell{{k + j - 1}\choose{\ell - 2}} - \frac{\ell^2 - \ell}{2}{{j - 1}\choose{\ell - 3}} - \sum^{s}_{i = 1}(\ell^2 - \ell){{j - i - 1}\choose{\ell - 3}}, \quad \\ \ \ \ \ \ \ \ \ \ \ \ \ \ \ \ \ \ \ \ \ \ \ \ \ \ \ \ \ \ \ \ \ \ \ \ \ \ \ \ \ \ \ \ \ \ \ \ \ \ \ \ \ \ \ \ \ \ \ \ \ \ \ \ \ \ \ \ \ \ell = j + 2 - s, 1 \leq s < j \\
\ell{{k + j - 1}\choose{\ell - 2}} - \frac{\ell^2 - \ell}{2}{{j - 1}\choose{\ell - 3}}, \ \  \ \ \ \ \ \ \ \ \ \ \ \ \ \ \  \  \ \ \ \ \ \ \ \ \ \ \ \ell = j + 2 \\
\ell{{k + j - 1}\choose{\ell - 2}} , \ \ \ \ \ \ \ \ \ \  \ \ \ \ \ \ \ \ \ \ \ \ \ \ \ \ \ \ \ \ \ \ \ \ \ \ \ \ \ \ \  \ \ \ \ \ \ \ \ \ \ j + 3 \leq \ell \leq k + j + 1 
\end{cases}.
\end{multline}
\end{lemma}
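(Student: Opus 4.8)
The plan is to reproduce, at the level of a fixed number of bins $\ell$, the inclusion--exclusion argument used to compute $F_{2k+j,k,1}$ in Lemma \ref{B2k+j,k1kbinLem}. The starting point is the \emph{overcount} obtained by first designating one bin to hold exactly $k$ balls. Concretely, for $\ell \geq 3$ I would fix a distinguished $k$-bin, distribute the remaining $k+j$ balls among $\ell-1$ nonempty bins (which by stars-and-bars can be done in ${{k+j-1}\choose{\ell-2}}$ ways), and then slot the $k$-bin into one of the $\ell$ available gaps, giving the quantity $\ell{{k+j-1}\choose{\ell-2}}$. The key observation is that this counts each valid configuration once for every bin of size exactly $k$ it contains; equivalently, it enumerates (configuration, distinguished $k$-bin) pairs.

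First I would classify, using the hypothesis $k>j$, all configurations with at least one bin of size $k$ that this overcount sees. Because $3k>2k+j$, no configuration can have three bins of size at least $k$; since every counted configuration already contains a bin of size exactly $k$, the only possible profiles are (i) exactly one $k$-bin with every other bin smaller than $k$, (ii) exactly two $k$-bins, and (iii) exactly one $k$-bin together with one bin of size $k+i$ for some $1 \le i \le j-1$ (two bins exceeding $k$ would force a third bin of size $k$, hence three large bins, which is impossible). Profiles (i) and (ii) are precisely the configurations whose maximum equals $k$ --- the ones $M_{2k+j,\ell,k}$ should count --- while profile (iii) has maximum $k+i>k$ and must be discarded. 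The multiplicities are then transparent: profile (i) is counted once, profile (ii) twice (either $k$-bin may be the distinguished one), and profile (iii) once (its $k$-bin is unique).

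Subtracting the excess yields the identity. I would remove one copy of each two-$k$-bin configuration via $G_{2k+j,\ell,k,2}=\tfrac{\ell^2-\ell}{2}{{j-1}\choose{\ell-3}}$ from Lemma \ref{B2k+j,k2kbinLemFixedBins}, and remove each profile-(iii) configuration via $U_{2k+j,\ell,[k+i,k]}=(\ell^2-\ell){{j-i-1}\choose{\ell-3}}$ from Lemma \ref{k_k+i_fixedBinCount}, summed over $i$. The admissible range of $i$ is dictated by the requirement that the leftover $j-i$ balls fill $\ell-2$ nonempty bins, i.e.\ $1 \le i \le j-\ell+2$; writing $s=j+2-\ell$ makes the upper limit equal to $s$, which is exactly the index appearing in the statement. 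The four cases of the piecewise formula are then mere bookkeeping of when the correction terms vanish: for $3 \le \ell \le j+1$ (so $1 \le s < j$) both corrections are present; for $\ell=j+2$ (so $s=0$) the profile-(iii) sum is empty; and for $j+3 \le \ell \le k+j+1$ the binomial coefficients ${{j-1}\choose{\ell-3}}$ and all ${{j-i-1}\choose{\ell-3}}$ vanish, leaving only $\ell{{k+j-1}\choose{\ell-2}}$.

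The degenerate case $\ell=2$ must be handled separately, and this is also where I expect the only genuine subtlety. With two bins and $2k+j$ balls, a size-$k$ bin forces the partner bin to hold $k+j>k$ balls, so the maximum necessarily exceeds $k$ and $M_{2k+j,2,k}=0$. This case escapes the general mechanism because the corresponding overcount correction is $T_{2k+j,[k+j,k]}=2$ (the $i=j$, $\ell=2$ instance deliberately excluded from Lemma \ref{k_k+i_fixedBinCount}) rather than a term of the form $U_{2k+j,2,[k+i,k]}$; indeed, naively substituting $\ell=2$ into the general expression returns $2$ instead of $0$. The main obstacle overall is therefore not any computation but justifying the exhaustive profile classification and the exact multiplicities --- in particular that $k>j$ forbids three large bins and forces every profile-(iii) configuration to have a unique $k$-bin --- since the validity of each subtraction rests entirely on these counts.
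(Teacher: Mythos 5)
Your proposal is correct and follows essentially the same route as the paper's proof: start from the overcount $\ell\binom{k+j-1}{\ell-2}$ obtained by distinguishing a bin of exactly $k$ balls, then subtract the two-$k$-bin configurations via Lemma \ref{B2k+j,k2kbinLemFixedBins} and the configurations containing a bin of $k+i$ balls via Lemma \ref{k_k+i_fixedBinCount}, with the case split governed by when these correction terms vanish. Your explicit multiplicity bookkeeping (the three profiles counted once, twice, and once respectively) is in fact slightly more careful than the paper's own exposition, which leaves the presence of the $\frac{\ell^2-\ell}{2}\binom{j-1}{\ell-3}$ term in the second case implicit.
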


\begin{proof} We will handle each case separately. If $\ell = 2$ and one bin has exactly $k$ balls, then as discussed in the proof of Lemma \ref{B2k+j,k2kbinLemFixedBins}, the other bin has $k + j$ balls, so this case gives an answer of zero.

Now we will discuss the case where $j + 3 \leq \ell \leq k + j + 1$. As stated in \eqref{B2k+j,k1kbin}, the number of ways to split $2k + j$ balls into $\ell$ nonempty bins so at least one bin has exactly $k$ balls is $\ell{{k + j - 1}\choose{\ell - 2}}$. We do not want to include configurations containing a bin with $k + i$ balls for some $1 \leq i \leq j$ but there are no such configurations when $\ell$ is at least $j + 3$ because we cannot split $j$ balls into more than $j$ nonempty bins. 

Handing the case of $3 \leq \ell \leq j + 1$ (the second case) is more subtle. Just as in the $j + 3 \leq \ell \leq k + j + 1$ case, the number of ways to split $2k + j$ balls into $\ell$ nonempty bins so at least one bin has exactly $k$ balls is $\ell{{k + j - 1}\choose{\ell - 2}}$, but there are now some configurations where one bin has $k + i$ balls for some $1 \leq i \leq j$. There will exist $1 \leq s < j$ such that $\ell = j + 2 - s$. The value of $s$ imposes a further restriction on $i$, and we will determine what term to subtract from $\ell{{k + j - 1}\choose{\ell - 2}}$ by repeatedly using Lemma \ref{k_k+i_fixedBinCount}. It is possible to have a configuration with both a bin having $k$ balls and a bin having $k + i$ balls for any $1 \leq i \leq s$ so we must subtract \eqref{B2k+i,kOvercountFixedBins} for each $1 \leq i \leq s$, proving the proposed formula in this case. Notice that since $j < k$ we cannot have multiple bins each having more than $k$ balls and a bin having exactly $k$ balls. This is why our procedure is only effective for studying $B_{2k + j, k}$ and not $B_{mk + j, k}$ for $m \geq 3$.

Finally, the case of $\ell = j + 2$ serves as an outlier. There are no configurations having both a bin with $k$ balls and a bin with $k + i$ balls for some $1 \leq i < j$, but according to \eqref{B2k+j,k2kbinFixedBins} we must subtract $\frac{\ell^2 - \ell}{2}{{j - 1}\choose{\ell - 3}}$ to get the desired result in this case.
\end{proof}

The formula \eqref{totalFixedBin} gives us a natural partition for the number of ways to split $2k + j$ balls into nonempty bins where the most crowded bin has $k$ balls: we partition on the total number of bins used.

\begin{lemma}[Summation Formula for $B_{2k + j, k}$]\label{B_{2k+j,k}sumformbasedonbinpartitionlemma} Let $j, k \in \N^+$ with $k > j$. Then the total number of ways to split $2k + j$ balls into nonempty bins so the most crowded bin has exactly $k$ balls is
\begin{equation}\label{B_{2k+j,k}sumformbasedonbinpartition}
B_{2k + j, k} = \sum^{k + j + 1}_{\ell = 2}\ell{{k + j - 1}\choose{\ell - 2}} - \sum^{j + 2}_{\ell = 3}\frac{\ell^2 - \ell}{2}{{j - 1}\choose{\ell - 3}} -\sum^{j - 1}_{i = 1}\sum^{j - i + 2}_{\ell = 3}(\ell^2 - \ell){{j - i - 1}\choose{\ell - 3}} - 2.
\end{equation}
\end{lemma}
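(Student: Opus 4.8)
The plan is to obtain $B_{2k+j,k}$ by partitioning on the total number of bins used and summing the piecewise formula for $M_{2k+j,\ell,k}$ supplied by Lemma \ref{split_B_{2k+j,k}_fixedBins}. Since one bin must hold $k$ balls while the remaining $k+j$ balls occupy anywhere from a single additional bin (so $\ell=2$) up to $k+j$ singleton bins (so $\ell=k+j+1$), the admissible range is $2\le\ell\le k+j+1$, and hence
\begin{equation*}
B_{2k+j,k}=\sum_{\ell=2}^{k+j+1}M_{2k+j,\ell,k}.
\end{equation*}
First I would substitute the four cases of \eqref{totalFixedBin} into this sum and then regroup the resulting terms according to their structural form, matching each group against one of the three sums in \eqref{B_{2k+j,k}sumformbasedonbinpartition}.

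The leading term $\ell{{k+j-1}\choose{\ell-2}}$ appears in every case with $\ell\ge3$, whereas the $\ell=2$ case contributes $0$. To express the aggregate of these leading terms as the clean sum $\sum_{\ell=2}^{k+j+1}\ell{{k+j-1}\choose{\ell-2}}$ demanded by the target identity, I would extend the range of summation down to $\ell=2$ and then compensate for the spurious term introduced there, which equals $2{{k+j-1}\choose{0}}=2$; this is precisely the origin of the $-2$ at the end of \eqref{B_{2k+j,k}sumformbasedonbinpartition}. Next, the term $-\frac{\ell^2-\ell}{2}{{j-1}\choose{\ell-3}}$ occurs exactly in the second case ($3\le\ell\le j+1$) and the third case ($\ell=j+2$), so these contributions aggregate to $-\sum_{\ell=3}^{j+2}\frac{\ell^2-\ell}{2}{{j-1}\choose{\ell-3}}$, which is the second sum in the target formula.

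The remaining contribution is the double-sum correction, which is present only in the second case: for each $\ell$ with $3\le\ell\le j+1$ we subtract $\sum_{i=1}^{s}(\ell^2-\ell){{j-i-1}\choose{\ell-3}}$ with $s=j+2-\ell$. Collecting these over all $\ell$ produces $\sum_{\ell=3}^{j+1}\sum_{i=1}^{j+2-\ell}(\ell^2-\ell){{j-i-1}\choose{\ell-3}}$, whose index set is the triangular region $\{(\ell,i):\ell\ge3,\ i\ge1,\ \ell+i\le j+2\}$. The main obstacle --- indeed the only step that is not purely cosmetic --- is to interchange the order of summation here. Reading the region by columns in $i$ rather than by rows in $\ell$, the constraints $\ell+i\le j+2$ and $\ell\ge3$ force $1\le i\le j-1$, and for each such $i$ they give $3\le\ell\le j-i+2$. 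This recasts the double sum as $\sum_{i=1}^{j-1}\sum_{\ell=3}^{j-i+2}(\ell^2-\ell){{j-i-1}\choose{\ell-3}}$, exactly the third sum in \eqref{B_{2k+j,k}sumformbasedonbinpartition}. Assembling the three regrouped contributions with the $-2$ boundary correction then yields the claimed identity.
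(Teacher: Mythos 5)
Your proposal is correct and follows essentially the same route as the paper's own proof: sum the piecewise formula \eqref{totalFixedBin} over $\ell$, extend the leading sum down to $\ell=2$ at the cost of the constant $2$, and interchange the order of the coupled double sum over the triangular region $\ell+i\le j+2$. If anything, your explicit description of that index region makes the interchange step more transparent than the paper's brief remark that the two finite sums may be swapped freely.
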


\begin{proof} This lemma follows from summing the result of \eqref{totalFixedBin} over $3 \leq \ell \leq k + j + 1$. The double sum arises from summing ${{j - i - 1}\choose{\ell - 3}}$ over different values of $i$ that represented over-counted configurations for each value of $s$. Here $i$ ranges from $1$ to $j - 1$ because those were the possible values of $s$ in \eqref{totalFixedBin}, and we can interchange these two sums freely as they are both finite; it will be easier to evaluate this sum by summing over $\ell$ first. 

In this formula, we extend the range of the first sum to $\ell = 2$, and clearly that term will always have value $2$; hence we subtract a $2$ at the end of the equation. We do this because it will make the future calculation of that sum slightly less cumbersome.
\end{proof}

We now have a combination of sums which together represent the value of $B_{2k + j, k}$. These sums can all be converted to closed forms, but doing so is a lengthy process. We will treat the evaluation of the sums in \eqref{B_{2k+j,k}sumformbasedonbinpartition} as distinct parts of the same lemma, and then the overall evaluation of \eqref{B_{2k+j,k}sumformbasedonbinpartition} as a single closed form will be the main theorem for this section.

\begin{lemma}\label{B2k+j,ksumevallemma} Let $j, k \in \N^+$ with $k > j$. Then the sums appearing in \eqref{B_{2k+j,k}sumformbasedonbinpartition} are evaluated as follows:
\begin{equation}\label{B2k+j,ksumeval1}
\sum^{k + j + 1}_{\ell = 2}\ell{{k + j - 1}\choose{\ell - 2}} \ = \ (k + j + 3)2^{k + j - 2}
\end{equation}
\begin{equation}\label{B2k+j,ksumeval2}
\sum^{j + 2}_{\ell = 3}\frac{\ell^2 - \ell}{2}{{j - 1}\choose{\ell - 3}} \ = \ 2^{j - 4}(j^2 + 9j + 14)
\end{equation}
\begin{equation}\label{B2k+j,ksumeval3}
\sum^{j - 1}_{i = 1}\sum^{j - i + 2}_{\ell = 3}(\ell^2 - \ell){{j - i - 1}\choose{\ell - 3}} \ = \ 2^{j - 3}(j^2 + 5j + 2) - 2.
\end{equation}
\end{lemma}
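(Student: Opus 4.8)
The plan is to evaluate the three sums in \eqref{B2k+j,ksumeval1}--\eqref{B2k+j,ksumeval3} one at a time, reducing each to elementary identities. The workhorses are the three binomial moment sums
\[
\sum_{m=0}^{M}\binom{M}{m}=2^{M},\qquad \sum_{m=0}^{M}m\binom{M}{m}=M2^{M-1},\qquad \sum_{m=0}^{M}m^{2}\binom{M}{m}=M(M+1)2^{M-2},
\]
where the last follows from $m^{2}=m(m-1)+m$ together with \eqref{groupingBallsClosedEq2}. For \eqref{B2k+j,ksumeval1} I would simply note that the substitution $m=\ell-2$ turns $\sum_{\ell=2}^{k+j+1}\ell\binom{k+j-1}{\ell-2}$ into the sum \eqref{groupingBallsEq1} with the parameter $n-k$ replaced by $k+j$; hence the computation culminating in \eqref{groupingBallsClosedEq4} gives the value $(k+j+3)2^{k+j-2}$ immediately, with no new work.

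For \eqref{B2k+j,ksumeval2} the substitution $m=\ell-3$ gives $\ell^{2}-\ell=\ell(\ell-1)=(m+3)(m+2)=m^{2}+5m+6$, so that
\[
\sum_{\ell=3}^{j+2}\frac{\ell^{2}-\ell}{2}\binom{j-1}{\ell-3}=\frac12\sum_{m=0}^{j-1}(m^{2}+5m+6)\binom{j-1}{m}.
\]
Applying the three moment identities with $M=j-1$, collecting the powers of $2$ onto a common factor of $2^{j-3}$, and then dividing by $2$, yields $2^{j-4}(j^{2}+9j+14)$, as claimed.

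The double sum \eqref{B2k+j,ksumeval3} is the heart of the argument and where I expect the main obstacle to lie. First I would evaluate the inner sum for each fixed $i$: setting $p=j-i$ and repeating the $m=\ell-3$ substitution from the previous paragraph (this is the same moment computation, without the factor $\tfrac12$ and with $j$ replaced by $p$) shows
\[
\sum_{\ell=3}^{j-i+2}(\ell^{2}-\ell)\binom{j-i-1}{\ell-3}=(p^{2}+9p+14)2^{p-3}.
\]
One should still verify the degenerate values $p=1,2$ directly, since the binomial coefficients then have very small upper indices. Re-indexing the outer sum by $p=j-i$, so that $p$ runs from $1$ to $j-1$, collapses the double sum to the single sum $\sum_{p=1}^{j-1}(p^{2}+9p+14)2^{p-3}$.

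The remaining difficulty is that, unlike the first two sums, this last one is arithmetic-geometric rather than binomial, so it requires the finite identities
\[
\sum_{p=0}^{N}2^{p}=2^{N+1}-1,\qquad \sum_{p=0}^{N}p\,2^{p}=(N-1)2^{N+1}+2,\qquad \sum_{p=0}^{N}p^{2}2^{p}=(N^{2}-2N+3)2^{N+1}-6,
\]
each obtainable by differentiating the geometric series or by summation by parts. Taking $N=j-1$, subtracting the $p=0$ term (which equals $14$) to compensate for the lower limit $p=1$, and simplifying should produce $2^{j-3}(j^{2}+5j+2)-2$, matching \eqref{B2k+j,ksumeval3}. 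I expect the only real risk to be careful bookkeeping of the additive constant and of the shifted lower limits across the three pieces; once the arithmetic-geometric identities are in place the rest is mechanical.
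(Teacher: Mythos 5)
Your proposal is correct and follows essentially the same route as the paper: shift indices so each binomial sum becomes a combination of the moment identities $\sum_m \binom{M}{m}$, $\sum_m m\binom{M}{m}$, $\sum_m m^2\binom{M}{m}$, reuse the dominant-bin computation for the first sum, and for the double sum re-index by $p=j-i$ (the paper's $t=j-i$) to reduce to the arithmetic--geometric sums $\sum p^a 2^p$, whose closed forms you state correctly and which combine to $2^{j-3}(j^2+5j+2)-2$ exactly as in the paper. The only cosmetic difference is that you close the inner sum to $(p^2+9p+14)2^{p-3}$ before summing over $p$, whereas the paper splits the double sum into three pieces first; the arithmetic is identical.
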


\begin{proof} We will start by proving \eqref{B2k+j,ksumeval1}. In order to use the lemmas in Appendix \ref{BCID}, we shift the index of the sum by $2$:
\begin{equation}\label{B2k+j,ksumeval1EqA}
\sum^{k + j + 1}_{\ell = 2}\ell{{k + j - 1}\choose{\ell - 2}} \ = \ \sum^{k + j - 1}_{\ell = 0}(\ell + 2){{k + j - 1}\choose{\ell}}.
\end{equation}
Now split this into two sums: \eqref{B2k+j,ksumeval1EqA} equals
\begin{equation}\label{B2k+j,ksumeval1EqB}
\sum^{k + j - 1}_{\ell = 0}\ell{{k + j - 1}\choose{\ell}} + 2\sum^{k + j - 1}_{\ell = 0}{{k + j - 1}\choose{\ell}}.
\end{equation}
To handle the first sum, we use \eqref{BCIdentitym=1} from Appendix \ref{BCID}, and the second is the binomial expansion of $(1 + 1)^{k + j - 1}$, so \eqref{B2k+j,ksumeval1EqB} equals
\begin{equation}\label{B2k+j,ksumeval1EqC}
(k + j - 1)2^{k + j - 2} + 2^{k + j} \ = \ (k + j + 3)2^{k + j - 2},
\end{equation}
completing the proof of \eqref{B2k+j,ksumeval1}.

Now we will proceed with proving \eqref{B2k+j,ksumeval2}, starting by shifting the index of the sum by $3$:
\begin{equation}\label{B2k+j,ksumeval2EqA}
\sum^{j + 2}_{\ell = 3}\frac{\ell^2 - \ell}{2}{{j - 1}\choose{\ell - 3}} \ = \ \frac{1}{2}\sum^{j - 1}_{\ell = 0}(\ell^2 + 5\ell + 6){{j - 1}\choose{\ell}}.
\end{equation}
We split this sum into three based on the power of $\ell$ before the binomial coefficient:
\begin{equation}\label{B2k+j,ksumeval2EqB}
\frac{1}{2}\sum^{j - 1}_{\ell = 0}\ell^2{{j - 1}\choose{\ell}} + \frac{5}{2}\sum^{j - 1}_{\ell = 0}\ell{{j - 1}\choose{\ell}} + 3\sum^{j - 1}_{\ell = 0}{{j - 1}\choose{\ell}}.
\end{equation}
The three sums can be evaluated respectively as follows: for the first, use \eqref{BCIdentitym=2}; for the second use \eqref{BCIdentitym=1}; and finally, the third is the binomial expansion of $(1 + 1)^{j - 1}$, so \eqref{B2k+j,ksumeval2EqB} equals
\begin{equation}\label{B2k+j,ksumeval2EqC}
\frac{1}{2}j(j - 1)2^{j - 3} + \frac{5}{2}(j - 1)2^{j - 2} + 3 \cdot 2^{j - 1}.
\end{equation}
We can factor out a $2^{j - 4}$ from every term and simplify the resulting quadratic factor to obtain \eqref{B2k+j,ksumeval2}.

Finally, we will prove \eqref{B2k+j,ksumeval3}. Again we start by shifting the index of the [inner] sum by $3$:
\begin{equation}\label{B2k+j,ksumeval3EqA}
\sum^{j - 1}_{i = 1}\sum^{j - i + 2}_{\ell = 3}(\ell^2 - \ell){{j - i - 1}\choose{\ell - 3}} \ = \  \sum^{j - 1}_{i = 1}\sum^{j - i - 1}_{\ell = 0}(\ell^2 + 5\ell + 6){{j - i - 1}\choose{\ell}}.
\end{equation}
Break \eqref{B2k+j,ksumeval3EqA} into three sums based on the power of $\ell$ to allow us to use the identities in Appendix \ref{BCID}. Then, \eqref{B2k+j,ksumeval3EqA} equals
\begin{equation}\label{B2k+j,ksumeval3EqB}
\sum^{j - 1}_{i = 1}\sum^{j - i - 1}_{\ell = 0}\ell^2{{j - i - 1}\choose{\ell}} + 5\sum^{j - 1}_{i = 1}\sum^{j - i - 1}_{\ell = 0}\ell{{j - i - 1}\choose{\ell}} + 6\sum^{j - 1}_{i = 1}\sum^{j - i - 1}_{\ell = 0}{{j - i - 1}\choose{\ell}}.
\end{equation}
Now, we use \eqref{BCIdentitym=2} on the first double sum and \eqref{BCIdentitym=1} on the second double sum, and notice the third term's inner sum is the binomial expansion of $(1 + 1)^{j - i - 1},$ allowing us to conclude that \eqref{B2k+j,ksumeval3EqB} equals
\begin{equation}\label{B2k+j,ksumeval3EqC}
\sum^{j - 1}_{i = 1}((j - i - 1)(j - i)2^{j - i - 3} + 5(j - i - 1)2^{j - i - 2} + 6 \cdot 2^{j - i - 1}).
\end{equation}
To greatly simplify the remainder of this calculation, we use a trick that somewhat resembles u-substitution for integrals. The current sum ranges from $i = 1$ to $i = j - 1$, where $j$ is fixed, so if we let $t = j - i$ and index the sum with respect to $t$, the range of the sum is $t = j - 1$ to $t = 1$. This greatly simplifies the summand, and by performing this substitution we realize \eqref{B2k+j,ksumeval3EqC} equals
\begin{equation}\label{B2k+j,ksumeval3EqD}
\sum^{j - 1}_{t = 1}(t(t - 1)2^{t - 3} + 5(t - 1)2^{t - 2} + 6 \cdot 2^{t - 1}).
\end{equation}
We utilize a series of algebraic steps that are not particularly insightful but lead us to a closed form for \eqref{B2k+j,ksumeval3EqD}:
\begin{eqnarray}
\sum^{j - 1}_{t = 1}(t(t - 1)2^{t - 3} + 5(t - 1)2^{t - 2} + 6 \cdot 2^{t - 1}) \ = \ \frac{1}{8}\sum^{j - 1}_{t = 1}2^t(t^2 + 9t + 14) \ &= \  \nonumber \\
 \frac{1}{8}\left(\sum^{j - 1}_{t = 1}t^22^t + 9\sum^{j - 1}_{t = 1}t2^t + 14\sum^{j - 1}_{t = 1}2^t\right) \ &= \ \nonumber \\
\frac{1}{8}(2^j(j^2 - 4j + 6) - 6 + 9(j2^j - 2^{j + 1} + 2) + 14(2^j - 2)) \ &= \ \nonumber \\
 \frac{1}{8}(j^22^j + 5j2^j + 2^{j + 1} - 16) \ &= \ \nonumber \\
 2^{j - 3}(j^2 + 5j + 2) - 2,
\end{eqnarray}
completing the proof. 
\end{proof}

We can now prove the main result of this section, a closed form for $B_{2k + j, k}$.

\begin{theorem}[Closed Form for $B_{2k + j, k}$]\label{B_{2k+j,k}closedform}
Let $j, k \in \N^+$ with $k > j$. Then the number of ways to split $2k + j$ balls into nonempty bins so the most crowded bin has exactly $k$ balls is
\begin{equation}\label{B_{2k+j,k}closedformEq}
B_{2k + j, k} \ = \ (k + j + 3)2^{k + j - 2} - (3j^2 + 19j + 18)2^{j - 4}.
\end{equation}
\end{theorem}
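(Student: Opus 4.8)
The plan is to assemble the closed form directly from the ingredients already prepared: Lemma \ref{B_{2k+j,k}sumformbasedonbinpartitionlemma} expresses $B_{2k+j,k}$ as a combination of three sums minus a trailing constant, and Lemma \ref{B2k+j,ksumevallemma} evaluates each of those three sums in closed form. Hence the entire argument reduces to a single substitution followed by elementary algebraic simplification; no new combinatorial reasoning is required.

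First I would recall the summation formula \eqref{B_{2k+j,k}sumformbasedonbinpartition} and substitute the three evaluations \eqref{B2k+j,ksumeval1}, \eqref{B2k+j,ksumeval2}, and \eqref{B2k+j,ksumeval3} into it. This yields
\begin{equation}
B_{2k + j, k} \ = \ (k + j + 3)2^{k + j - 2} - 2^{j - 4}(j^2 + 9j + 14) - \big(2^{j - 3}(j^2 + 5j + 2) - 2\big) - 2.
\end{equation}

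Next I would observe that the $+2$ arising from the closed form of the third sum in \eqref{B2k+j,ksumeval3} cancels against the standalone $-2$ at the end of \eqref{B_{2k+j,k}sumformbasedonbinpartition}, eliminating both constant terms. The two remaining negative contributions both sit at powers of two near $2^{j-4}$, so I would rewrite $2^{j - 3}(j^2 + 5j + 2) = 2^{j - 4} \cdot 2(j^2 + 5j + 2)$ and factor $2^{j-4}$ out of both. Combining the quadratic coefficients then gives $(j^2 + 9j + 14) + (2j^2 + 10j + 4) = 3j^2 + 19j + 18$, which produces exactly the claimed expression \eqref{B_{2k+j,k}closedformEq}.

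There is no genuine obstacle here: the difficult combinatorial bookkeeping and the sum evaluations were completed in Lemmas \ref{T_{2k+j,[k+i,k]}} through \ref{B2k+j,ksumevallemma}, so this theorem is purely a consolidation step. The only points demanding attention are the cancellation of the two constant terms and the correct alignment of the powers of two (writing $2^{j-3}$ as $2 \cdot 2^{j-4}$) before combining the quadratics. Once those bookkeeping details are handled, the identity follows immediately.
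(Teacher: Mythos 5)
Your proposal is correct and follows exactly the paper's own route: the paper's proof is precisely this substitution of \eqref{B2k+j,ksumeval1}--\eqref{B2k+j,ksumeval3} into \eqref{B_{2k+j,k}sumformbasedonbinpartition} with the observation that the constant terms cancel. Your explicit algebra (rewriting $2^{j-3}(j^2+5j+2)$ as $2^{j-4}(2j^2+10j+4)$ and combining to get $3j^2+19j+18$) checks out and merely spells out what the paper leaves implicit.
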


\begin{proof} This formula follows from directly substituting \eqref{B2k+j,ksumeval1}, \eqref{B2k+j,ksumeval2}, and \eqref{B2k+j,ksumeval3} into \eqref{B_{2k+j,k}sumformbasedonbinpartition}. Notice that the constant terms cancel each other out. 
\end{proof}

While we successfully found a closed form for $B_{2k + j, k}$ with $k > j$, this method is not tractable as $m$ gets larger. Section \ref{GeneralizedBins} serves in part to generalize our summation formulas to the case where the parameters are not within fixed multiple factors of each other.

\section{Generalized solution for balls into bins with restrictions}\label{GeneralizedBins}

The previous section dealt with the formulae for the Balls in Bins problem without a dominant bin for the case when the number of bins was $\ell$, the most crowded bin had exactly $k$ balls and the total number of balls was $2k + j$ with $j < k$. However, one may wonder if it would be possible for the number of balls to be arbitrary and not dependent on other variables (like $k$ in this case). Therefore, we aim to generalize the concept of putting Balls into Bins with restrictions, as defined in Question \ref{gen_ballsbins}. Let us revisit Definition \ref{bins_restrict} and Question \ref{gen_ballsbins}, which were posed in the Introduction (section \ref{intro1}), to obtain a better understanding of the meaning of the problem and the notation which will be used for the rest of the paper. 

\begin{defi}[Generalized Bins restriction problem] \label{new_bins_restrict}
Let $n, \ell, k \in \N^{+}$. The Generalized Bins restriction problem aims to find the number of ways to split n balls into $\ell$ bins such that the maximum number of balls in each bin is at most $k$. 

\end{defi}

\begin{question}[Generalized Balls into Bins with restrictions problem]\label{new_gen_ballsbins}
Let $n, \ell, k \in \N^{+}$. This problem asks how many ways can we split $n$ balls into $\ell$ non-empty bins such that the most crowded bin has exactly $k$ balls?
\end{question}

The notation associated with the quantity defined in the Generalized Bins restriction problem in the Definition \ref{new_bins_restrict} will henceforth be denoted as $R_{n, \ell, k}$. Note that this is different from the problem we want to tackle, in the sense that the maximum number of balls in each bin is at most $k$, not exactly $k$, which was the case investigated in the earlier sections. Also, the notation associated with the quantity defined in the Generalized Balls into Bins with restriction problem in the Question \ref{new_gen_ballsbins} will henceforth be denoted by $M_{n, \ell, k}$. The main result of this section, Theorem \ref{generalizedBallsIntoBinsRestrictionsII}, will be a formula for $R_{n, \ell, k}$. An intermediate step will be to count the number of configurations described by Definition \ref{new_bins_restrict}. 

\subsection{Formula for generalized bins restriction problem: $R_{n, \ell, k}$}\label{formulaR}

\begin{thm} \label{bins_restrict_formula}
Let $n$, $\ell$, $k \in \N^+$. Then
\begin{equation} \label{formulaR_n_ell_k}
R_{n, \ell, k} \ = \ \sum^{\ell}_{t = 0}(-1)^t{{\ell}\choose{t}}{{n - t(k + 1) + \ell - 1}\choose{\ell - 1}}.
\end{equation}
\end{thm}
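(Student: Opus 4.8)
The plan is to recognize $R_{n,\ell,k}$ as the number of solutions in nonnegative integers to
\begin{equation*}
x_1 + x_2 + \dots + x_\ell = n, \qquad 0 \le x_i \le k \text{ for each } i,
\end{equation*}
where $x_i$ records the number of balls in the $i$-th (ordered, possibly empty) bin. Dropping the upper bounds $x_i \le k$, the unrestricted count is the classical weak-composition (stars-and-bars) number $\binom{n + \ell - 1}{\ell - 1}$, which is precisely the $t = 0$ term of the proposed formula. Thus the task reduces to removing, via the Principle of Inclusion and Exclusion, exactly those solutions that violate at least one upper bound.

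First I would define, for each $1 \le i \le \ell$, the ``bad event'' $A_i$ consisting of the nonnegative solutions with $x_i \ge k + 1$. A solution is valid precisely when it lies in none of the $A_i$, so P.I.E.\ gives
\begin{equation*}
R_{n,\ell,k} \ = \ \sum_{S \subseteq \{1,\dots,\ell\}} (-1)^{|S|} \left| \bigcap_{i \in S} A_i \right|.
\end{equation*}
The key computation is to evaluate $\left| \bigcap_{i \in S} A_i \right|$ for a fixed $S$ with $|S| = t$. On this intersection every variable indexed by $S$ is at least $k + 1$, so I would apply the standard shift $y_i = x_i - (k+1)$ for $i \in S$ and $y_i = x_i$ otherwise; this is a bijection onto the nonnegative solutions of $y_1 + \dots + y_\ell = n - t(k+1)$, whose count is $\binom{n - t(k+1) + \ell - 1}{\ell - 1}$ by stars-and-bars. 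Because this value depends only on $t = |S|$, and there are $\binom{\ell}{t}$ subsets of each size, collapsing the sum over subsets into a sum over sizes yields the claimed identity.

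I do not expect any single step to be a genuine obstacle; once the shift bijection is in place, the argument is a routine application of inclusion-exclusion. The only point requiring care is the boundary behavior: when $t$ is large enough that $n - t(k+1) < 0$, the intersection $\bigcap_{i \in S} A_i$ is empty, and the corresponding binomial coefficient must be taken to vanish. I would therefore state the convention $\binom{m}{\ell - 1} = 0$ for $m < \ell - 1$ explicitly, which justifies extending the outer sum all the way to $t = \ell$. I would also emphasize that the bins here are permitted to be empty — consistent with the leading term $\binom{n + \ell - 1}{\ell - 1}$ rather than the nonempty stars-and-bars count $\binom{n - 1}{\ell - 1}$ — since this is exactly what distinguishes $R_{n,\ell,k}$ from the quantities $M_{n,\ell,k}$ studied in the earlier sections.
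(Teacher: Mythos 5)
Your proposal is correct and follows essentially the same route as the paper: both arguments apply the Principle of Inclusion and Exclusion to the events $A_i$ that bin $i$ exceeds $k$ balls, evaluate the $t$-fold intersections by pre-placing $k+1$ balls in each offending bin (your shift $y_i = x_i - (k+1)$ is the same computation), and collapse over subset sizes to get $\binom{\ell}{t}\binom{n - t(k+1) + \ell - 1}{\ell - 1}$. Your explicit remarks on the vanishing convention for binomial coefficients with negative upper entry and on the bins being allowed to be empty match points the paper also makes, the former immediately after its proof.
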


\begin{proof} 
We will prove this formula with the help of the Principle of Inclusion and Exclusion. First, we note that the total number of filling $n$ balls into $\ell$ bins with no upper restriction on the number of bins is simply
\begin{equation}\label{stars-bars}
    {n + \ell - 1} \choose {\ell - 1}.
\end{equation}
Now, this would include all the configurations that satisfy the restriction condition that each bin contains at most $k$ and all the configurations which do not satisfy this condition. So, the aim of this proof is to first calculate the total number of configurations that do not satisfy the restriction condition and then subtract that from Equation \eqref{stars-bars}.

Let us assume that the bins have been labelled in some order, indexed from $1$ to $\ell$. Let $A_i$ denote the finite set of all configurations such that the $i^{th}$ bin contains more than $k$ balls. Using this definition, note that the total number of configurations that violate the restriction condition is equal to 
\begin{equation}
    \left| \bigcup\limits_{i=1}^{\ell} A_i \right|
\end{equation}
Using the Principle of Inclusion and Exclusion, this is equivalent to the following formula:
%EHG
\begin{equation} \label{PIE_formula}
\begin{aligned}
    \left| \bigcup\limits_{i=1}^{\ell} A_i \right| \ & =\ \sum_{i = 1}^\ell \left| A_i \right|\ -\ \sum_{1 \le i < j \le \ell} \left| A_i \cap A_j \right| \ +\ \sum_{1 \le i < j < k \le \ell} \left| A_i \cap A_j \cap A_k \right| \ -\  \cdots \\  \ +\ (-1)^{\ell - 1} \left| A_1 \cap A_2 \cdots \cap A_\ell \right|.
\end{aligned}
\end{equation}
Now, to find all such configurations, let us find the cardinalities in \eqref{PIE_formula} by breaking into cases. 

\underline{Terms in the First Summation}: This is the number of configurations such that bin number $i$ contains more than $k$ balls. Note that this statement means that bin $i$ contains at least $k+1$ balls. We first put $k+1$ balls into bin $i$. Now, we just need to distribute the remaining $n - k - 1$ balls into $\ell$ bins (Note that bin $i$ is included because we have filled only the "minimum" number of balls in the bin, and it may contain more than $k+1$ balls). Therefore, the total number of ways to place $n - 1(k+1)$ balls into $\ell$ bins is 
\begin{equation} \label{PIE_1}
    {n - 1(k + 1) + \ell - 1} \choose {\ell - 1}.    
\end{equation}
We need to sum this formula over all possible values of $i$, i.e $\sum_{i = 1}^\ell \left| A_i \right|$. This means that we just need to multiply the formula obtained in Equation \eqref{PIE_1} by $\ell$ to sum over all values of $i$. Therefore, we have the following formula: 
\begin{equation}
    \sum_{i = 1}^\ell \left| A_i \right|\ =\ \ell{{n - 1(k + 1) + \ell - 1} \choose {\ell - 1}} \ =\  {{{\ell} \choose {1}} {{n - 1(k + 1) + \ell - 1} \choose {\ell - 1}}}.
\end{equation}
\underline{Terms in the Second Summation}: This is the number of configurations such that the bins numbered $i$ and $j$ where $i < j$ have more than $k$ balls. This means both the bins $i$ and $j$ contain at least $k + 1$ balls. So we again put $k+1$ balls in each bin. Therefore, we have $n - 2(k+1)$ balls remaining to put into $\ell$ bins. Therefore the number of ways to place $n - 2(k + 1)$ balls into $\ell$ bins is: 
\begin{equation} \label{PIE_2}
    {n - 2(k + 1) + \ell - 1} \choose {\ell - 1}    
\end{equation}
We need to sum this formula over all possible values of $i, j$, i.e $\sum_{1 \le i < j \le \ell} \left| A_i \cap A_j \right|$. The number of ways of choosing two such bins out of $\ell$ bins is the simple combinations formula, i.e ${ {\ell} \choose {2} }$. We just need to multiply the formula obtained in Equation \ref{PIE_2} by ${ {\ell} \choose {2} }$. Therefore, we obtain the following formula: 
\begin{equation}
    \sum_{1 \le i < j \le \ell} \left| A_i \cap A_j \right| \ =\ {{{\ell} \choose {2}} {{n - 2(k + 1) + \ell - 1} \choose {\ell - 1}}}.
\end{equation}
We proceed similarly for the other summations given in the formula of Inclusion and Exclusion, and substitute the formulae obtained into Equation \eqref{PIE_formula} which gives us the following formula: 
\begin{equation} \label{PIE_balls}
       \left| \bigcup\limits_{i=1}^{\ell} A_i \right| \ =\  \sum^{\ell}_{t = 1}(-1)^{t - 1}{{\ell}\choose{t}}{{n - t(k + 1) + \ell - 1}\choose{\ell - 1}}.
\end{equation}
Note that this is the number of all configurations that do not satisfy the restriction condition. So, to obtain the number of configurations satisfying the restriction condition, we just subtract Equation \eqref{PIE_balls} from Equation \eqref{stars-bars} to obtain the formula. Therefore, we obtain 
\begin{equation}
    R_{n, \ell, k} \ = \ {{{n + \ell - 1} \choose {\ell - 1}} - \sum_{t = 1}^{\ell} (-1)^{t - 1}{{\ell}\choose{t}}{{n - t(k + 1) + \ell - 1}\choose{\ell - 1}}}. 
\end{equation}
This equation can be further simplified as follows: \begin{equation}
    R_{n, \ell, k} \ = \ \sum^{\ell}_{t = 0}(-1)^t{{\ell}\choose{t}}{{n - t(k + 1) + \ell - 1}\choose{\ell - 1}}.
\end{equation}
\end{proof}
This formula should actually have the restriction on $t$ that $0 \le t \le \frac{n}{k+1}$, because for the cases where $t \ge \frac{n}{k+1}$, the binomial coefficient ${{n - t(k + 1) + \ell - 1}\choose{\ell - 1}}$ would return erroneous values, depending on the definition of a binomial coefficient, which could be defined for negative values too. That said, we want all of these cases to be equated to $0$ because they all violate the definition of our main problem. So, to make things easier, we can define the Binomial Coefficient ${{n}\choose{k}}$ in a new (albeit equivalent) way to use the formula in Equation \ref{formulaR_n_ell_k}. The following definition of binomial coefficients will be used in the remainder of the paper.

% \textcolor{red}{INSERT nCk formula}

\begin{equation}\label{factdef}
{{n} \choose {k}} \ = \ \begin{cases}
\frac{n!}{k!\,(n \ - \ k)!}, \ \ 
n \ge 0\ \ , k\ge 0\ , \ n \ge k  \\
0, \ \  \ \ \ \ \ \ \ \ \ \ \text{otherwise} 
\end{cases}.
\end{equation}

This definition will ease the computation of the binomial coefficients and the cases associated with these in the coming sections. 

\subsection{Formula for $M_{n,\ell,k}$}\label{formulaM}

We approached the problem of Generalized Balls into Bins with restriction in two different ways to obtain different formulae representing the same problem. One of the highlights of our work is that we are establishing new connections between the quantities $R_{n, \ell, k}$ and $M_{n, \ell, k}$ that have not been previously explored in the literature. The first method utilized in this section features the Principle of Inclusion and Exclusion (abbreviated P.I.E.) to derive \eqref{gen_ballsbins}. 

\begin{thm}[Formula for Generalized Balls into Bins with restrictions problem (I)]
Suppose $n$,$\ell$, $k \in \N^+$ such that $\ell + k -1 \leq n \leq \ell k$. Then the following identity holds:
\begin{equation}\label{formulaM_n_ell_k}
 M_{n, \ell, k} \ = \ \sum^{\ell}_{t = 1}\ {(-1)^{t-1}{{\ell}\choose{t}}{R_{n - t(k - 1) - \ell, \ell - t, k - 1}}}. 
 \end{equation}
\end{thm}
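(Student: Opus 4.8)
The plan is to prove \eqref{formulaM_n_ell_k} directly with the Principle of Inclusion and Exclusion, choosing the events so that their union is exactly the set of configurations counted by $M_{n, \ell, k}$. Label the $\ell$ bins from $1$ to $\ell$, and for each $i$ let $A_i$ be the set of arrangements of $n$ balls into $\ell$ \emph{nonempty} bins in which every bin holds at most $k$ balls \emph{and} bin $i$ holds exactly $k$ balls. Building the ``at most $k$'' restriction into each $A_i$ is the key design choice: with it, $\bigcup_{i=1}^{\ell} A_i$ is precisely the collection of configurations in which all bins have at most $k$ balls and at least one bin attains $k$, which is the same as saying the most crowded bin has exactly $k$ balls. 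Hence $M_{n, \ell, k} = \left| \bigcup_{i=1}^{\ell} A_i \right|$, and I would expand this by P.I.E. exactly as in \eqref{PIE_formula}.

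Next I would evaluate the generic term. For a fixed subset $S \subseteq \{1, \dots, \ell\}$ with $|S| = t$, the intersection $\bigcap_{i \in S} A_i$ consists of the nonempty configurations in which every bin in $S$ holds exactly $k$ balls and every bin holds at most $k$ balls. By the symmetry of the bins this cardinality depends only on $t$, and there are $\binom{\ell}{t}$ choices of $S$, so the $t$-th term of the expansion is $(-1)^{t-1}\binom{\ell}{t}N_t$, where $N_t$ denotes the count for one fixed $S$. To compute $N_t$, I place exactly $k$ balls in each of the $t$ bins of $S$, consuming $tk$ balls; the remaining $n - tk$ balls must then be distributed among the other $\ell - t$ bins so that each is nonempty and holds at most $k$ balls. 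Note that these remaining bins are still permitted to reach size $k$, which is exactly what makes the signed P.I.E. sum collapse correctly onto each configuration having at least one bin of size $k$.

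The heart of the argument, and the step I expect to require the most care, is to rewrite this last count as an instance of $R$. Since $R_{m, p, c}$ permits empty bins and caps each bin at $c$, I would first seat one mandatory ball in each of the $\ell - t$ remaining bins to enforce nonemptiness; this leaves $n - tk - (\ell - t)$ balls to be placed freely among $\ell - t$ bins, each now able to receive at most $k - 1$ further balls. Therefore $N_t = R_{\,n - tk - (\ell - t),\, \ell - t,\, k - 1}$, and the index simplifies via $n - tk - (\ell - t) = n - t(k-1) - \ell$. Substituting this into the P.I.E. expansion yields exactly \eqref{formulaM_n_ell_k}. The points to verify carefully are that the ``nonempty, at most $k$'' restriction corresponds cleanly to the shifted ``at most $k-1$'' restriction after pre-loading the bins, that the hypothesis $\ell + k - 1 \le n \le \ell k$ keeps all the relevant $R$-arguments in a sensible range (with the convention \eqref{factdef} silently zeroing out any degenerate binomial coefficients), and that the $t = 0$ term is correctly absent because a configuration with maximum exactly $k$ must use at least one bin of size $k$.
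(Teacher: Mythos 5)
Your proof is correct and follows essentially the same route as the paper's: an inclusion--exclusion over which bins are ``full'' (contain exactly $k$ balls), with each intersection counted by seating $k$ balls in the designated full bins, pre-loading one ball into each remaining bin to enforce nonemptiness, and recognizing the leftover count as $R_{n - t(k-1) - \ell,\, \ell - t,\, k-1}$. Your explicit definition of the events $A_i$ (with the ``at most $k$'' cap built in, so that $M_{n,\ell,k} = \left| \bigcup_i A_i \right|$) is a slightly cleaner formalization of the same argument.
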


\begin{proof} 
We begin the proof by taking cases for the number of bins which are full (i.e how many bins contain exactly $k$ balls) and then distributing the remaining balls into the bins which are not meant to be full with updated restrictions. After that, we use the Principle of Inclusion to construct to sum up all the cases to prove the theorem. So, for case $1$ (where at least one bin is full), the number of ways we can choose $1$ bin out of $\ell$ bins to be full is simply ${{\ell}\choose{1}}$. Now, let us fill the rest of the bins with the balls, while keeping the new restriction in mind. Let's call these variables $\text{new balls}$, $\text{new bins},$ and $\text{new restrict}$. Note that we have already given $k$ balls to a bin, so the number of remaining bins is $\ell -1$. Also note that none of the remaining $\ell - 1$ bins can remain empty, so we 'put' one ball in each of the remaining $\ell -1$ bins. The number of balls remaining is simply $n - k - \ell + 1$. However, note now that as each of the remaining $\ell - 1$ bins contains $1$ ball. In order to apply the formula for Generalized Bins restriction Equation \eqref{bins_restrict_formula} on these remaining bins, we need to reduce the restriction on the number of balls in a bin from $k$ to $k-1$. So, we have the values of the new variables as follows: 
\begin{eqnarray}\label{GeneralizedBinsVariableReassign}
\text{new balls} \  = \ & n - k - \ell + 1 \\
\text{new bins} \ = \ & \ell - 1 \\
\text{new restrict} \ = \ & k - 1.
\end{eqnarray} 
In order to distribute the remaining balls into bins with the given restriction, we use Theorem \eqref{bins_restrict_formula} to identify $R_{\text{new balls}, \text{new bins}, \text{new restrict}}$ as the total number of ways of distribution, given the above conditions. Finally, we multiply this by ${{\ell}\choose{1}}$ and substitute the values of the variables to get the total number of ways such that at least $1$ bin is full is 
\begin{equation}\label{formulaM_n_ell_k_EqA}
{{\ell}\choose{1}}\ R_{n - k - \ell + 1, \ell - 1, k - 1}.
\end{equation}
However, notice that we have over-counted some cases, specifically the cases where at least 2 bins are full. To explain this in more detail, let $(b_1, b_2, b_3, \cdots , b_\ell)$ denote a particular valid configuration of balls in the bins numbered $1, 2, \cdots , \ell$ with the aforementioned conditions satisfied. Let us assume there are two indices $i$ and $j$ with $i \neq j$ such that $b_i = b_j = k$; that is, bins $i$ and $j$ are full with $k$ balls each. In the explanation above, when we chose a bin that was full, we counted this configuration once when we chose $b_i$ as the bin which is full. We also counted this configuration once when we chose $b_j$ as the bin which is full. So, we need to subtract the case where there are at least two bins full from the above sum. The formula for the case where at least two bins are full and which satisfy the original conditions is 
\begin{equation}\label{formulaM_n_ell_k_EqB}
{{\ell}\choose{2}}\ R_{n - 2(k - 1)  - \ell, \ell - 2, k - 1}.
\end{equation}
However, if we subtract this from the case where at least one bin was full, there would be some cases, like the number of bins with at least 3 bins full, which would be under-counted as these cases would appear in both of the above cases. So we need to add this to the original sum. By invoking the Principle of Inclusion and Exclusion, this would go on until the maximum possible number of bins contain $k$ balls. 

Considering the generalized case of putting $n$ balls in $\ell$ bins such that each bin has at most $k$ balls and at least $s$ bins are full, we have the formula 
\begin{equation}\label{general_PIE_equation}
    {{\ell}\choose{s}}{R_{n - s(k - 1) - \ell, \ell - s, k - 1}}.
\end{equation}
Using this formula along with P.I.E. to sum up all the cases gives us the required formula \eqref{formulaM_n_ell_k}. 
\end{proof}

Note that in the above statement, we use the statement "maximal number of bins possible contain $k$ balls", but have taken the summation of cases up to $\ell$. The above proof is valid due to the definition of ${{n}\choose{k}}$, which equates to $0$ for the case when $n \le 0$ or $n < k$. So the above formula is still valid because the rest of the cases do not contribute anything to the sum. 

This approach involving Inclusion-Exclusion has given us a very complicated formula in terms of $R_{n, \ell, k}$, which itself has a complicated formula! Thus it makes sense to find alternate ways to represent the same problem.

\begin{thm}[Formula for Generalized Balls into Bins with restrictions problem (II)]\label{generalizedBallsIntoBinsRestrictionsII} 
Suppose $n, \ell, k \in \N^+$ such that $\ell + k - 1 \leq n \leq \ell k$. Then the following identity for $M_{n, \ell, k}$ holds:
\begin{multline}\label{genBallsBinsRestEq2}
M_{n, \ell, k} \ = \ R_{n - \ell, \ell, k - 1} - R_{n - \ell, \ell, k - 2} \ = \\ \sum^{\ell}_{t = 0} (-1)^t  {{\ell} \choose{t}}{ \left[ {{n - tk - 1}\choose{\ell - 1}} - {{n - t(k - 1) - 1}\choose{\ell - 1}} \right]}.
\end{multline}
\end{thm}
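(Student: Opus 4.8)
The plan is to prove the two equalities in \eqref{genBallsBinsRestEq2} in turn, with the substance of the argument being a re-interpretation of the bounded-composition count $R$ in terms of \emph{nonempty} bounded configurations. First I would introduce an auxiliary quantity $P_{n,\ell,k}$: the number of ways to place $n$ balls into $\ell$ ordered \emph{nonempty} bins with every bin holding at most $k$ balls, equivalently the number of integer solutions of $x_1 + \cdots + x_\ell = n$ with $1 \le x_i \le k$. The key observation is that the substitution $y_i := x_i - 1$ is a bijection onto the solutions of $y_1 + \cdots + y_\ell = n - \ell$ with $0 \le y_i \le k-1$, which is precisely the quantity counted by $R_{n-\ell,\ell,k-1}$ in the sense of Definition~\ref{new_bins_restrict} (ordered bins, empties allowed, per-bin cap $k-1$). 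Hence $P_{n,\ell,k} = R_{n-\ell,\ell,k-1}$, and likewise $P_{n,\ell,k-1} = R_{n-\ell,\ell,k-2}$.

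Next I would deduce the first equality $M_{n,\ell,k} = R_{n-\ell,\ell,k-1} - R_{n-\ell,\ell,k-2}$ from the elementary disjoint-union identity ``(maximum exactly $k$) $=$ (maximum at most $k$) minus (maximum at most $k-1$)''. Among configurations of $n$ balls into $\ell$ nonempty ordered bins, every configuration whose maximum is at most $k$ has maximum either exactly $k$ or at most $k-1$, and these two families are disjoint; therefore $M_{n,\ell,k} = P_{n,\ell,k} - P_{n,\ell,k-1}$, which by the previous paragraph equals $R_{n-\ell,\ell,k-1} - R_{n-\ell,\ell,k-2}$. I would note that the hypotheses $\ell + k - 1 \le n \le \ell k$ only ensure the count can be positive and are not strictly needed for the identity, while the binomial-coefficient convention \eqref{factdef} is what forces all out-of-range terms to vanish so that the formula stays valid at the boundary.

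Finally, for the second equality I would substitute the closed form \eqref{formulaR_n_ell_k} from Theorem~\ref{bins_restrict_formula} into each $R$ term and simplify the arguments of the binomial coefficients. For $R_{n-\ell,\ell,k-1}$ the generic upper argument becomes $(n-\ell) - t\,((k-1)+1) + \ell - 1 = n - tk - 1$, and for $R_{n-\ell,\ell,k-2}$ it becomes $(n-\ell) - t\,((k-2)+1) + \ell - 1 = n - t(k-1) - 1$. Since both sums run over the same index range $0 \le t \le \ell$ with the common factor $(-1)^t\binom{\ell}{t}$, subtracting termwise collapses the two expressions into the single bracketed sum of \eqref{genBallsBinsRestEq2}. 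I expect the only genuine obstacle to be the bookkeeping in the first paragraph: one must track that $R$ is the ordered, possibly-empty count whereas $M$ is ordered and nonempty, so the shift $y_i = x_i - 1$ does double duty, simultaneously enforcing nonemptiness and lowering the per-bin cap by one; everything after that is a disjoint-union argument followed by routine binomial algebra.
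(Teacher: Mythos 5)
Your proposal is correct and follows essentially the same route as the paper: place one ball in each bin (equivalently the shift $y_i = x_i - 1$) to convert the nonempty, cap-$k$ count into $R_{n-\ell,\ell,k-1}$, subtract the cap-$(k-1)$ count $R_{n-\ell,\ell,k-2}$ to isolate maximum exactly $k$, and then expand via Theorem~\ref{bins_restrict_formula}. Your write-up is somewhat more explicit than the paper's (the bijection is named, and the index arithmetic for the second equality is actually carried out), but the underlying argument is identical.
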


\begin{proof}
We will prove this identity by counting all the configurations that satisfy only some parts of the definition of $M_{n, \ell, k}$, and then subtract all the configurations which do not satisfy the remaining criteria. Specifically, we temporarily relax the condition that a given bin can only have at most $k$ balls. Let us start by remembering that the formula $M_{n, \ell, k}$ holds true only for non-empty bins. We put one ball in each of the $\ell$ bins. Now, the remaining number of balls is $(n - \ell)$. So, we apply equation \ref{bins_restrict_formula} to fill the bins with the rest of the $n-\ell$ balls. However, note that the restriction must now be reduced from $k$ to $k - 1$ because each bin has a ball and we want at most $k$ balls in each bin. So, the number of ways to distribute $(n - \ell)$ balls into $\ell$ bins such that each bin gets at most $k - 1$ balls is simply $R_{n-\ell, \ell, k - 1}$. 

However, note that we have included some unwanted configurations in the above formula. The statement of Definition \ref{gen_ballsbins} also says that the most crowded bin must contain \textit{exactly} $k$ balls. So, there are some cases where the most crowded bin might contain less than $k$ balls, but other than that, it would satisfy all the conditions. In order to eliminate these cases, note that if the most crowded bin does not contain $k$ balls, then it would contain at most $k - 1$ balls. These configurations fall under Definition \ref{bins_restrict}. Hence the number of ways we can distribute $n$ balls into $\ell$ non-empty bins such that each bin contains at most $k - 1$ balls is $R_{n - \ell, \ell, k - 2}$. Subtracting this from $R_{n-\ell, \ell, k - 1}$ gives the desired answer. 
\end{proof}

\begin{remark}\label{genBallsBinsRestEq2Remark} The restriction $\ell + k - 1 \leq n \leq \ell k$ was not explicitly used in the proof of \eqref{genBallsBinsRestEq2}. However, it is important because of the definition of $M_{n, \ell, k}$. Recall that $M_{n, \ell, k}$ counts the number of ways to split $n$ balls into $\ell$ nonempty bins so that the most crowded bin has exactly $k$ balls; the remaining $\ell - 1$ bins will contain at least one ball each. If this were to be possible for some values of $n$, $\ell$, and $k$, then necessarily $\ell + k - 1 \leq n \leq \ell k$. At least one bin must actually be filled to the maximum capacity, giving the lower bound. On the other hand, given $\ell$ bins with maximum capacity $k$, one obtains the greatest possible number of balls by filling all bins with the maximum capacity. This gives the upper bound. Therefore we are justified in using these bounds in the next lemma as well.
\end{remark}

The natural follow-up question after proving \eqref{genBallsBinsRestEq2} is to ask whether there is a closed form for that sum. We believe that the answer to this is negative, because the presence of $tk$ inside the binomial coefficients makes the sum inaccessible by evaluation techniques such as Snake Oil and the WZ Method (see \cite{Wil} for a detailed explanation of these techniques). As a result, we look to derive estimates instead. These estimates will involve exponential functions for two reasons. First, binomial coefficients are intimately related to exponential functions through Stirling's Formula (see for instance \cite{Bona, Fe, Rob, Sta, Sun}). Second, as we will see, such estimates will produce factors with $kt$ as a power, which is more manageable than $kt$ inside a binomial coefficient. 
The statement and proof of this lemma are technical in nature and are comprised mostly of elementary manipulations of sums and inequalities. Therefore we defer the exact statement of the lemma and its proof to Appendix \ref{genBinRestrictEstimates}.

\subsection{Some identities involving $M_{n, \ell, k}$ and $R_{n, \ell, k}$}\label{restrictedBinsProblemIdentities} %will need to change section name to avoid confusion with previous section--ask Vedant to do this

There are various identities which involve the variables $M_{n \ell , k}$ and $R_{n, \ell, k}$. These identities help us understand the nature of the variables. The term $R_{n, \ell, k}$ used in the majority of the paper here is also known as Polynomial Coefficients in various papers like \cite{Fah1} and \cite{Fah2}. These papers provide a more rigorous definition and expression for the term $R_{n, \ell, k}$. The paper \cite{Fah1} has also explored the various combinatorial interpretations of the Polynomial Coefficients in different areas of Mathematics.

There are many identities of involving the Polynomial Coefficients mentioned in these papers, but almost every paper has proved them using generating functions and their algebraic manipulations. Here, we present the proof of four identities using only the elementary principles of combinatorics. %For a more comprehensive look into the identities, both the papers cited above are a good choice. 

\begin{lem}[Identities involving $R_{n, \ell, k}$]
Some of the prominent identities involving the variable $R_{n, \ell, k}$ are highlighted below: 

\begin{enumerate}

 \item{    \begin{equation}\label{lem_1}
        R_{n, \ell, k}  =   R_{\ell k - n, \ell, k} 
    \end{equation}}
    
 \item{    \begin{equation}\label{lem_2}
        R_{n, \ell, m + k} = \sum\limits_{i=0}^{n} {R_{i, \ell, m}\;R_{n - i, \ell, k} }
    \end{equation}}
    
%          \item{     \begin{equation}\label{lem_3}
%          R_{n, \ell, k} - R_{n - 1, \ell, k} = R_{n , \ell -1 , k } - $ %          R_{n - k - 1, \ell - 1 ,k} 
%          \end{equation}}
    
 \item{   \begin{equation}\label{lem_4}
        R_{n, \ell + 1, k} = \sum\limits_{i = 0}^{k} R_{n - i, \ell, k}
    \end{equation}}
    
\item{     \begin{equation}\label{lem_5}
        R_{n+1, \ell + 1, k} - R_{n, \ell + 1, k} = R_{n+1, \ell, k} - R_{n - k, \ell, k}
    \end{equation}}

\end{enumerate}
\end{lem}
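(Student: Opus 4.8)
The plan is to work throughout with the combinatorial model underlying the formula \eqref{formulaR_n_ell_k}: I read $R_{n,\ell,k}$ as the number of lattice points $(x_1,\dots,x_\ell)$ with $x_1+\cdots+x_\ell=n$ and $0\le x_i\le k$ for each $i$, i.e.\ weak compositions of $n$ into $\ell$ parts each of size at most $k$. Empty bins are allowed here, consistent with the base count $\binom{n+\ell-1}{\ell-1}$ used to derive \eqref{formulaR_n_ell_k}; this will matter especially for \eqref{lem_1}. Since the authors explicitly wish to avoid generating functions, I would prove each identity by exhibiting a bijection or by conditioning on the contents of a single bin, using the polynomial viewpoint $R_{n,\ell,k}=[x^n]\left(1+x+\cdots+x^k\right)^\ell$ only privately as an algebraic sanity check.

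For \eqref{lem_1} I would use the complementation involution $(x_1,\dots,x_\ell)\mapsto(k-x_1,\dots,k-x_\ell)$. This sends a configuration counted by $R_{n,\ell,k}$ to one with part-sum $\ell k-n$ and the same per-bin bounds $0\le k-x_i\le k$, and being its own inverse it is a bijection onto the configurations counted by $R_{\ell k-n,\ell,k}$. For \eqref{lem_4} I would condition on the number $i$ of balls in the $(\ell+1)$-th bin: since $0\le i\le k$, deleting that bin leaves exactly a configuration counted by $R_{n-i,\ell,k}$, and summing over $i$ gives the claim. Identity \eqref{lem_5} then follows from \eqref{lem_4} by telescoping: applying \eqref{lem_4} to both $R_{n+1,\ell+1,k}$ and $R_{n,\ell+1,k}$ and subtracting, the common terms $R_{n,\ell,k},\dots,R_{n+1-k,\ell,k}$ cancel in pairs, leaving precisely $R_{n+1,\ell,k}-R_{n-k,\ell,k}$.

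The identity \eqref{lem_2} is the one I expect to be the main obstacle. The natural attack is to split the capacity $m+k$ of each bin into a ``lower'' compartment of capacity $m$ and an ``upper'' compartment of capacity $k$, conditioning on the total number $i$ of balls in the lower compartments so that the lower data is counted by $R_{i,\ell,m}$ and the upper data by $R_{n-i,\ell,k}$. The difficulty is that this splitting is not canonical: a single configuration $(x_1,\dots,x_\ell)$ with $0\le x_j\le m+k$ can arise from many compartment pairs $(a_j,b_j)$ with $a_j\le m$, $b_j\le k$, and $a_j+b_j=x_j$, so the naive map is many-to-one and the right-hand side threatens to overcount. Making the argument rigorous therefore requires either pinning down a canonical splitting such as $a_j=\min(x_j,m)$ and confronting the fact that the unrestricted pairs enumerated by the right-hand side are genuinely more numerous, or re-examining whether the clean convolution is the one that distributes the \emph{bins} rather than the \emph{capacity}. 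Before committing to a bijection I would check the statement against the factorization $\left(1+\cdots+x^{m+k}\right)^\ell$ versus $\left(1+\cdots+x^m\right)^\ell\left(1+\cdots+x^k\right)^\ell$, since these polynomials do not agree; this reconciliation is exactly where the real care is needed, whereas \eqref{lem_1}, \eqref{lem_4}, and \eqref{lem_5} are essentially immediate once the lattice-point model is in place.
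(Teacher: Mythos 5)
Your treatments of \eqref{lem_1}, \eqref{lem_4}, and \eqref{lem_5} coincide with the paper's own proofs: the complementation involution $(b_1,\dots,b_\ell)\mapsto(k-b_1,\dots,k-b_\ell)$ for \eqref{lem_1}, conditioning on the contents of one distinguished bin for \eqref{lem_4}, and telescoping two instances of \eqref{lem_4} for \eqref{lem_5}. Those three are correct and need no further comment.

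Your suspicion about \eqref{lem_2} is justified, and you should trust the algebraic sanity check you describe rather than the stated identity. Take $\ell=1$, $m=k=1$, $n=1$: the left side is $R_{1,1,2}=1$, while the right side is $R_{0,1,1}R_{1,1,1}+R_{1,1,1}R_{0,1,1}=2$. So \eqref{lem_2} is false as stated. The paper's own argument is exactly the compartment construction you flag: it splits each bin into a part $A$ of capacity $m$ and a part $B$ of capacity $k$ and multiplies the two counts, but the map from compartment data $(a_j,b_j)$ to the configuration $(a_j+b_j)$ is many-to-one, so the product $R_{i,\ell,m}\,R_{n-i,\ell,k}$ overcounts; equivalently, $(1+\cdots+x^{m+k})^{\ell}\neq(1+\cdots+x^{m})^{\ell}(1+\cdots+x^{k})^{\ell}$. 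The convolution that does hold is the one over bins rather than over capacity, namely $R_{n,\ell_1+\ell_2,k}=\sum_{i=0}^{n}R_{i,\ell_1,k}\,R_{n-i,\ell_2,k}$, proved by conditioning on how many balls land in the first $\ell_1$ bins (this is the Vandermonde-type identity for polynomial coefficients in the cited references). You cannot repair \eqref{lem_2} by fixing a canonical splitting such as $a_j=\min(x_j,m)$, since that only yields the inequality $R_{n,\ell,m+k}\le\sum_{i}R_{i,\ell,m}R_{n-i,\ell,k}$; the statement itself needs to be corrected, not your proof.
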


\begin{proof}
Let us prove all the identities from \eqref{lem_1} to \eqref{lem_5} in a sequential order. 

\begin{enumerate}
    \item {$R_{n, \ell, k}  =   R_{\ell k - n, \ell, k}$ : We will use the principle of one-to-one correspondence between two sets to show that the cardinalities of both sets are equal, which would lead us to the above identity. We know that each configuration of balls in the bins with conditions expressed with the formula of $R_{n, \ell, k}$ can be uniquely denoted as $(b_1, b_2, b_3, \cdots , b_\ell)$, where each $b_i$ denotes the number of balls in the bin numbered $i$ from a selection of bins numbered from $1$ to $\ell$. The sum of all $b_i$'s equals $n$. Furthermore, for each $1 \leq i \leq \ell$, $b_i$ is a non-negative integer which can have a maximum value of $k$. Let us denote 
    %\begin{equation} \label{config_represent}
    $\{b_i\}^{\ell}_{i = 1}$
    %\end{equation}
    to be the same configuration $(b_1, b_2, b_3, \cdots , b_\ell)$, but expressed in a more compact form. 
    %Each $b_i$ is a non-negative integer for an integer $i$ such that $1 \le i %\le \ell$ and all $b_i$'s are restricted with the condition $b_i \le k$.
    % Let us denote $\{b_i\}^{\ell}_{i = 1}$ to be the same configuration %$(b_1, b_2, b_3, \cdots , b_\ell)$, but expressed in a more compact form. 
    
    %   Now, consider the term $R_{\ell k - n, \ell, k}$, where each 
    %   configuration can be denoted as the set $(c_1, c_2, \cdots, c_\ell)$, 
    %   where each term $c_i$ denotes the number of balls in the bin numbered 
    %   $i$ from a selection of bins numbered from $1$ to $\ell$ and sum of all %   $c_i$'s equals $\ell k - n$.
    
    Now, for each configuration of balls in bins denoted by $\{b_i\}^{\ell}_{i = 1}$, we have another unique configuration of the form $\{k - b_i\}^{\ell}_{i = 1}$. Note that this configuration has each term of the form $k - b_i$, which is always non-negative due to the fact that $b_i \le k$. Also note that the maximum value of $k - b_i$ is at most $k$ due to the fact that $b_i \ge 0$. Now, this configuration can be said to be a collection of bins with $b_i$ balls in the $i$th bin; the $\ell$ bins are indexed such that each bin is non-empty and has at most $k$ balls. Also, the sum of the number of balls in each bin over all the bins is 
    \begin{equation}\label{countingLeftoverBalls}
    (k - b_1) + (k - b_2) + \cdots + (k - b_\ell) \ = \ \ell k - (b_1 + b-2 + \cdots + b_\ell) \ = \ \ell k - n.
    \end{equation}
    On the other hand, the total number of such configurations of this type can be expressed with the formula of $R_{k \ell - n, \ell, k}$. This means that there is a one-to-one correspondence between the two sets of configurations satisfying their respective conditions. This implies that the cardinalities of both the sets are equal, which implies that $R_{n, \ell, k} = R_{k \ell - n, \ell, k}$. 
    }
    
    \item { $R_{n, \ell, m + k} = \sum\limits_{i=0}^{n} {R_{i, \ell, m}\;R_{n - i, \ell, k} }$ :  To prove this, we proceed by counting in two ways. Suppose we have $\ell$ bins and $n$ balls such that each bin has at most $(m+k)$ balls. The total number of such configurations, due to the simple definition of the Generalized Bins restriction problem, equals $R_{n, \ell, m + k}$ from the Formula  \eqref{bins_restrict_formula}. 
    
    Let us count the same problem from a different perspective. Let each bin have a partition which divides the bin into two parts (say $A$ and $B$) such that one component of the partition $A$ can contain a maximum of $m$ balls and the other component $B$ can contain a maximum of $k$ balls. Note that this problem is still equivalent to Question \ref{gen_ballsbins}, because in both  problems, each configuration has bins with at most $m+k$ balls. Enumerating all the possible cases via partitions, let us fill all the partition components $A$ for every bin. Let us suppose we require $i$ balls $(0 \le i \le  n)$ to fill each partition component A of every bin such that no bin's partition component labelled $A$ has more than $m$ balls. The number of such configurations is $R_{i, \ell, m}$. Now, we must fill the remaining $n-i$ balls into all the partition components labelled $B$ of every bin such that no bin has more than $k$ balls. By a similar argument, the number of such configurations equals $R_{n-i, \ell, k}$.
    
    %As each partition's components $A$ and $B$ of any bin can be interchanged with another bin (because the restriction property would still be true),
    
    Now, note that for any two bins $B_i$ and $B_j$ , the $B_i$s that partition component $A$ can be interchanged with the partition component $A$ of bin $B_j$, and both the bins will still satisfy the restrictions on the bins and their partition components. Hence, the total number of ways to fill $\ell$ bins with $n$ balls such that every bin has at most $(m + k)$ balls is the product of the number of ways to fill the two partitions of the bins. That is, this describes the expression $R_{i, \ell, m}R_{n-i, \ell, k}$.
    
    %due to the Multiplication Principle
    
    %  Therefore, the number of ways of filling all bins 
    
    Now, $i$ can range from $0$ to $n$ due to the number of balls available, so we should sum the total number of such configurations over all possible values of $i$. This is equal to $\sum\limits_{i=0}^{n} {R_{i, \ell, m}\;R_{n - i, \ell, k}}$. So, we have just counted the same problem in two ways, hence we have the identity
    \begin{equation}
         R_{n, \ell, k} \ = \ \sum\limits_{i=0}^{n} {R_{i, \ell, m}\;R_{n - i, \ell, k}}.
    \end{equation}
    }
    
    \item { $R_{n, \ell + 1, k} = \sum\limits_{i = 0}^{k} R_{n - i, \ell, k}$ : We will use the concept of counting in two ways to prove this identity.

    We first find a way to represent $R_{n, \ell + 1, k}$. Note that for each bin $b_i$ satisfying the restriction,
    we have that $0 \le b_i \le k$ for any integer $i$ such that $1 \le i \le \ell + 1$. Now, let us fill the first bin first with some number of balls, and then fill the rest of the bins accordingly. Now, the minimum number of balls we can put in the first bin is $b_1 = 0$, and the maximum number of balls we can fill into a bin is $b_1 = k$. Let us find the number of ways to fill the rest of the balls into the remaining bins for each of the cases that lie within the given restrictions. 
    
    \underline{Term in the First Case}  ($b_1 = 0$): This means the number of balls in the first bin is $0$, so in order to fill the remaining $n - 0 = n$ balls into $\ell + 1 - 1 = \ell$ bins such that each bin contains at most $k$ balls, the number of such configurations is $R_{n, \ell, k}$ from the Theorem \ref{bins_restrict_formula}.
    
    \underline{Term in the Second Case}  ($b_1 = 1$): This means the first bin has a single ball; therefore, in order to fill the remaining $n - 1$ balls into $\ell$ bins, such that each bin contains at most $k$ balls, the number of such configurations is $R_{n-1, \ell, k}$.
    }
   
   For the remaining cases, i.e where $b_1$ ranges from $1$ to $k$, we proceed similarly as shown above. We now discuss the last case.
    
    \underline{Term in the $(k\ +\ 1)^{\text{th}}$ Case}  ($b_1 = k$): This means the first bin has $k$ balls; therefore, in order to distribute the remaining $n - k$ balls amongst $\ell$ bins, such that each bin contains at most $k$ balls, the number of such configurations is $R_{n-k, \ell, k}$.
    Now, note that summing each of the cases above would account for every available configuration having $n$ balls and $\ell + 1$ bins where each bin contains at most $k$ balls. However, the formula associated with the total number of such configurations is represented by $R_{n, \ell + 1, k}$. Therefore, 
    \begin{equation}\label{R_compiled_sum}
    R_{n, \ell, k} + R_{n - 1, \ell, k} + \cdots + R_{n - k, \ell, k} = R_{n, \ell + 1, k} \Longrightarrow R_{n, \ell + 1, k} = \sum\limits_{i = 0}^{k} R_{n - i, \ell, k}.
    \end{equation}
    \item{$R_{n+1, \ell + 1, k} - R_{n, \ell + 1, k} = R_{n+1, \ell, k} - R_{n - k, \ell, k}$} : We will leverage formula \eqref{lem_4} to prove this identity. Using that formula, we have the following results: 
    \begin{equation}\label{5_one}
    R_{n+1, \ell+1, k} = \sum\limits_{i = 0}^{k} R_{n + 1 - i, \ell, k}
    \end{equation}
    \begin{equation}\label{5_two}
    R_{n, \ell + 1, k} = \sum\limits_{i = 0}^{k} R_{n - i, \ell, k}.
    \end{equation}
    Subtracting the equation \eqref{5_two} from the equation \eqref{5_one} gives us
    \begin{equation}\label{5_two_one_sub}
    R_{n+1, \ell+1, k} - R_{n, \ell + 1, k} = \sum\limits_{i = 0}^{k} R_{n + 1 - i, \ell, k} - \sum\limits_{i = 0}^{k} R_{n - i, \ell, k},
    \end{equation}
    Further, splitting the first term from the Equation \eqref{5_one} and the last term from the Equation \eqref{5_two} upon further simplification gives us the following: 
     \begin{equation}\label{5_three}R_{n+1, \ell+1, k} - R_{n, \ell + 1, k} = R_{n+1, \ell, k} - R_{n - k, \ell, k} + \sum\limits_{i = 1}^{k} R_{n + 1 - i, \ell, k} - \sum\limits_{i = 0}^{k-1} R_{n - i, \ell, k}. 
     \end{equation}
     Now, notice that we may shift the index of the first sum in equation \eqref{5_three}
     \begin{equation}\label{5_four}
      \sum\limits_{i = 1}^{k} R_{n + 1 - i, \ell, k} = \sum\limits_{i = 0}^{k-1} R_{n - i, \ell, k}
     \end{equation}
     Substituting the equation \eqref{5_four} into the equation \eqref{5_three} gives us the required identity \eqref{lem_5}.
\end{enumerate}
\end{proof}
    
    These identities help develop intuition about the combinatorial properties of the Balls in Bins with restriction formula $R_{n, \ell, k}$. We can further apply these properties to sum up the formula $R_{n, \ell, k}$ while keeping two terms between $n, \ell, k$ as a constant, and the third term as a variable. 
    
\subsection{Sums associated with $M_{n, \ell, k}$}\label{sumsM_n_ell_k}%section will need new name to not be confused with earlier sections, can we combine with previous sections and adjust this in the transitions? Could just do subsections of the same larger section. Maybe ask Vedant to do this.

In Section \ref{intro1}, Question \ref{questionballsinbins} asked the number of ways to split $n$ balls into any number of nonempty ordered bins where the most crowded bin has $k$ balls. Note that in this question, the number of bins was not provided. A natural variant of this question would be to pose the same question when the other variables, such as the number of balls and the restriction on the bin, do not have fixed values. We have already discussed the case with a variable number of bins in sections $2$ and $3$, that was denoted by $B_{n, k}$. Now, we try keeping the other factors as variables and find the appropriate sums associated with these terms.  %We provide the proper definition associated with these questions and the formulae in the following part. 

% The first of these questions could be framed in the following way. In the formula associated with restricted balls and bins with restrictions, which is represented by $M_{n, \ell, k}$, suppose we only know the values of $\ell, k$, then, what could be the total number of configurations possible with no fixed value of $n$, while still following the rules given in the original problem? For this, we mention the following definitions. 

\begin{defi}\label{sum_1}
Let $n, \ell \in \N^{+}$. Then, $K_{n, \ell}$ represents the total number of ways to fill $\ell$ non-empty bins with $n$ balls without any restrictions. 
\end{defi}

Note that this is the same as enumerating all the possible configurations of balls and non-empty bins with the restriction on each bin ranging from $0$ to $n - \ell + 1$. Note that technically a restriction of $0$ is not possible unless the number of balls is $0$, but we have accounted for all such cases in the formula of $R_{n, \ell, k}$. Therefore we can write this sum as 
\begin{equation}\label{K_partition_M}
     K_{n, \ell} \ = \ \sum_{i = 1} ^{n - \ell + 1} M_{n, \ell, i}.
\end{equation}
Now, if we represent $M_{n, \ell, k}$ in terms of $R_{n, \ell, k}$, it can be seen that the sum is actually a telescoping sum which cancels all of the terms and leaves only $R_{n - \ell, \ell, n - \ell + 1}$.

\begin{thm}\label{sum_1_thm}
Let $k, \ell \in \N^+$. Then
\begin{equation} \label{KFormula}
K_{n, \ell}\ = \ {{n - 1} \choose {\ell - 1}}.
\end{equation}
\end{thm}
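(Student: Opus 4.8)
The plan is to evaluate $K_{n,\ell}$ through the partition \eqref{K_partition_M}, namely $K_{n,\ell} = \sum_{i=1}^{n-\ell+1} M_{n,\ell,i}$, by rewriting each summand with the $M$-to-$R$ identity from Theorem \ref{generalizedBallsIntoBinsRestrictionsII}. Writing $a_m := R_{n-\ell,\ell,m}$ for brevity, that theorem gives $M_{n,\ell,i} = a_{i-1} - a_{i-2}$, so the sum becomes $\sum_{i=1}^{n-\ell+1}(a_{i-1}-a_{i-2})$, a textbook telescoping sum. This is exactly the route foreshadowed in the paragraph preceding the theorem.

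Carrying out the telescoping, every interior term cancels and only the two boundary terms survive, leaving $a_{n-\ell} - a_{-1} = R_{n-\ell,\ell,n-\ell} - R_{n-\ell,\ell,-1}$. The negative boundary term vanishes: under the convention \eqref{factdef} a capacity of $-1$ admits no configuration of the $n-\ell$ balls, so $R_{n-\ell,\ell,-1}=0$ (concretely, every binomial coefficient in \eqref{formulaR_n_ell_k} is zero for this argument). For the surviving term, the capacity $n-\ell$ equals the total number of balls being distributed, so the constraint ``at most $n-\ell$ per bin'' is vacuous; hence $R_{n-\ell,\ell,n-\ell}$ is simply the number of ways to place $n-\ell$ balls into $\ell$ bins with no restriction, which by stars-and-bars (empty bins allowed) equals $\binom{(n-\ell)+\ell-1}{\ell-1}=\binom{n-1}{\ell-1}$. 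One may also verify this directly from \eqref{formulaR_n_ell_k}: the $t=0$ term is $\binom{n-1}{\ell-1}$, while every $t\ge 1$ term has upper index $n-1-t(n-\ell+1)\le \ell-2<\ell-1$ and therefore vanishes.

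The main point requiring care is justifying the telescoping across the \emph{entire} index range $1\le i\le n-\ell+1$, since Theorem \ref{generalizedBallsIntoBinsRestrictionsII} is stated under the feasibility hypothesis $\ell+k-1\le n\le \ell k$. For small $i$ (precisely when $i<n/\ell$) the upper bound fails and $M_{n,\ell,i}=0$, because $\ell$ bins of capacity $i$ cannot hold $n$ balls. Fortunately, as noted in Remark \ref{genBallsBinsRestEq2Remark}, that hypothesis is never actually invoked in deriving the identity, so the equation $M_{n,\ell,i}=a_{i-1}-a_{i-2}$ holds for every $i$ with both sides vanishing outside the feasible range; this is what licenses the clean cancellation. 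As a consistency check, the conclusion is forced independently: $K_{n,\ell}$ is by definition the count in Question \ref{ballsinbinselem}, whose answer is the classical $\binom{n-1}{\ell-1}$, so the telescoping argument serves chiefly to confirm the mutual consistency of the $M_{n,\ell,k}$ and $R_{n,\ell,k}$ formulae.
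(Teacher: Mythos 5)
Your proof is correct, but it takes a genuinely different route from the paper's. The paper's own proof is the one you relegate to a ``consistency check'' at the end: it observes directly that $K_{n,\ell}$ is the classical stars-and-bars count, places one ball in each of the $\ell$ bins, and distributes the remaining $n-\ell$ balls freely to get $\binom{(n-\ell)+\ell-1}{\ell-1}=\binom{n-1}{\ell-1}$ in one line. You instead carry out the telescoping of $\sum_{i=1}^{n-\ell+1} M_{n,\ell,i}$ via $M_{n,\ell,i}=R_{n-\ell,\ell,i-1}-R_{n-\ell,\ell,i-2}$, which the paper only gestures at in the prose preceding the theorem and never executes. Your version is more work but does more: you correctly justify that the identity from Theorem \ref{generalizedBallsIntoBinsRestrictionsII} degenerates to $0=0-0$ outside the feasible range so the cancellation is legitimate over the full index range, you dispose of the $R_{n-\ell,\ell,-1}$ boundary term, and you evaluate the surviving term both combinatorially and from \eqref{formulaR_n_ell_k}. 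Note also that your bookkeeping is more careful than the paper's prose, which claims the telescoping leaves $R_{n-\ell,\ell,n-\ell+1}$; the sum's last summand is $i=n-\ell+1$, contributing $R_{n-\ell,\ell,n-\ell}$ as you have it (both evaluate to $\binom{n-1}{\ell-1}$ since either capacity constraint is vacuous for $n-\ell$ balls, so nothing is at stake, but your endpoint is the right one). The trade-off is that the paper's argument is elementary and self-contained, while yours depends on the $M$/$R$ machinery and in return certifies its internal consistency.
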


\begin{proof}
Notice that there are no restrictions on how many balls can be in each bin. Also, Definition \ref{sum_1} simply represents the number of ways to fill $n$ balls into $\ell$ non-empty bins, which is already a very well-known problem in combinatorics. To give a perspective for the formula associated with $K_{n, \ell, k}$, we start by filling each bin with one ball. This is because we do not have any empty bins. So, we have $n-\ell$ balls remaining to be distributed into $\ell$ bins, such that each bin gets $0$ or more balls. The number of ways this can be done is $${{(n - \ell) + \ell - 1} \choose{\ell - 1}} =  {{n - 1} \choose {\ell - 1}}$$ from the Stars and Bars argument. 

% Balls into Bins formula.

%\textcolor{red}{(INCLUDE THE GENERAL DEFINITION OF BALLS INTO BINS FORMULA)}.

\end{proof}

The next definition focuses on having a constant number of bins, and a restriction on the maximum number of balls that can be present in the bins. The main task is to enumerate all possibilities of filling these bins with balls while satisfying the given restrictions. 

\begin{defi}\label{sum_2}
Let $k, \ell \in \N^{+}$. Then $N_{\ell, k}$ represents the total number of ways to fill any number of balls into $\ell$ non-empty bins such that the most crowded bin/bins contain exactly $k$ balls. 
\end{defi}

In this case, we are varying the number of balls, whereas the bins and the restriction on the bin remain the same. The minimum number of balls required in the $\ell$ bins such that all the bins are non-empty and atleast $1$ bin contains $k$ balls would be equal to $k + \ell - 1$. The maximum number of balls would be utilized when all bins have $k$ balls, which would be equal to $k\ell$. Therefore, we can write the sum of all such configurations as

\begin{equation}\label{N_partition_M}
     N_{\ell, k} \ = \ \sum_{i\ =\ k + \ell - 1} ^{k\ell} M_{i, \ell, k}.
\end{equation}

\begin{thm}\label{sum_2_thm}
Let $k, \ell \in \N^+$. Then
\begin{equation}\label{N_ell_k_formula}
N_{\ell, k}\ = \ k^\ell\ -\ (k-1)^\ell.
\end{equation}
\end{thm}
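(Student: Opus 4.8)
The plan is to set aside the summation formula \eqref{N_partition_M} momentarily and instead count the configurations described in Definition \ref{sum_2} directly. Every such configuration is uniquely encoded by a tuple $(b_1, b_2, \ldots, b_\ell)$, where $b_i$ records the number of balls in bin $i$; the conditions ``non-empty'' and ``the most crowded bin contains exactly $k$ balls'' translate precisely into $1 \le b_i \le k$ for every $i$ together with $\max_i b_i = k$. Crucially, because the total number of balls is left unconstrained, $N_{\ell, k}$ simply counts all such tuples with no reference to $\sum_i b_i$.

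First I would count, by complementary counting, the set $S$ of tuples with $1 \le b_i \le k$ for each $i$; since the bins are ordered and chosen independently, $|S| = k^\ell$. Next I would count the ``deficient'' tuples $T \subseteq S$ in which no bin is full, i.e. $1 \le b_i \le k-1$ for every $i$; again by independence $|T| = (k-1)^\ell$. The tuples satisfying $\max_i b_i = k$ are exactly those in $S \setminus T$, so $N_{\ell, k} = |S| - |T| = k^\ell - (k-1)^\ell$, which is the claim.

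To reconcile this with the summation in \eqref{N_partition_M}, I would observe that grouping the tuples of $S \setminus T$ according to their total $n = \sum_i b_i$ partitions this set, the block of total $n$ being enumerated by $M_{n, \ell, k}$; the extreme totals $k + \ell - 1$ (one full bin, the remaining $\ell-1$ singletons) and $k\ell$ (all bins full) produce the summation limits. Equivalently, one can verify the formula straight from Theorem \ref{generalizedBallsIntoBinsRestrictionsII}: substituting $M_{i, \ell, k} = R_{i - \ell, \ell, k-1} - R_{i - \ell, \ell, k-2}$ and summing over $i$ gives $\sum_m R_{m, \ell, k-1} - \sum_m R_{m, \ell, k-2}$, where each $\sum_m R_{m, \ell, j} = (j+1)^\ell$ merely counts all tuples with entries in $\{0, 1, \ldots, j\}$; this once more yields $k^\ell - (k-1)^\ell$.

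The only genuine obstacle here is bookkeeping rather than concept: I must confirm that summing $M_{i, \ell, k}$ over $k + \ell - 1 \le i \le k\ell$ enumerates each tuple of $S \setminus T$ exactly once, and, in the $R$-based verification, that extending the summation range downward to the omitted small totals $m < k - 1$ introduces no error. The latter holds because for $m \le k - 2$ no single bin can reach $k-1$ balls, so $R_{m, \ell, k-1} = R_{m, \ell, k-2}$ and those terms cancel in the difference. A final check of the boundary case $k = 1$, where $(k-1)^\ell = 0$ and the unique all-singletons configuration is recovered, confirms the formula.
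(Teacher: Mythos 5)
Your proposal is correct and its core argument — counting all tuples with $1 \le b_i \le k$ to get $k^\ell$, then subtracting the $(k-1)^\ell$ tuples in which no bin attains $k$ — is exactly the complementary-counting argument the paper uses. The additional reconciliation with the summation \eqref{N_partition_M} and the $R$-based cross-check are extra verification beyond what the paper does, but the essential proof is the same.
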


\begin{proof} We will solve this problem by first counting a more general version of the problem, and then subtracting all the cases violating the original conditions. Note that we have these restrictions: 
\begin{enumerate}
    \item The most crowded bin/(s) have exactly $k$ balls.
    \item All bins are non-empty.
\end{enumerate} 
Let us partially fulfill these conditions, and fill all the bins such that all bins are non-empty and the maximum number of balls in the bins are at most $k$. We have $\ell$ bins, and each bin could have anywhere between $1$ to $k$ balls; as there is no restriction on the total number of balls available, the total number of filling $\ell$ bins with this condition is $k^{\ell}$. 
Note that in the above case, we have also counted the configurations which has the most crowded bin with less than $k$ balls. For example, consider the configuration $(k-1, k-1, \cdots, k-1)$ where all $\ell$ bins contain $(k-1)$ balls. Here, the most crowded bins contain $(k-1)$ balls. However, in the original definition, the most crowded bin has exactly $k$ balls. Thus in order to count only those configurations which have the most crowded bin with exactly $k$ balls, we subtract all such configurations whose most crowded bin contains less than $k$ balls from $k^{\ell}$ as shown above. 
The number of ways to fill $\ell$ bins such that the most crowded bin(s) contain at most $(k-1)$ balls is simply $(k-1)^{\ell}$. Subtracting the two desired expressions yields the  result.
\end{proof}

% \begin{defi}\label{sum_3}
% Let $n, k \in \N^{+}$. Then, $B_{n, k}$ represents the total number of ways to fill $n$ balls into any number of non-empty bins with balls such that the most crowded bin/bins contain exactly $k$ balls. 
% \end{defi}
%can comment out the above definition if we do not add a theorem

\section{Interpretation in counting solutions to integer equations}\label{intEqn}

The problem studied in depth through earlier sections of this paper happens to be analogous with a class of problems that have a more robust literature. One elementary problem that frequents itself in mathematics textbooks such as \cite{Bona, Mil, Wil} is to find the number of positive integer solutions to
\begin{equation}\label{intSolutionCanonical}
x_1 + x_2 + \dots + x_{\ell} \ = \ n,
\end{equation}
where $\ell \leq n$. This equation with various choices for the coefficients is closely studied in additive and enumerative combinatorics papers such as \cite{Buk, Mur, Ruz93, Ruz95}. Incidentally, this problem also motivated the development of the ``stars-and-bars" argument we used numerous times throughout the paper. There are many restrictions that can be added to these solutions. One of particular relevance is where we require that the maximum attained value of $\{x_1, x_2, \dots, x_n\}$ is some $k > 0$; this is relevant because finding a formula for $M_{n, \ell, k}$ is clearly equivalent to finding the number of positive integer solutions to \eqref{intSolutionCanonical} with this restriction, and the author is not aware of previously published resources that provide a closed form to the solution of this problem. Among other things, this paper gives us such a closed form when $n \leq 2k$ or when $\ex j \in \N^+$ such that $j < k$ and $n = 2k + j$. Here are two examples where we demonstrate that our formulas agree with the list of solutions to \eqref{intSolutionCanonical} that is generated by inspection.
\begin{example} Using  \eqref{B_{2k, k}FixedBin}, we calculate
\begin{equation}\label{intSolutionEx1A}
M_{8, 5, 4} \ = \ 5{{4 - 1}\choose{5 - 2}} \ = \ 5.
\end{equation}
On the other hand, the list of solutions to \eqref{intSolutionCanonical} when $n = 8, \ell = 5, k = 4$ is
\begin{eqnarray}\label{intSolutionEx1B}
(x_1, x_2, x_3, x_4, x_5) \in \{(4, 1, 1, 1, 1), (1, 4, 1, 1, 1), (1, 1, 4, 1, 1), (1, 1, 1, 4, 1), \\ (1, 1, 1, 1, 4)\nonumber \},
\end{eqnarray}
and there are $5$ total solutions on this list.
\end{example}
\begin{example} Using the third case in \eqref{totalFixedBin}, we calculate
\begin{equation}\label{intSolutionEx2A}
M_{8, 4, 3} \ = \ 4{{3 + 2 - 1}\choose{4 - 2}} - \frac{4^2 - 4}{2}{{2 - 1}\choose{4 - 3}} \ = \ 18.
\end{equation}
On the other hand, the list of solutions to \eqref{intSolutionCanonical} when $n = 8, \ell = 4, k = 3$ is
\begin{eqnarray}\label{intSolutionEx2B}
(x_1, x_2, x_3, x_4) \in \{(3, 3, 1, 1), (3, 1, 3, 1), (3, 1, 1, 3), (1, 3, 3, 1), (1, 3, 1, 3), \\ 
(1, 1, 3, 3), (3, 2, 2, 1), (3, 2, 1, 2), (3, 1, 2, 2), (2, 3, 2, 1),\nonumber \\ 
(2, 3, 1, 2), (1, 3, 2, 2), (2, 2, 3, 1), (2, 1, 3, 2), (1, 2, 3, 2), \nonumber \\
(2, 2, 1, 3), (2, 1, 2, 3), (1, 2, 2, 3) \} \nonumber,
\end{eqnarray}
and there are $18$ total solutions on this list.
\end{example}

\section{Conclusion and future work}\label{conclusion}

We studied two variants of the same problem in this paper. We counted how many ways to split $n$ balls into nonempty, ordered bins so that the most crowded bin has exactly $k$ balls, for certain values of $n$ and $k$. In the first variant, we fixed the total number of bins that were allowed; in the second variant, we permitted any number of bins to be used, as long as all the bins were nonempty. Clearly, the solution to the second problem follows very quickly from the first, but the first problem is of interest in its own right. As we demonstrated in Section \ref{intEqn}, the first problem yields a previously unknown closed form for counting the number of solutions to certain integer equations.

That being said, we have yet to find a closed form for $B_{n, k}$ for all $n, k \in \N^+$. The case where $\ex j < k$ such that $n = 2k + j$ may be most illustrative to how we handle the case where $n = mk + j$ for some $m \geq 3$. Thus, the natural open problem to consider is whether the technique used in Section \ref{B_{2k+j,k}closedFormSection} can be extended to this more general case. Finding a closed form for this general case will completely solve the problem at hand, because whenever $n > k$, there exists $m \in \N^+, 0 \leq j < k$ such that $n = mk + j$. The case where $m = 1$ is the dominant bin case considered in Section \ref{binDomCluster}, and the $m = 2$ case is handled in Section \ref{ballsinbinsnondom}. 

In a previous discussion on public forums, \footnote{ \url{https://math.stackexchange.com/questions/3548108}}, 
people have approximated the distribution of $M_{n, \ell, k}$ to a normal distribution. Here the parameter is $\ell$ (i.e the number of bins); thus it may be worthwhile to solidify this theory.%

There are also other aspects of this problem which could be explored in the future. One of them is exploring the behaviour of the average number of bins, given the number of balls $n$, the restriction on each bin $k$, and the fact that at least 1 bin contains exactly $k$ balls. Basically, we are exploring the properties of $M_{n, \ell, k}$ with $\ell$ as a variable. The average number of bins required in this case would just be the weighted sum of the number of bins required and the number of such configurations divided by the total number of configurations possible for all cases of the number of bins. Using Python libraries such as NumPy and Matplotlib, we explored the nature of the average number of bins with respect to other factors (like number of balls, and the restriction on the bins). Figure \ref{fig:avg_balls} shows the number of configurations for a constant restriction on the bins and an increasing number of balls. Using many simulations of such test cases, it can be safely conjectured that the average number of bins increases if we keep the restriction on bins as a constant value and increase the number of balls. 
 
\begin{figure}%
    \centering
    \subfloat[\centering $n = 10\ ,\ k = 3$]{{\includegraphics[width=5.5cm]{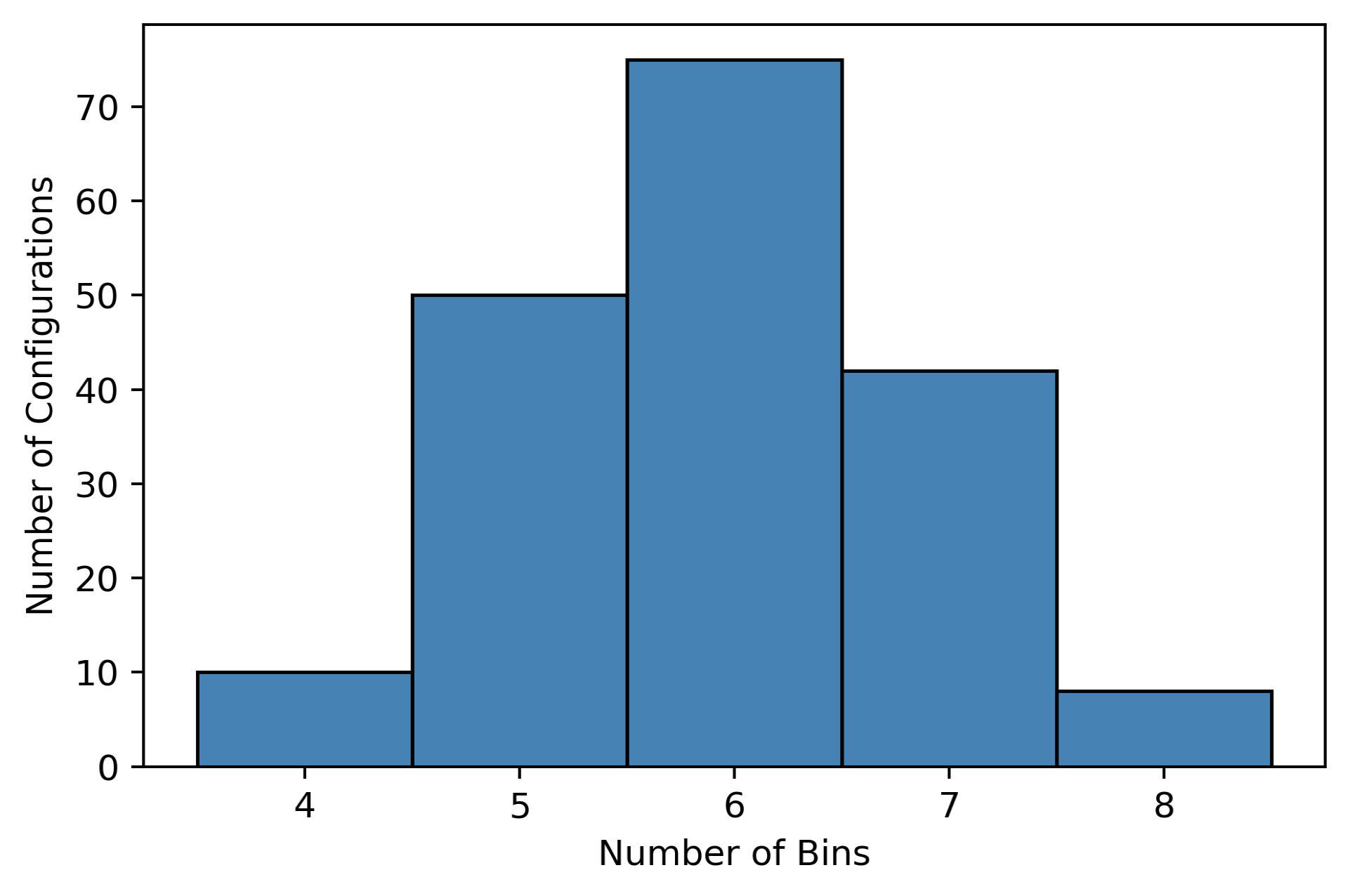} }}%
    \qquad
    \subfloat[\centering $n = 15\ ,\ k = 3$]{{\includegraphics[width=5.5cm]{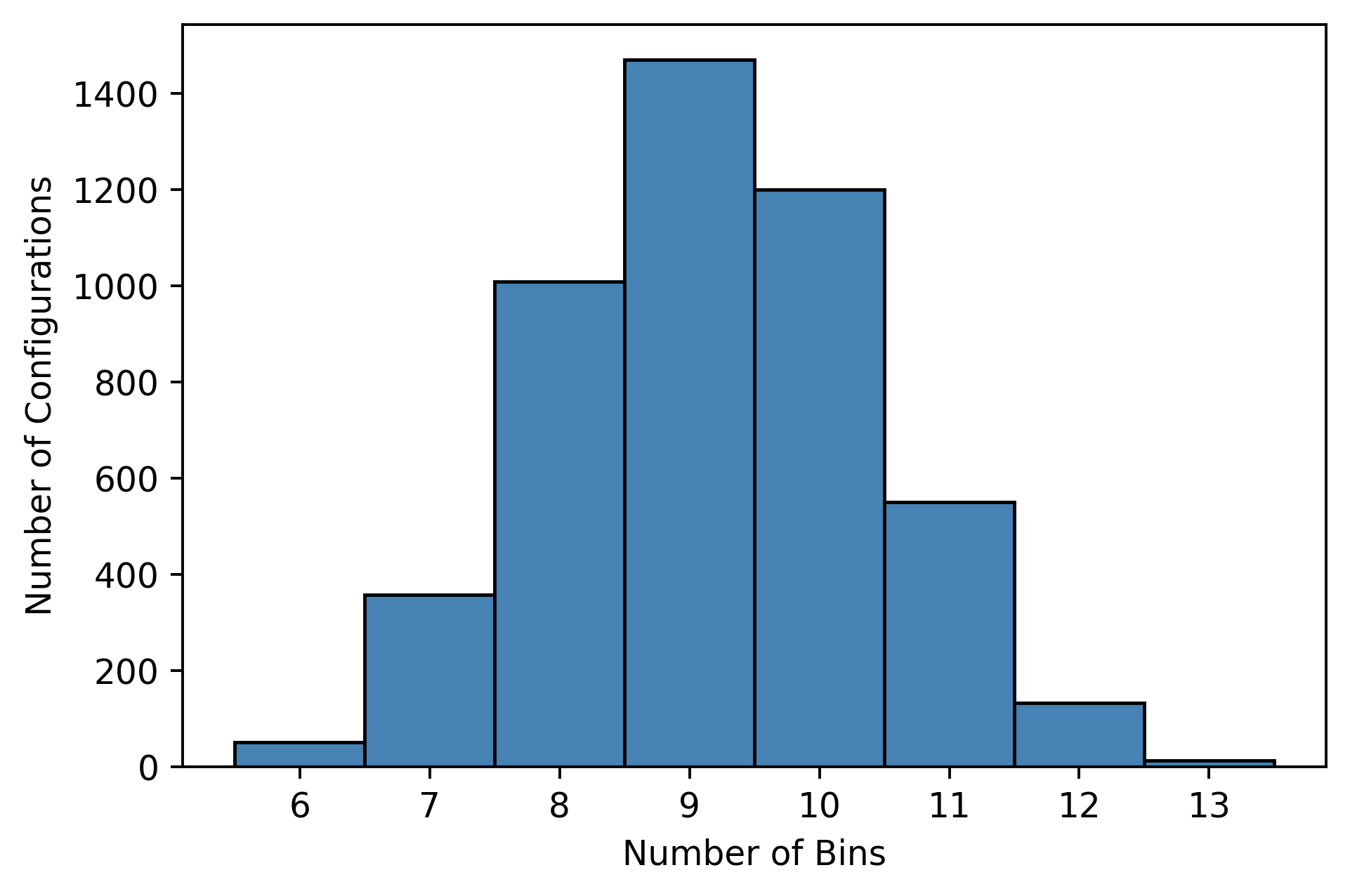} }}%
    \caption{Distribution of the number of configurations with varying number of balls; \\ Our GitHub repository with the code is located at \url{https://github.com/vedantbonde/Balls-in-Bins-Analysis}}%
    \label{fig:avg_balls}%
\end{figure}

Finally, there are some other problems to consider for future research. One variant of the problems studied in this paper is as follows: we can enumerate ways to split $n$ balls into nonempty, ordered bins so that the most crowded bin has exactly $k$ balls, and exactly $t$ bins have this many balls. This is a sensible problem to study because the calculations in Section \ref{B_{2k+j,k}closedFormSection} implicitly broke the problem at hand into the cases $t = 1$ and $t = 2$. There is also an asymptotic problem to consider. For fixed $n$ and $k$, we can consider the list of all configurations where we split $n$ balls into nonempty, ordered bins so the most crowded bin has exactly $k$ balls as a probability space with the parameter being the number of bins used. Then we ask what probability distribution is resembled as $n \rightarrow \infty$ for a fixed $k$. 

\vspace{0.2 cm}

\noindent {\bf Acknowledgements.} We thank our colleague Steven J. Miller and the Polymath Jr. Program for facilitating this collaboration.

\appendix

\section{Some useful binomial coefficient identities}\label{BCID}

In this appendix we will provide proofs of some binomial coefficient identities that are used throughout the paper and have many uses outside the context of this paper. The proofs in this section are very similar in nature to those in the Appendix of \cite{Che}. They may be regarded as well-known but we provide proofs for sake of completeness and for future reference.
\begin{lemma} For any $n \in \N^+,$
\begin{equation}\label{BCIdentitym=1}
\sum^{n}_{k = 0}k{{n}\choose{k}} \ = \ n2^{n - 1}.
\end{equation}
\end{lemma}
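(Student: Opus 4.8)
The plan is to prove this by the algebraic \emph{absorption} (or \emph{committee}) identity, which keeps everything at the level of elementary binomial coefficient manipulation and avoids generating functions entirely, consistent with the philosophy of the rest of the paper. The key observation is the single-term identity
\begin{equation}
k{{n}\choose{k}} \ = \ n{{n - 1}\choose{k - 1}},
\end{equation}
valid for $1 \leq k \leq n$, which follows immediately from expanding both sides with the factorial definition \eqref{factdef} and cancelling. I would state and verify this first, since it is the only nontrivial ingredient.

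Next I would substitute this into the left-hand side of \eqref{BCIdentitym=1}. The $k = 0$ term vanishes (the factor $k$ kills it), so the sum effectively starts at $k = 1$, and after applying the absorption identity the constant $n$ factors out:
\begin{equation}
\sum^{n}_{k = 0}k{{n}\choose{k}} \ = \ \sum^{n}_{k = 1}n{{n - 1}\choose{k - 1}} \ = \ n\sum^{n}_{k = 1}{{n - 1}\choose{k - 1}}.
\end{equation}
Shifting the index via $j = k - 1$ turns the remaining sum into $\sum^{n - 1}_{j = 0}{{n - 1}\choose{j}}$, which is exactly the binomial expansion of $(1 + 1)^{n - 1}$ and therefore equals $2^{n - 1}$. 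Multiplying by the factored $n$ gives $n2^{n - 1}$, as claimed. This last step reuses the same ``$(1+1)^{m}$'' evaluation that the paper invokes repeatedly in Section \ref{B_{2k+j,k}closedFormSection}.

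There is no real obstacle here: every step is routine once the absorption identity is in hand, and the main thing to be careful about is simply bookkeeping the index shift and confirming the $k = 0$ term drops out cleanly. As a sanity check one could instead give the purely combinatorial double-counting argument, counting the number of ways to select a subset of an $n$-element set together with a distinguished element of that subset: summing over subset size $k$ gives $\sum_{k} k{{n}\choose{k}}$, while first choosing the distinguished element ($n$ ways) and then an arbitrary subset of the remaining $n - 1$ elements ($2^{n - 1}$ ways) gives $n2^{n - 1}$. I would present the absorption-identity proof as the primary argument, since it generalizes directly to the $m = 2$ companion identity \eqref{BCIdentitym=2} needed elsewhere in the paper.
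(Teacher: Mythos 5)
Your proof is correct, but it takes a different route from the paper's. The paper disposes of \eqref{BCIdentitym=1} in one line by citing the formula for the mean of a binomial random variable with parameter $p = \tfrac{1}{2}$: since $\sum_{k} k{{n}\choose{k}}\left(\tfrac{1}{2}\right)^{n} = \tfrac{n}{2}$, multiplying through by $2^{n}$ gives the identity. Your argument instead rests on the absorption identity $k{{n}\choose{k}} = n{{n-1}\choose{k-1}}$, an index shift, and the evaluation $\sum_{j}{{n-1}\choose{j}} = 2^{n-1}$, with the combinatorial double count offered as a cross-check. What each buys: the paper's proof is maximally short but outsources the real content to an external probability fact; yours is fully self-contained, uses only the factorial definition \eqref{factdef} and the binomial theorem, and is arguably more in the spirit of the paper's stated preference for elementary binomial-coefficient manipulation. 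One small caveat on your closing remark: the paper's proof of the companion identity \eqref{BCIdentitym=2} does not proceed by iterating absorption but by a rearrangement that re-expresses the $k^{2}$ sum in terms of itself plus the $m=1$ and $m=0$ sums, so while absorption can indeed be pushed to $m=2$ (via $k^{2}{{n}\choose{k}} = n(k-1){{n-1}\choose{k-1}} + n{{n-1}\choose{k-1}}$), that is a different derivation than the one the paper actually gives.
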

\begin{proof}
This follows immediately from using the formula for the mean of a binomial random variable on page 330 of \cite{Mil}, using value of the parameter $p = \frac{1}{2}$.
\end{proof}
\begin{lemma} For any $n \in \N^+$,
\begin{equation}\label{BCIdentitym=2}
\sum^{n}_{k = 0}k^2{{n}\choose{k}} \ = \ n(n + 1)2^{n - 2}.
\end{equation}
\end{lemma}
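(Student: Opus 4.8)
The plan is to mirror the proof of \eqref{BCIdentitym=1} and extract the sum from the second moment of a binomial random variable. Let $X$ be a binomial random variable with parameters $n$ and $p = \frac{1}{2}$, so that $\P(X = k) = \binom{n}{k}2^{-n}$. Recalling that $\E[X] = np$ and $\Var(X) = np(1 - p)$, and using the identity $\E[X^2] = \Var(X) + (\E[X])^2$, I would compute
\[
\E[X^2] \ = \ np(1 - p) + n^2 p^2 \ = \ \frac{n}{4} + \frac{n^2}{4} \ = \ \frac{n(n + 1)}{4}
\]
upon substituting $p = \frac{1}{2}$. Since $\E[X^2] = \sum_{k = 0}^{n} k^2\binom{n}{k}2^{-n}$ directly from the definition of expectation, multiplying both sides by $2^n$ yields $\sum_{k = 0}^{n} k^2\binom{n}{k} = n(n + 1)2^{n - 2}$, which is exactly the claimed identity.

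A purely combinatorial alternative, should one wish to avoid probabilistic language, is to apply the absorption identity $k\binom{n}{k} = n\binom{n - 1}{k - 1}$ twice. First write $k^2\binom{n}{k} = n\,k\binom{n - 1}{k - 1}$, then split $k = (k - 1) + 1$ so that
\begin{align*}
k^2\binom{n}{k} \ &= \ n(k - 1)\binom{n - 1}{k - 1} + n\binom{n - 1}{k - 1} \\
&= \ n(n - 1)\binom{n - 2}{k - 2} + n\binom{n - 1}{k - 1},
\end{align*}
where the second equality uses absorption again on the leading summand. Summing over $k$ and invoking $\sum_{j}\binom{m}{j} = 2^m$ gives $n(n - 1)2^{n - 2} + n2^{n - 1}$, which collapses to $n(n + 1)2^{n - 2}$ after factoring out $n2^{n - 2}$.

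There is no substantial obstacle here, as the identity is routine; the only points requiring care are conceptual rather than computational. In the probabilistic route, one must remember to recover $\E[X^2]$ from the variance \emph{and} the mean, rather than conflating it with $\Var(X)$ alone. In the combinatorial route, the subtlety lies in the two successive index shifts: the terms with $k = 0$ and $k = 1$ contribute nothing after the shifts, since $\binom{n - 2}{k - 2}$ vanishes for $k < 2$ under the convention \eqref{factdef}, so extending each sum over the full range $0 \le k \le n$ introduces no error. I expect the probabilistic version to be the more concise choice given its consistency with the proof of \eqref{BCIdentitym=1} immediately preceding it.
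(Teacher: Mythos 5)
Both of your arguments are correct, and both differ from the route the paper actually takes. The paper's proof rewrites $k^2\binom{n}{k}$ as $k(n-k+1)\binom{n}{k-1}$, shifts the index, expands into three sums, and arrives at an identity in which the target sum reappears with a negative sign, so that one solves a linear equation $2\sum_k k^2\binom{n}{k} = (n-1)\,n2^{n-1} + n2^n$ for the answer. Your probabilistic argument is in fact more faithful to the spirit of the paper's proof of \eqref{BCIdentitym=1}, which simply quotes the mean of a $\mathrm{Bin}(n,\tfrac12)$ random variable; extending that to the second moment via $\E[X^2] = \Var(X) + (\E[X])^2$ is the natural continuation and is shorter than what the paper does. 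Your absorption-identity argument is likewise a clean direct evaluation with no self-reference, and your remark that the $k=0,1$ terms vanish under the convention \eqref{factdef} correctly disposes of the only index-shift subtlety. What the paper's more laborious approach buys is the recursive scheme mentioned in its closing remark: the same rearrange-and-solve trick can in principle be iterated to reduce $\sum_k k^m\binom{n}{k}$ to lower moments, whereas your probabilistic route would then require higher central moments of the binomial and your absorption route would require $m$-fold iterated absorption --- though both of those also generalize, so the advantage is mostly one of exposition rather than substance.
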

\begin{proof} We aim to rewrite the left-hand side of \eqref{BCIdentitym=2} so that we can invoke the formula \eqref{BCIdentitym=1}. To do this we turn the binomial coefficient into ${{n}\choose{k - 1}}$:
\begin{equation}\label{BCIdentitym=2Step1}
\sum^{n}_{k = 0}k^2{{n}\choose{k}} \ = \ \sum^{n}_{k = 0}k^2\frac{n!}{k!(n - k)!} \ = \ \sum^{n}_{k = 0}k(n - k + 1){{n}\choose{k - 1}}.
\end{equation}
We notice this sum's first term vanishes and shift the index of it:
\begin{equation}\label{BCIdentitym=2Step2}
\sum^{n}_{k = 0}k(n - k + 1){{n}\choose{k - 1}} \ = \ \sum^{n}_{k = 1}k(n - k + 1){{n}\choose{k - 1}} \ = \ \sum^{n - 1}_{k = 0}(k + 1)(n - k){{n}\choose{k}}.
\end{equation}
Now we decompose this into three sums:
\begin{equation}\label{BCIdentitym=2Step3}
\sum^{n - 1}_{k = 0}(k + 1)(n - k){{n}\choose{k}} \ = \ -\sum^{n - 1}_{k = 0}k^2{{n}\choose{k}} + (n - 1)\sum^{n - 1}_{k = 0}k{{n}\choose{k}} + n\sum^{n - 1}_{k = 0}{{n}\choose{k}}.
\end{equation}
All three sums look familiar if we include the $k = n$ terms: the first sum is the opposite of the left-hand side of \eqref{BCIdentitym=2}, the second sum is the left-hand side of \eqref{BCIdentitym=1}, and the right-hand side is the binomial expansion of $(1 + 1)^n$. Thus we decide to add the $k = n$ terms and then subtract them:
\begin{multline}\label{BCIdentitym=2Step4}
 -\sum^{n - 1}_{k = 0}k^2{{n}\choose{k}} + (n - 1)\sum^{n - 1}_{k = 0}k{{n}\choose{k}} + n\sum^{n - 1}_{k = 0}{{n}\choose{k}} \ = \ \\
 -\sum^{n}_{k = 0}k^2{{n}\choose{k}} + (n - 1)\sum^{n}_{k = 0}k{{n}\choose{k}} + n\sum^{n}_{k = 0}{{n}\choose{k}} + n^2{{n}\choose{n}} - (n - 1)n{{n}\choose{n}} + n{{n}\choose{n}}.
\end{multline}
It turns out the last three terms on the right-hand side of \eqref{BCIdentitym=2Step4} cancel each other out. In addition, everything in \eqref{BCIdentitym=2Step4} is equal to the left-hand side of \eqref{BCIdentitym=2}, so we can treat those quantities like an equation and rearrange it to obtain
\begin{equation}\label{BCIdentitym=2Step5}
2\sum^{n}_{k = 0}k^2{{n}\choose{k}} = (n - 1)\sum^{n}_{k = 0}k{{n}\choose{k}} + n\sum^{n}_{k = 0}{{n}\choose{k}}.
\end{equation}
The desired result follows from a direct application of \eqref{BCIdentitym=1} and the fact that the rightmost sum in \eqref{BCIdentitym=2Step5} is equal to $2^n$.
\end{proof}

\begin{remark} This process can be repeated recursively to evaluate sums of the form $\sum^{n}_{k = 0}k^m{{n}\choose{k}}$, but in this paper we only need the results for $m = 1$ and $m = 2$. There is no known closed form for this sum for general values of $m$. See \cite{Boy1, Boy2} for more information on past explorations with this family of sums, including a discussion of how Stirling's triangle was developed. \end{remark}

\section{Estimates on enumerating generalized balls into bins with restrictions}\label{genBinRestrictEstimates}

Before obtaining our estimate on Equation \eqref{genBallsBinsRestEq2}, we need one other preliminary lemma. This identity may be regarded as well-known but we provide a quick proof for sake of completeness.

\begin{lemma}\label{genBallsBinsRestLem1} Let $m \in \N^+$. Then
\begin{equation} \label{genBallsBinsRestLem1Eq1}
\sum_{\substack{0 \leq t \leq m \\ t \ \text{even}}}{{m}\choose{t}} \ = \  \sum_{\substack{0 \leq t \leq m \\ t \ \text{odd}}}{{m}\choose{t}} \ = \  2^{m - 1}.   
\end{equation}

\end{lemma}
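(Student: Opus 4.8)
The plan is to evaluate both sums simultaneously by combining two specializations of the Binomial Theorem, namely the expansions of $(1+1)^m$ and $(1-1)^m$. This approach handles both claimed equalities at once and avoids any casework on the parity of $m$.

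First I would record the two standard evaluations. Expanding $(1+1)^m$ gives
\begin{equation}
\sum_{t=0}^{m}{m \choose t} \ = \ 2^m,
\end{equation}
and expanding $(1-1)^m$ gives
\begin{equation}
\sum_{t=0}^{m}(-1)^t{m \choose t} \ = \ 0.
\end{equation}
Next I would add and subtract these two identities. Adding them, every term with $t$ odd cancels while every term with $t$ even doubles, which yields $2\sum_{t \ \text{even}}{m \choose t} = 2^m$; subtracting the second identity from the first instead cancels the even-indexed terms and doubles the odd-indexed ones, which yields $2\sum_{t \ \text{odd}}{m \choose t} = 2^m$. Dividing each of these two equations by $2$ produces the common value $2^{m-1}$ for both sums, which is exactly the desired conclusion.

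The argument is almost entirely routine, so there is no substantial obstacle; the only point requiring a moment's care is the hypothesis $m \geq 1$. This is precisely what guarantees that the second expansion evaluates to $0$: for $m = 0$ the alternating sum would instead equal $1$, and the odd-indexed sum would be empty, so the claimed equality of the two sums would fail. Since the lemma restricts attention to $m \in \N^+$, this degenerate case is excluded and the computation above applies verbatim.
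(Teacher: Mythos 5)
Your proof is correct and follows essentially the same route as the paper: both expand $(1+1)^m$ and $(1-1)^m$ via the Binomial Theorem and then add and subtract the resulting identities to isolate the even- and odd-indexed sums. Your explicit remark on why $m \geq 1$ is needed is a nice touch the paper leaves implicit.
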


\begin{proof} The following two identities follow from applying the Binomial Theorem to expand $(1 + 1)^{m}$ and $(1 - 1)^{m}$, respectively.

\begin{equation} \label{genBallsBinsRestLem1Eq1A}
\sum^{m}_{t = 0}{{m}\choose{t}} \ = \ 2^{m}   
\end{equation}

\begin{equation} \label{genBallsBinsRestLem1Eq1B}
\sum^{m}_{t = 0}(-1)^t{{m}\choose{t}} \ = \ 0.   
\end{equation}

We can add \eqref{genBallsBinsRestLem1Eq1A} and \eqref{genBallsBinsRestLem1Eq1B} together. In doing so, the terms with odd index $t$ cancel out while the terms with even index $t$ double in magnitude. This proves the first equality in \eqref{genBallsBinsRestLem1Eq1}. Subtracting this equality from \eqref{genBallsBinsRestLem1Eq1A} yields the second equality in \eqref{genBallsBinsRestLem1Eq1}, completing the proof.

\end{proof}

\begin{lemma}\label{genBallsBinsRestLem2}
Let $n, \ell, k \in \N^+$ such that $k \leq n \leq \ell k$ and $n \geq 2$. Then we have the following estimates on $M_{n, \ell, k}$:
\begin{multline} \label{genBallsBinsRestLem2Eq2-1}
M_{n, \ell, k} \ \leq \ 2{{\ell}\choose{\al}}{{n - \al k - 1}\choose{\ell - 1}} + 2{{\ell}\choose{\be}}{{n - \be(k - 1) - 1}\choose{\ell - 1}} + \\ %%%%%
\left(\frac{2^{\ell - 1}}{(\ell - 1)!}\cdot \frac{(n - 1)^{n - \frac{1}{2}}e^{1 - \ell + \ell k}e^{\frac{1}{12(n - \ell k - 1)} - \frac{1}{12(n - \ell) + 1}}}{(n - (\al - 1) k - \ell)^{n - \ell k - \ell + \frac{1}{2}}}\right) - \\ %%%%%%
\left(\frac{2^{\ell}}{(\ell - 1)!} \cdot (n - \ell)^{-\ell - 1}e^{1 - \ell}e^{\frac{1}{12n - 11} - \frac{1}{12}}\right) + \\%%%%%% 
\left(\frac{2^{\ell - 1}}{(\ell - 1)!}\cdot \frac{(n - 1)^{n - \frac{1}{2}}e^{1 - \ell + \ell k}e^{\frac{1}{12(n - \ell (k - 1) - 1)} - \frac{1}{12(n - \ell) + 1}}}{(n - (\be - 1)(k - 1) - \ell)^{n - \ell(k - 1) - \ell + \frac{1}{2}}}\right).
\end{multline} 

\begin{multline} \label{genBallsBinsRestLem2Eq2-2}
M_{n, \ell, k} \ \geq \  -2{{\ell}\choose{\al}}{{n - \al k - 1}\choose{\ell - 1}} - 2{{\ell}\choose{\be}}{{n - \be(k - 1) - 1}\choose{\ell - 1}} + \\ %%%%%%%%
\left(\frac{2^{\ell}}{(\ell - 1)!} \cdot (n - \ell)^{-\ell - 1}e^{1 - \ell}e^{\frac{1}{12n - 11} - \frac{1}{12}}\right) - \\ %%%%%%
\left(\frac{2^{\ell - 1}}{(\ell - 1)!}\cdot \frac{(n - 1)^{n - \frac{1}{2}}e^{1 - \ell + \ell k}e^{\frac{1}{12(n - \ell k - 1)} - \frac{1}{12(n - \ell) + 1}}}{(n - (\al - 1) k - \ell)^{n - \ell k - \ell + \frac{1}{2}}}\right) - \\ %%%%%%%
\left(\frac{2^{\ell - 1}}{(\ell - 1)!}\cdot \frac{(n - 1)^{n - \frac{1}{2}}e^{1 - \ell + \ell k}e^{\frac{1}{12(n - \ell (k - 1) - 1)} - \frac{1}{12(n - \ell) + 1}}}{(n - (\be - 1)(k - 1) - \ell)^{n - \ell(k - 1) - \ell + \frac{1}{2}}}\right).
\end{multline}
The constants $\al$ and $\be$ are defined as follows:
\begin{equation} \label{genBallsBinsRestLem2ConstDefEq1}
\al \ := \ \max\{t \in \{0, 1, \dots, \ell\}, n - tk - 1 \geq \ell - 1\}
\end{equation}
\begin{equation} \label{genBallsBinsRestLem2ConstDefEq2}
\be \ := \ \max\{t \in \{0, 1, \dots, \ell\}, n - t(k - 1) - 1 \geq \ell - 1\}.       
\end{equation}
\end{lemma}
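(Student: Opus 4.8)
The plan is to start from the exact identity \eqref{genBallsBinsRestEq2} and degrade it into two-sided estimates by (i) truncating each alternating sum at the last index where the binomial coefficients are nonzero, (ii) peeling off the genuinely oscillating tail and bounding it by its extreme terms, and (iii) replacing the surviving leading binomial coefficients by Stirling approximations with explicit error control. First I would rewrite $M_{n,\ell,k} = S_1 - S_2$, where $S_1 = \sum_{t=0}^{\ell}(-1)^t\binom{\ell}{t}\binom{n-tk-1}{\ell-1}$ and $S_2 = \sum_{t=0}^{\ell}(-1)^t\binom{\ell}{t}\binom{n-t(k-1)-1}{\ell-1}$. By the convention \eqref{factdef}, the summands of $S_1$ vanish once $n-tk-1<\ell-1$, so $S_1$ really runs only up to $t=\alpha$ as defined in \eqref{genBallsBinsRestLem2ConstDefEq1}; likewise $S_2$ runs only up to $t=\beta$ from \eqref{genBallsBinsRestLem2ConstDefEq2}. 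Within each truncated sum the inner binomial coefficient is strictly decreasing in $t$, and this monotonicity is what makes the subsequent bounds possible.

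Next I would separate each truncated alternating sum into its even- and odd-indexed parts and estimate the difference using this monotonicity together with Lemma \ref{genBallsBinsRestLem1}. Replacing the inner binomial coefficient by its largest value (at the smallest surviving index) in the parts that are added, and by its smallest value (at $t=\alpha$, respectively $t=\beta$) in the parts that are subtracted, produces a leading contribution that, after scaling by $\sum_{\text{even}}\binom{\ell}{t}=\sum_{\text{odd}}\binom{\ell}{t}=2^{\ell-1}$, explains the $2^{\ell-1}/(\ell-1)!$ prefactors. The residual oscillation is then controlled by the extreme terms $2\binom{\ell}{\alpha}\binom{n-\alpha k-1}{\ell-1}$ and $2\binom{\ell}{\beta}\binom{n-\beta(k-1)-1}{\ell-1}$, which are exactly the discrete correction terms appearing in \eqref{genBallsBinsRestLem2Eq2-1} and \eqref{genBallsBinsRestLem2Eq2-2}. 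Reversing the roles of the extreme values yields the matching lower estimate, which is why the two displays are mirror images with the signs of the Stirling pieces flipped.

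Finally I would convert the surviving leading binomial coefficients into the explicit exponential expressions by writing each $\binom{N}{\ell-1}=N!/\big((\ell-1)!\,(N-\ell+1)!\big)$ and invoking the sharp two-sided Stirling estimate from \cite{Rob}, namely $\sqrt{2\pi m}\,(m/e)^m e^{1/(12m+1)} < m! < \sqrt{2\pi m}\,(m/e)^m e^{1/(12m)}$. The factors $(n-1)^{n-1/2}$, the powers of the boundary quantities $n-(\alpha-1)k-\ell$ and $n-(\beta-1)(k-1)-\ell$, and the residual exponentials of the form $e^{1/(12(\cdot))-1/(12(\cdot)+1)}$ all emerge from this substitution once the $\sqrt{2\pi m}$ factors are absorbed. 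The main obstacle is precisely this last step: because Stirling's formula is only a two-sided estimate, one must apply the upper bound to the factorials in numerators and the lower bound to those in denominators (and the reverse for the lower estimate), tracking with care which direction each inequality points through the product of three factorials. This asymmetric bookkeeping is what forces the distinct error exponents in \eqref{genBallsBinsRestLem2Eq2-1} versus \eqref{genBallsBinsRestLem2Eq2-2}; none of the individual manipulations is deep, but assembling them so that the final inequalities hold for every $n,\ell,k$ in the stated range is where the real care lies.
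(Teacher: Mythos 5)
Your proposal follows essentially the same route as the paper's proof: truncate the alternating identity \eqref{genBallsBinsRestEq2} at $\al$ and $\be$, peel off the boundary terms $2\binom{\ell}{\al}\binom{n-\al k-1}{\ell-1}$ and $2\binom{\ell}{\be}\binom{n-\be(k-1)-1}{\ell-1}$, split the remaining sums into even- and odd-indexed parts, invoke Lemma \ref{genBallsBinsRestLem1} for the $2^{\ell-1}$ factors, and control the surviving binomial coefficients with the two-sided Stirling estimates of \cite{Fe, Rob}, keeping track of which direction each factorial bound must point. The only real difference is one of ordering: you uniformize over $t$ via monotonicity of the binomial coefficients before applying Stirling, whereas the paper applies Stirling termwise and then bounds the numerator at $t=0$ and the denominator at $t=\al-1$ (resp.\ $t=\be-1$) separately, which is precisely what yields the mixed expressions such as $(n-1)^{n-\frac{1}{2}}/(n-(\al-1)k-\ell)^{n-\ell k-\ell+\frac{1}{2}}$ appearing in the statement.
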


\begin{proof}
The idea behind the proof is as follows: recall that Equation \eqref{genBallsBinsRestEq2}'s summation form is a sum over $t$ ranging from $0$ to $\ell$. Each term in the sum has binomial coefficients ${{n - tk - 1}\choose{\ell - 1}}$ and ${{n - t(k - 1) - 1}\choose{\ell - 1}}$. We will estimate these binomial coefficients from above and below, and then use the alternating nature of the sum \eqref{genBallsBinsRestEq2} to bound each term of the sum appropriately, based on the parity of $t$. An application of \eqref{genBallsBinsRestLem1Eq1} to the resulting bounds will complete the proof.

We will also need to rule out some trivial cases for how $n$, $\ell$, and $k$ relate in order to validate the forthcoming calculations. Recall that $M_{n, \ell, k}$ denotes the number of ways to split $n$ balls into $\ell$ nonempty bins where the most crowded bin has exactly $k$ balls. In this setup, the total number of balls is at most $\ell k$, where all $\ell$ bins are filled to maximum capacity. That is, $n 
\leq \ell k$. On the other hand, at least one bin must be filled to maximum capacity, so $k \leq n$. See Remark \ref{genBallsBinsRestEq2Remark}. 

If $n - k\ell - \ell < 0$ then $n - k\ell - 1 < \ell - 1$, which means the binomial coefficient ${{n - tk - 1}\choose{\ell - 1}}$ vanishes when $t = \ell$. The binomial coefficient estimates we will prove are
\begin{equation} \label{genBallsBinsRestLem2Eq2A}
{{n - tk - 1}\choose{\ell - 1}} \leq \frac{1}{(\ell - 1)!}\cdot \frac{(n - 1)^{n - \frac{1}{2}}}{(n - (\al - 1) k - \ell)^{n - \ell k - \ell + \frac{1}{2}}}e^{1 - \ell + \ell k}e^{\frac{1}{12(n - \ell k - 1)} - \frac{1}{12(n - \ell) + 1}}
\end{equation}
\begin{equation} \label{genBallsBinsRestLem2Eq2B}
 {{n - tk - 1}\choose{\ell - 1}} \ \geq \ \frac{1}{(\ell - 1)!} \cdot (n - \ell)^{-\ell - 1}e^{1 - \ell}e^{\frac{1}{12n - 11} - \frac{1}{12}}
\end{equation}
\begin{multline} \label{genBallsBinsRestLem2Eq2C}
 {{n - t(k - 1) - 1}\choose{\ell - 1}} \ \leq \ \frac{1}{(\ell - 1)!}\cdot \frac{(n - 1)^{n - \frac{1}{2}}}{(n - (\be - 1)(k - 1) - \ell)^{n - \ell(k - 1) - \ell + \frac{1}{2}}} \\
 e^{1 - \ell + \ell k}e^{\frac{1}{12(n - \ell (k - 1) - 1)} - \frac{1}{12(n - \ell) + 1}}
\end{multline}
\begin{equation} \label{genBallsBinsRestLem2Eq2D}
 {{n - t(k - 1) - 1}\choose{\ell - 1}} \ \geq \ \frac{1}{(\ell - 1)!} \cdot (n - \ell)^{-\ell - 1}e^{1 - \ell}e^{\frac{1}{12n - 11} - \frac{1}{12}},
\end{equation}
where $0 \leq t < \al$ in \eqref{genBallsBinsRestLem2Eq2A} and \eqref{genBallsBinsRestLem2Eq2B}, and $0 \leq t < \be$ in \eqref{genBallsBinsRestLem2Eq2C} and \eqref{genBallsBinsRestLem2Eq2D}. Notice that if $t > \al$, then the binomial coefficient ${{n - tk - 1}\choose{\ell - 1}}$ vanishes; if $t > \be$ then the binomial coefficient ${{n - t(k - 1) - 1}\choose{\ell - 1}}$ vanishes. With that in mind, \eqref{genBallsBinsRestEq2} is rewritten by truncating the sum:
\begin{multline} \label{genBallsBinsRestLem2Eq2C}
M_{n, \ell, k} \ = \ (-1)^{\al}{{\ell}\choose{\al}}{{n - \al k - 1}\choose{\ell - 1}} - (-1)^{\be}{{\ell}\choose{\be}}{{n - \be(k - 1) - 1}\choose{\ell - 1}} + \\
\sum^{\al - 1}_{t = 0} (-1)^t{{\ell} \choose{t}}{{n - tk - 1}\choose{\ell - 1}} - \sum^{\be}_{t = 0}(-1)^t{{\ell}\choose{t}}{{n - t(k - 1) - 1}\choose{\ell - 1}}.
\end{multline}
The central Stirling-type estimates we will invoke are
\begin{equation} \label{StirlingEstimateRobFe}
\sqrt{2\pi}m^{m + \frac{1}{2}}e^{-m}e^{\frac{1}{12m + 1}} \leq m! \leq \sqrt{2\pi}m^{m + \frac{1}{2}}e^{-m}e^{\frac{1}{12m}}\end{equation}
These are proven in \cite{Fe, Rob}. There are also similar estimates proven in \cite{Sun}, but those are only relevant when $n$ is large. To obtain the upper bound in \eqref{genBallsBinsRestLem2Eq2A}, use the definition of the binomial coefficient along with both estimates in \eqref{StirlingEstimateRobFe}:
\begin{multline}\label{genBallsBinsRestLem2Eq2D}
{{n - tk - 1}\choose{\ell - 1}} \ = \ \frac{(n - tk - 1)!}{(\ell - 1)!(n - tk - \ell)!} \ \\ \leq \ \frac{1}{(\ell - 1)!} \cdot \frac{(n - tk - 1)^{n - tk - \frac{1}{2}}e^{-n + tk + 1}e^{\frac{1}{12(n - tk - 1)}}}{(n - tk - \ell)^{n - tk - \ell + \frac{1}{2}}e^{-n + tk + \ell}e^{\frac{1}{12(n - tk - \ell) + 1}}}.
\end{multline}
However, the above is only guaranteed to hold if $t < \al$. It can be seen that for any $t < \al$, we have $n - tk - 1 > \ell - 1$. However, if $n - \al k - 1 = \ell - 1$, then the binomial coefficient in question is actually equal to $1$. Rather than break into cases depending on whether $\al$ has this property or not, we simply separate the term where $t = \al$ within our summation formula \eqref{genBallsBinsRestLem2Eq2C}. We use analogous reasoning to justify separating the $t = \be$ term from the second sum in \eqref{genBallsBinsRestLem2Eq2C}. Henceforth in our further estimation of \eqref{genBallsBinsRestLem2Eq2D}, we will have $0 \leq t \leq \al - 1$. As a consequence, $n - tk - \ell > 0$, and we will no longer need to be concerned with inadvertent division by $0$. The goal is to estimate \eqref{genBallsBinsRestLem2Eq2D} as tightly as possible while removing all dependencies on $t$. Our next step in this vein will be to estimate $n - tk - 1 \leq n - 1$ in the numerator and $n - tk - \ell \geq n - (\al - 1) k - \ell$ in the denominator:
\begin{equation}\label{genBallsBinsRestLem2Eq2E}
{{n - tk - 1}\choose{\ell - 1}} \ \leq \ \frac{1}{(\ell - 1)!}\cdot \frac{(n - 1)^{n - tk - \frac{1}{2}}e^{-n + tk + 1}e^{\frac{1}{12(n - tk - 1)}}}{(n - (\al - 1) k - \ell)^{n - tk - \ell + \frac{1}{2}}e^{-n + tk + \ell}e^{\frac{1}{12(n - tk - \ell) + 1}}}.
\end{equation}
Now we estimate exponents in the numerator from above, and exponents in the denominator from below, to obtain
\begin{equation}\label{genBallsBinsRestLem2Eq2F}
{{n - tk - 1}\choose{\ell - 1}} \ \leq \ \frac{1}{(\ell - 1)!}\cdot \frac{(n - 1)^{n - \frac{1}{2}}e^{-n + \ell k + 1}e^{\frac{1}{12(n - \ell k - 1)}}}{(n - (\al - 1) k - \ell)^{n - \ell k - \ell + \frac{1}{2}}e^{-n + \ell}e^{\frac{1}{12(n - \ell) + 1}}}.
\end{equation}
Notice that $n - 1 > 0$ since we assumed $n \geq 2$. Finally, this inequality can be consolidated slightly to obtain \eqref{genBallsBinsRestLem2Eq2A}. This concludes the proof of the upper bound for ${{n - tk - 1}\choose{\ell - 1}}$. Now we will prove the lower bound \eqref{genBallsBinsRestLem2Eq2B} in a similar manner. Use the definition of the binomial coefficient along with both estimates in \eqref{StirlingEstimateRobFe}:
\begin{multline}\label{genBallsBinsRestLem2Eq2G}
{{n - tk - 1}\choose{\ell - 1}} \ = \ \frac{(n - tk - 1)!}{(\ell - 1)!(n - tk - \ell)!} \ \\ \geq \ \frac{1}{(\ell - 1)!} \cdot \frac{(n - tk - 1)^{n - tk - \frac{1}{2}}e^{-n + tk + 1}e^{\frac{1}{12(n - tk - 1) + 1}}}{(n - tk - \ell)^{n - tk - \ell + \frac{1}{2}}e^{-n + tk + \ell}e^{\frac{1}{12(n - tk - \ell)}}}.
\end{multline}
Just as in the proof of the upper bound, we assume $n - tk - \ell > 0$ and use $n - tk - 1 > n - tk - \ell$ to simplify \eqref{genBallsBinsRestLem2Eq2G} into
\begin{equation}\label{genBallsBinsRestLem2Eq2H}
{{n - tk - 1}\choose{\ell - 1}} \ \geq \ \frac{1}{(\ell - 1)!} \cdot (n - tk - \ell)^{-\ell - 1} \cdot \frac{e^{-n + tk + 1}e^{\frac{1}{12(n - tk - 1)+ 1}}}{e^{-n + tk + \ell}e^{\frac{1}{12(n - tk - \ell)}}},
\end{equation}
which in turn is easily consolidated into
\begin{equation}\label{genBallsBinsRestLem2Eq2I}
{{n - tk - 1}\choose{\ell - 1}} \ \geq \ \frac{1}{(\ell - 1)!} \cdot (n - tk - \ell)^{-\ell - 1}e^{1 - \ell} \cdot \frac{e^{\frac{1}{12(n - tk - 1)+ 1}}}{e^{\frac{1}{12(n - tk - \ell)}}}.
\end{equation}
Next, we utilize the inequalities $\frac{1}{12(n - tk - 1) + 1} \geq \frac{1}{12n - 11}$ and $\frac{1}{12(n - tk - \ell)} \leq \frac{1}{12}$ to conclude
\begin{equation}\label{genBallsBinsRestLem2Eq2J}
{{n - tk - 1}\choose{\ell - 1}} \ \geq \ \frac{1}{(\ell - 1)!} \cdot (n - tk - \ell)^{-\ell - 1}e^{1 - \ell}e^{\frac{1}{12n - 11} - \frac{1}{12}}.
\end{equation}
There is one remaining appearance of $t$. Since the exponent $-\ell - 1$ of $n - tk - \ell$ is negative, we will acquire another bound from below by using $n - tk - \ell < n - \ell$. The result \eqref{genBallsBinsRestLem2Eq2B} follows from this. Moreover, the results \eqref{genBallsBinsRestLem2Eq2C} and \eqref{genBallsBinsRestLem2Eq2D} follow from replicating the proofs of \eqref{genBallsBinsRestLem2Eq2A} and \eqref{genBallsBinsRestLem2Eq2B}, but with replacing $k$ with $k - 1$ and $\al$ with $\be$.
Now, we prove the estimate \eqref{genBallsBinsRestLem2Eq2-1}. We rewrite \eqref{genBallsBinsRestLem2Eq2C} by breaking the sums into two sums each: one over even indices, and the other over odd indices. Precisely,
\begin{multline} \label{genBallsBinsRestLem2Eq2K}
M_{n, \ell, k} \ = \ (-1)^{\al}{{\ell}\choose{\al}}{{n - \al k - 1}\choose{\ell - 1}} - (-1)^{\be}{{\ell}\choose{\be}}{{n - \be(k - 1) - 1}\choose{\ell - 1}} + \\
\sum_{\substack{0 \leq t \leq \al - 1 \\ t \ \text{even}}}{{\ell}\choose{t}}{{n - tk - 1}\choose{\ell - 1}} -  \sum_{\substack{0 \leq t \leq \al - 1 \\ t \ \text{odd}}}{{\ell}\choose{t}}{{n - tk - 1}\choose{\ell - 1}} - \\
\sum_{\substack{0 \leq t \leq \be - 1 \\ t \ \text{even}}}{{\ell}\choose{t}}{{n - t(k - 1) - 1}\choose{\ell - 1}} +  \sum_{\substack{0 \leq t \leq \be - 1 \\ t \ \text{odd}}}{{\ell}\choose{t}}{{n - t(k - 1) - 1}\choose{\ell - 1}}. 
\end{multline}
By our choice of $\al$, it is possible that ${{n - \al k - 1}\choose{\ell - 1}}$ equals $1$ or $0$. Similarly, ${{n - \be(k - 1) - 1}\choose{\ell - 1}}$ equals either $1$ or $0$. Since we are handling inequalities, we will assume those terms possess the largest possible magnitudes. Then, we will use the bounds from above \eqref{genBallsBinsRestLem2Eq2A} and \eqref{genBallsBinsRestLem2Eq2C} to estimate the positive terms in the sums from above; we will use the bounds from below \eqref{genBallsBinsRestLem2Eq2B} and \eqref{genBallsBinsRestLem2Eq2D} to estimate the negative terms in the sums from above. 
\begin{multline} \label{genBallsBinsRestLem2Eq2L}
M_{n, \ell, k} \ \leq \ 2{{\ell}\choose{\al}}{{n - \al k - 1}\choose{\ell - 1}} + 2{{\ell}\choose{\be}}{{n - \be(k - 1) - 1}\choose{\ell - 1}} + \\
\sum_{\substack{0 \leq t \leq \ell \\ t \ \text{even}}}{{\ell}\choose{t}}\left(\frac{1}{(\ell - 1)!}\cdot \frac{(n - 1)^{n - \frac{1}{2}}e^{1 - \ell}e^{\frac{1}{12(n - \ell k - 1)} - \frac{1}{12(n - \ell) + 1}}}{(n - (\al - 1) k - \ell)^{n - \ell k - \ell + \frac{1}{2}}}\right) - \\ \sum_{\substack{0 \leq t \leq \ell \\ t \ \text{odd}}}{{\ell}\choose{t}}\left(\frac{1}{(\ell - 1)!} \cdot (n - \ell)^{-\ell - 1}e^{1 - \ell + \ell k}e^{\frac{1}{12n - 11} - \frac{1}{12}}\right) - \\
\sum_{\substack{0 \leq t \leq \ell \\ t \ \text{even}}}{{\ell}\choose{t}}\left(\frac{1}{(\ell - 1)!} \cdot (n - \ell)^{-\ell - 1}e^{1 - \ell + \ell k}e^{\frac{1}{12n - 11} - \frac{1}{12}}\right) + \\ \sum_{\substack{0 \leq t \leq \ell \\ t \ \text{odd}}}{{\ell}\choose{t}}\left(\frac{1}{(\ell - 1)!}\cdot \frac{(n - 1)^{n - \frac{1}{2}}e^{1 - \ell}e^{\frac{1}{12(n - \ell (k - 1) - 1)} - \frac{1}{12(n - \ell) + 1}}}{(n - (\be - 1)(k - 1) - \ell)^{n - \ell(k - 1) - \ell + \frac{1}{2}}}\right).
\end{multline}
Finally, one can interchange the summation with the factors that are independent of $t$, and invoke Lemma \ref{genBallsBinsRestLem2} to conclude the desired upper bound \eqref{genBallsBinsRestLem2Eq2-1}. 

Similarly, we we will use the bounds from below \eqref{genBallsBinsRestLem2Eq2B} and \eqref{genBallsBinsRestLem2Eq2D} to estimate the positive terms in the sums from below; we will use the bounds from above \eqref{genBallsBinsRestLem2Eq2A} and \eqref{genBallsBinsRestLem2Eq2C} to estimate the negative terms in the sums from below. 
\begin{multline} \label{genBallsBinsRestLem2Eq2M}
M_{n, \ell, k} \ \geq \ -2{{\ell}\choose{\al}}{{n - \al k - 1}\choose{\ell - 1}} - 2{{\ell}\choose{\be}}{{n - \be(k - 1) - 1}\choose{\ell - 1}} + \\
\sum_{\substack{0 \leq t \leq \ell \\ t \ \text{even}}}{{\ell}\choose{t}}\left(\frac{1}{(\ell - 1)!} \cdot (n - \ell)^{-\ell - 1}e^{}e^{\frac{1}{12n - 11} - \frac{1}{12}}\right) - \\
\sum_{\substack{0 \leq t \leq \ell \\ t \ \text{odd}}}{{\ell}\choose{t}}\left(\frac{1}{(\ell - 1)!}\cdot \frac{(n - 1)^{n - \frac{1}{2}}}{(n - (\al - 1) k - \ell)^{n - \ell k - \ell + \frac{1}{2}}}e^{1 - \ell + \ell k}e^{\frac{1}{12(n - \ell k - 1)} - \frac{1}{12(n - \ell) + 1}}\right) - \\
\sum_{\substack{0 \leq t \leq \ell \\ t \ \text{even}}}{{\ell}\choose{t}}\left(\frac{1}{(\ell - 1)!}\cdot \frac{(n - 1)^{n - \frac{1}{2}}e^{1 - \ell + \ell k}e^{\frac{1}{12(n - \ell (k - 1) - 1)} - \frac{1}{12(n - \ell) + 1}}}{(n - (\be - 1)(k - 1) - \ell)^{n - \ell(k - 1) - \ell + \frac{1}{2}}}\right) + \\
\sum_{\substack{0 \leq t \leq \ell \\ t \ \text{odd}}}{{\ell}\choose{t}}\left(\frac{1}{(\ell - 1)!} \cdot (n - \ell)^{-\ell - 1}e^{1 - \ell}e^{\frac{1}{12n - 11} - \frac{1}{12}}\right). 
\end{multline}
Just as before, interchange the summation with the factors that are independent of $t$, and invoke Lemma \ref{genBallsBinsRestLem1}. This procedure will prove \eqref{genBallsBinsRestLem2Eq2-2}, completing the entire proof.
\end{proof}

\begin{remark} The last step in the proof of lemma \ref{genBallsBinsRestLem2} could arguably be made more precise if we broke the lemma into cases based on the parity of $\al$ and $\be$. However, doing so would further obscure the desired result while providing virtually no additional insight.
\end{remark}

\end{document}